\newenvironment{enum}{
\begin{enumerate}
  \setlength{\itemsep}{1pt}
  \setlength{\parskip}{0pt}
  \setlength{\parsep}{0pt}
}{\end{enumerate}}
\newcommand{\poi}{\mathrm{Poisson}}
\let\tilde\widetilde
\newcommand{\prior}{\pi}
\newcommand{\tfunc}[2]{T_{#1}(#2)}
\newcommand{\nul}{p_0}
\newcommand{\distP}{P}
\newcommand{\densp}{p}
\newcommand{\ppartition}{\mathcal{P}^{\dagger}}
\newcommand{\dimension}{d}
\newcommand{\nulldens}{\ensuremath{p_0}}
\newcommand{\nulldist}{\ensuremath{P_0}}
\newcommand{\lipcons}{\ensuremath{L_n}}
\newcommand{\coninfty}{\|\psi\|_{\infty}}
\newcommand{\intconst}{c_{\text{int}}}
\newcommand{\lipclass}{\ensuremath{\mathcal{L}}}
\newcommand{\multclass}{\ensuremath{\mathcal{M}}}
\newcommand{\critrad}{\ensuremath{\epsilon}}
\newcommand{\critradn}{\ensuremath{\epsilon_n}}
\newcommand{\tv}[2]{\ensuremath{{\rm TV}(#1,#2)}}
\newcommand{\test}{\ensuremath{\phi}}
\newcommand{\domain}{\ensuremath{\mathcal{X}}}
\newcommand{\risk}{\ensuremath{R_n}}
\newcommand{\genclass}{\ensuremath{\mathcal{C}}}
\newcommand{\epstail}[1]{\mathcal{Q}_{#1}(p_0)}
\newcommand{\tailtest}{\phi_{\text{tail}}}
\newcommand{\maxtest}{\phi_{\text{max}}}
\newcommand{\bulk}[1]{\mathcal{B}_{#1}(p_0)}
\newcommand{\truncnorm}[2]{V_{#1}(#2)}
\newcommand{\trunctest}{\phi_{\text{trunc}}}
\newcommand{\truncstat}{T_{\text{trunc}}}
\newcommand{\altclass}{\mathcal{A}(\epsilon,L)}
\newcommand{\finalcon}{\omega_1}
\newcommand{\finalcontwo}{\omega_2}
\newcommand{\contwo}{8}
\newcommand{\bigN}{\widetilde{N}}
\newcommand{\smallN}{N}
\newcommand{\ellone}{\ell_1}
\newcommand{\elltwo}{\ell_2}
\newcommand{\vparam}{\sigma}
\newcommand{\alt}{\mathcal{A}}
\newcommand{\likrat}{W_n}
\newtheorem{theorem}{Theorem}
\newtheorem{newcounterthree}{New Counter}
\newtheorem{newcountertwo}{New Counter Two}
\newtheorem{lemma}[newcounterthree]{Lemma}
\newtheorem{example}[newcountertwo]{Example}
\begin{document}

\title{}

\begin{center} {\Large{\bf{Hypothesis Testing For Densities and 

\vspace{.1cm}

High-Dimensional Multinomials: Sharp Local Minimax Rates}}}
\\

\vspace*{.3in}

{\large{
\begin{tabular}{ccccc}
Sivaraman Balakrishnan$^\dagger$ && Larry Wasserman$^\dagger$ \\
\end{tabular}

\vspace*{.1in}

\begin{tabular}{ccc}
Department of Statistics$^{\dagger}$ \\
\end{tabular}

\begin{tabular}{c}
Carnegie Mellon University \\
Pittsburgh, PA 15213
\end{tabular}

\vspace*{.2in}

\begin{tabular}{c}
{\texttt{$\{$siva,larry$\}$@stat.cmu.edu}}
\end{tabular}
}}

\vspace*{.2in}

\today
\vspace*{.2in}
\begin{abstract}
We consider the goodness-of-fit testing 
problem
of distinguishing whether the data are drawn from a
specified distribution, versus a composite alternative
separated from the null in the total variation metric. In the discrete case, we
consider goodness-of-fit testing when the null distribution has a
possibly growing or unbounded number of categories.  In the
continuous case, we consider testing a Lipschitz density, with
possibly unbounded support, in the low-smoothness regime where the
Lipschitz parameter is not assumed to be constant.  In contrast to
existing results, we show that the minimax rate and critical
testing radius in these settings depend strongly, and in a precise
way, on the null distribution being tested and this motivates the study of the 
(local) minimax rate as a function of the null distribution. 
For multinomials
the local minimax rate was recently studied in the work of
\citet{valiant14}.
We re-visit and extend their results and
develop two modifications to the $\chi^2$-test whose
performance we characterize.  For testing Lipschitz densities, we
show that the usual binning tests are inadequate in the
low-smoothness regime and we design a spatially adaptive
partitioning scheme that forms the basis for our locally minimax
optimal tests. Furthermore, we provide the first local minimax lower
bounds for this problem which yield a sharp characterization of
the dependence of the critical radius on the null hypothesis being
tested. In the low-smoothness regime we also provide adaptive tests,
that adapt to the unknown smoothness parameter. We illustrate our results with a variety of
simulations that demonstrate the practical utility of our proposed
tests.
\end{abstract}
\end{center}

\section{Introduction}



Hypothesis testing is one of the pillars of modern mathematical statistics with a vast array of scientific applications. There is a well-developed theory of 
hypothesis testing starting with the work of \citet{neyman33}, and their framework plays a central role in the theory and practice of statistics.
In this paper we re-visit the classical goodness-of-fit testing problem of distinguishing the hypotheses:
\begin{align}
\label{eqn:test_main}
H_0: Z_1,\ldots, Z_n \sim P_0\ \ \ {\rm versus}\ \ \ 
H_1: Z_1,\ldots, Z_n \sim P \in \alt
\end{align}
for some set of distributions $\alt$.
This fundamental
problem has been widely studied (see for instance \cite{lehmann06} and
references therein). 

A natural choice of the composite
alternative, one that has a clear probabilistic interpretation, excludes a total variation
neighborhood around the null, i.e.  
we take
$\alt = \{P: \tv{P}{P_0}\geq \epsilon/2\}$. 
This is equivalent to
$\alt = \{P: \|P-P_0\|_1 \geq \epsilon\}$, and we use this representation in the rest of this paper.
However, there exist no consistent tests that can distinguish an arbitrary distribution $P_0$ from alternatives separated in $\ell_1$;
see \cite{lecam73,barron89}.
Hence,
we impose structural restrictions on $P_0$ and $\alt$.
We focus on two
cases: 
\begin{enum}
\item {\bf Multinomial testing: } When the null and alternate distributions are multinomials.
\item {\bf Lipschitz testing: } When the null and alternate distributions have Lipschitz densities.
\end{enum}
The problem of goodness-of-fit testing for multinomials
has a rich history in statistics and popular 
approaches are based on the
$\chi^2$-test \cite{pearson00} or the likelihood ratio test
\cite{wilks38,casella02,neyman33};
see, for instance,
\cite{fienberg79,morris75,diaconis06,paninski08,read88} and references therein.
Motivated by connections to property testing \citep{ron08}, there is also a recent literature developing in computer
science; see 
\cite{goldreich11,valiant14,batu01,diakon16}. 
Testing
Lipschitz densities is one of the basic non-parametric hypothesis
testing problems and tests are often based on the Kolmogorov-Smirnov or
Cram\'{e}r-von Mises statistics \cite{smirnov39,cramer28,von28}.
This problem 
was originally studied from the minimax perspective in the work of
Ingster \cite{ingster94,ingster03}.
See \cite{ingster03,gine15,ariascastro16}
for further references.

In the goodness-of-fit testing problem in~\eqref{eqn:test_main}, previous results
use the (global) critical radius as a benchmark.
Roughly, this global critical radius is a measure of the minimal separation between the null 
and alternate hypotheses that ensures distinguishability,
as the null hypothesis is varied over a large class of distributions (for instance over the class of distributions with Lipschitz densities or over the class of all multinomials on $d$ categories). 
Remarkably, 
as shown in the work of \citet{valiant14} for the case of multinomials and as we show in this paper for the case of Lipschitz densities, there is considerable heterogeneity in the critical radius as a function 
of the null distribution $P_0$. In other words, even within the class of Lipschitz densities, testing certain null hypotheses can be much easier than testing others. Consequently, the \emph{local minimax rate} which describes the critical radius for each individual null distribution provides a much more nuanced picture. In this paper, 
we provide (near) matching upper and lower bounds on the critical radii for Lipschitz testing as a function of the null distribution, i.e. we precisely upper and lower bound the critical radius for each individual Lipschitz null hypothesis. Our upper bounds are based on $\chi^2$-type tests, performed 
on a carefully chosen spatially adaptive binning, and highlight the fact that the standard prescriptions of choosing bins with a fixed width \cite{snedecor80} can yield sub-optimal tests. 

The distinction between local and global perspectives is reminiscent
of similar effects that arise in some 
estimation problems, for instance
in shape-constrained inference
\cite{caisinica}, in constrained least-squares problems \cite{chatterjee14} and
in classical Fisher Information-Cram\'{e}r-Rao bounds \cite{lehmann98}.

The remainder of this paper is organized as follows.
In Section~\ref{sec:background} we provide some background on the minimax
perspective on hypothesis testing, and formally describe the local and
global minimax rates. We provide a detailed discussion of the problem
of study and finally provide an overview
of our main results. In Section~\ref{sec:multinomial} we review the results of 
\cite{valiant14} and present a
new globally-minimax test for testing multinomials, as well as a
(nearly) locally-minimax test.  In Section~\ref{sec:lipschitz} we
consider the problem of testing a Lipschitz density against a total
variation neighbourhood.
We present the body of our main technical result in Section~\ref{sec:lipproof}
and defer technical aspects of this proof 
to the Appendix.
In each
of Section~\ref{sec:multinomial} and~\ref{sec:lipschitz} we present
simulation results that demonstrate the superiority of the tests we
propose and their potential practical applicability.  
In the Appendix, we also present several other results
including a brief study of limiting distributions of the test
statistics under the null, as well as tests that are adaptive to
various parameters.


\section{Background and Problem Setup}
\label{sec:background}

We begin with some basic background on hypothesis testing, the testing
risk and minimax rates, before providing a detailed treatment of some
related work.

\subsection{Hypothesis testing and minimax rates}
Our focus in this paper is on the one sample goodness-of-fit testing
problem. We observe samples $Z_1,\ldots, Z_n \in \domain$, where
$\domain \subset \mathbb{R}^{\dimension},$ which are independent and
identically distributed with distribution $\distP$.
In this context, for a fixed distribution $\nulldist$, we want to test the hypotheses:
\begin{align}
  \label{eqn:test}
\begin{split}
H_0&: \distP = \nulldist \ \ \ {\rm versus} \\
H_1&: \|P - P_0\|_1 \geq \critradn.
\end{split}
\end{align}
Throughout this paper we use $P_0$ to denote the null distribution and $P$ to denote an arbitrary alternate distribution.
Throughout the paper, we use
the total variation distance (or equivalently the $\ell_1$ distance) 
between two distributions $P$ and $Q$,
defined by
\begin{equation}
{\rm TV}(P,Q) =  \sup_A |P(A) - Q(A)|
\end{equation}
where the supremum is over all measurable sets. If $P$ and $Q$ have densities $p$ and $q$ with respect to
a common dominating measure $\nu$, then
\begin{equation}
{\rm TV}(P,Q) =  \frac{1}{2}\int |p-q| d\nu = \frac{1}{2}\|p-q\|_1 \equiv \frac{1}{2}\|P-Q\|_1.
\end{equation}
We consider
the total variation distance because it has a clear probabilistic meaning
and because it is invariant under one-to-one transformations 
\cite{devroye85}.
The $\elltwo$ metric is often easier to work with but
in the context of distribution testing its interpretation is less intuitive.
Of course, other metrics (for instance Hellinger, $\chi^2$ or Kullback-Leibler) 
can be used as well but
we focus on TV (or $\ellone$) throughout this paper.
It is well-understood \citep{barron89,lecam73} that without further restrictions
there are no uniformly consistent tests for distinguishing these hypotheses.
Consequently, we focus on two restricted variants of this problem:
\begin{enumerate}
\item Multinomial testing: In the multinomial testing problem, the domain
of the distributions is $\domain = \{1,\ldots,d\}$ 
and the distributions $\nulldist$ and $\distP$
are equivalently characterized by vectors 
$\nulldens, \densp \in \mathbb{R}^d$. Formally, we define,
\begin{align*}
\multclass = \Big\{\densp: p \in \mathbb{R}^d, 
\sum_{i=1}^d p_i = 1, \ p_i \geq 0~~\forall~i \in \{1,\ldots,d\} \Big\},
\end{align*}
and consider the multinomial testing problem of distinguishing:
\begin{align}
\label{test:mult}
H_0: \distP = \nulldist, \nulldist \in \multclass~~~~{\rm versus}~~~~
H_1: \|P - P_0\|_1 \geq \critradn, P \in \multclass.
\end{align}
In contrast to
classical ``fixed-cells'' asymptotic theory \cite{read88}, we focus on
high-dimensional multinomials where $\dimension$ can grow with, and
potentially exceed the sample size $n$.

\item Lipschitz testing: In the Lipschitz density testing problem
the set $\domain \subset \mathbb{R}^d$, and we restrict our
attention to distributions with Lipschitz densities, i.e. letting
$\nulldens$ and $\densp$ denote the densities of $\nulldist$ and
$\distP$ with respect to the Lebesgue measure, we consider the set
of densities:
\begin{align*}
\lipclass(\lipcons) = 
\Bigg\{\densp: \int_{\domain} p(x) dx = 1, p(x) \geq 0~~\forall~x, 
| \densp(x) - \densp(y)| \leq \lipcons \|x - y\|_2~~\forall~x,y \in \mathbb{R}^d \Bigg\},
\end{align*}
and consider the Lipschitz testing problem of distinguishing:
\begin{align}
\label{test:lip}
H_0: \distP = \nulldist, \nulldist \in \lipclass(\lipcons)~~~~{\rm versus}~~~~
H_1: \|P - P_0\|_1 \geq \critradn, P \in \lipclass(\lipcons).
\end{align}
We emphasize, that unlike prior work \citep{ingster94,ariascastro16,gine15} we do
not require $p_0$ to be uniform. We also do  
not restrict the domain of the densities and we consider the
low-smoothness regime where the Lipschitz parameter $\lipcons$ is
allowed to grow with the sample size.
\end{enumerate}

\vspace{0.3cm}

\noindent {\bf Hypothesis testing and risk.}
Returning to the setting
described in~\eqref{eqn:test}, we define a test $\test$ as a
Borel measurable map, $\test: \domain^n \mapsto \{0,1\}.$ For a fixed
null distribution $\nulldist$, 
we define the set of level $\alpha$ tests:
\begin{equation}
\Phi_{n,\alpha} = \Bigl\{ \phi:\ P_0^n(\phi=1)\leq \alpha\Bigr\}.
\end{equation}
The worst-case risk (type II error) of a test $\phi$
over a restricted class $\genclass$ which contains $\nulldist$
is
\begin{align*}
\risk(\test; \nulldist, \critradn, \genclass) = 
\sup \Big\{ \mathbb{E}_{\distP} [1 - \test]: \|\distP - \nulldist\|_1 \geq \critradn, \distP \in \genclass\Big\}.
\end{align*} 
The local minimax risk is\footnote{Although our proofs are explicit in their dependence on $\alpha$, we suppress 
this dependence in our notation and in our main results treating $\alpha$ as a fixed strictly positive universal constant. }:
\begin{align}
\label{eqn:risk}
\risk(\nulldist, \critradn, \genclass) = 
\inf_{\test\in \Phi_{n,\alpha}} \risk(\test; \nulldist, \critradn, \genclass).
\end{align}
It is common to 
study the minimax risk via a coarse lens by studying instead
the critical radius or the minimax
separation. The critical radius is the smallest value $\critradn$ for which a hypothesis
test has non-trivial power to distinguish $\nulldist$ from the set of
alternatives.  Formally, we define the local critical radius as:
\begin{align}
\label{eqn:critradn}
\critradn(\nulldist, \genclass) = \inf \Big\{\critrad: \risk(\nulldist, \critradn, \genclass) \leq 1/2 \Big\}.
\end{align}
The constant 1/2 is arbitrary; we could use any number in $(0,1-\alpha)$.

The local minimax risk and critical radius depend on the
null distribution $\nulldist$. 
A more common quantity of interest is 
the \emph{global} minimax risk
\begin{equation}
\label{eqn:globalcritradn}
\risk(\critradn, \genclass) = \sup_{\nulldist \in \genclass} \risk(\nulldist, \critradn, \genclass).
\end{equation}
The corresponding global critical radius is
\begin{equation}
\critradn(\genclass) = \inf \Big\{\critradn: \risk(\critradn, \genclass) \leq 1/2 \Big\}.
\end{equation}

In typical non-parametric problems, the local minimax risk  and the global minimax risk match up to
constants and this has led researchers in past work
to focus on the global minimax risk.
We show that for the distribution testing problems we consider,
the local critical radius in \eqref{eqn:critradn} can
vary considerably as a function of the null distribution
$\nulldist$. As a result, the global critical radius, provides only a
partial understanding of the intrinsic difficulty of this family
of hypothesis testing problems. In this paper, we focus on producing
tight bounds on the local minimax separation. These bounds yield as a simple
corollary, sharp bounds on the global minimax separation, but are in general
considerably more refined.

\vspace{0.3cm}

\noindent {\bf Poissonization: } 
In constructing upper bounds on the minimax risk---we work under a
simplifying assumption that the sample size is random:
$n_0 \sim \poi(n)$. 
This assumption is standard in the literature
\citep{valiant14,ariascastro16}, and simplifies several
calculations. When the sample size is chosen to be distributed as
$\poi(n)$, it is straightforward to verify that for any fixed set $A,B
\subset \mathcal{X}$ with $A \cap B = \emptyset$, under $\distP$ the
number of samples falling in $A$ and $B$ are distributed independently
as $\poi(n \distP(A))$ and $\poi(n \distP(B))$ respectively.

In the Poissonized setting, we consider the averaged minimax risk,
where we additionally average the risk in~\eqref{eqn:risk}
over the random sample size. The Poisson distribution is tightly
concentrated around its mean and this additional averaging only
affects constant factors in the minimax risk and we ignore this averaging in the
rest of the paper.

\subsection{Overview of our results}
With the basic framework in place we now provide a high-level overview of the main results of this paper.
In the context of testing multinomials, the results of \cite{valiant14} characterize the local and global minimax rates. We provide the following additional results:
\begin{itemize}
\item In Theorem~\ref{thm:truncchisq} we characterize a simple and practical globally minimax test. In Theorem~\ref{thm:max} building on the results of \cite{diakon16} we provide a simple (near) locally minimax test. 
\end{itemize}

In the context of testing Lipschitz densities 
we make advances over classical results \cite{ingster03,gine15}
by eliminating several unnecessary assumptions (uniform null, bounded support, fixed Lipschitz parameter). We provide the first characterization of the local minimax rate for this problem.
In studying the Lipschitz testing problem in its full generality we find that the critical testing radius 
can exhibit a wide range of possible behaviours, based roughly on the tail behaviour of the null hypothesis.

\begin{itemize}
\item In Theorem~\ref{thm:main} we provide a characterization of the local minimax rate for Lipschitz density testing. In Section~\ref{sec:ex}, we consider a variety of concrete examples that demonstrate the rich scaling behaviour exhibited by the critical radius in this problem.
\item Our upper and lower bounds are based on a novel spatially 
adaptive partitioning scheme. We describe this scheme and derive some of its useful properties in Section~\ref{sec:part}.
\end{itemize}

In the Supplementary Material we provide the technical details of the proofs. 
We briefly consider the limiting behaviour of our test statistics under the null in Appendix~\ref{app:limit}.
Our results show that the critical radius is determined by a certain functional of the null hypothesis. In Appendix~\ref{app:tfunc} we study certain important properties of this functional pertaining to its stability.
Finally, we study tests which are adaptive to various parameters in Appendix~\ref{app:adapt}.

\section{Testing high-dimensional multinomials}
\label{sec:multinomial} 

Given a sample $Z_1,\ldots, Z_n \sim P$
define the counts
$X=(X_1,\ldots, X_d)$ 
where $X_j = \sum_{i=1}^n I(Z_i =j)$.
The local minimax critical radii for the multinomial problem
have been found in 
\citet{valiant14}.
We begin by summarizing these results.

Without loss of generality 
we assume that the entries of the null multinomial $p_0$ 
are sorted so that $p_0(1) \geq p_0(2) \geq \ldots 
\geq p_0(d)$. For any $0 \leq \vparam \leq 1$ we denote $\vparam$-tail of the
multinomial by:
\begin{align}
\label{eqn:taildef}
\epstail{\vparam} = \left\{i: \sum_{j = i}^d p_0(j) \leq \vparam \right\}.
\end{align}
The $\vparam$-bulk is defined to be
\begin{align}
\label{eqn:bulkdef}
\bulk{\vparam} = \{i>1:\ i\notin \epstail{\vparam}\}.
\end{align}
Note that $i=1$ is excluded from the $\vparam$-bulk.
The minimax rate depends on the functional:
\begin{align}
\label{eqn:vfunc}
\truncnorm{\vparam}{p_0} = 
\left( \sum_{i \in \bulk{\vparam}} p_0(i)^{2/3} \right)^{3/2}.
\end{align}
For a given multinomial $p_0$, our goal is to upper and lower bound the 
local 
critical
radius $\critradn(\nulldens, \multclass)$ in \eqref{eqn:critradn}. 
We define, $\ell_n$ and $u_n$ to be the solutions to the equations \footnote{These equations always have a unique solution since the right hand side monotonically decreases to $0$ as the left hand side monotonically increases from 0 to 1.}:
\begin{align}
\label{eqn:multrad}
\ell_n(p_0) = \max\left\{ \frac{1}{n}, \sqrt{\frac{ \truncnorm{\ell_n(p_0)}{p_0}}{n}} \right\},\ \ \ 
u_n(p_0) = \max\left\{\frac{1}{n}, \sqrt{\frac{ \truncnorm{u_n(p_0)/16}{p_0}}{n}}\right\}.
\end{align}
With these definitions in place, we are now ready to state the result of \cite{valiant14}. We use $c_1,c_2,C_1,C_2 > 0$ to denote positive universal constants. 
\begin{theorem}[\cite{valiant14}]
\label{thm:val-val}
The local critical radius $\critradn(\nulldens,\multclass)$ 
for multinomial testing is upper and lower bounded as:
\begin{align}
\label{eqn:minimaxcritrad}
c_1 \ell_n(p_0) \leq \critradn(\nulldens,\multclass) \leq C_1 u_n(p_0).
\end{align}
Furthermore, the global critical radius $\critradn(\multclass)$ is bounded as:
\begin{align*}
\frac{c_2 d^{1/4}}{\sqrt{n}} \leq \critradn(\multclass) \leq  \frac{C_2 d^{1/4}}{\sqrt{n}}.
\end{align*}
\end{theorem}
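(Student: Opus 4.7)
The plan is to prove both the local and global bounds via a matching test/construction pair, relying on Poissonization throughout so that counts $X_i \sim \mathrm{Poisson}(np_0(i))$ are independent under the null. The structure of the proof splits into (i) an upper bound via a bulk/tail decomposition plus a weighted $\chi^2$-type statistic, (ii) a lower bound via Le Cam's method on a randomly-signed perturbation of $p_0$, and (iii) optimization of $V_\sigma(p_0)$ over $p_0 \in \multclass$ to extract the $d^{1/4}/\sqrt{n}$ global rate.

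For the upper bound, first partition indices into the bulk $B = \bulk{u_n(p_0)/16}$ and tail $T = \epstail{u_n(p_0)/16}$. Run two tests at level $\alpha/2$. The tail test fires if $|\sum_{i\in T}(X_i - np_0(i))|$ exceeds a Poisson-concentration threshold on the order of $\sqrt{n\, u_n/16}$; under any alternative that places more than $\epsilon/2$ of its $\ell_1$ deviation on the tail, this test has power by Chebyshev. The bulk test uses the truncated statistic
\begin{equation*}
T_{\text{bulk}} = \sum_{i \in B} \frac{(X_i - np_0(i))^2 - X_i}{p_0(i)^{2/3}},
\end{equation*}
which under Poissonization has mean zero under $H_0$ and mean $n^2 \sum_{i\in B} (p(i)-p_0(i))^2/p_0(i)^{2/3}$ under $H_1$. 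A direct Poisson-moment computation gives $\mathrm{Var}_{H_0}(T_{\text{bulk}}) \lesssim n^2 \sum_{i\in B} p_0(i)^{2/3} = n^2 V_{u_n/16}(p_0)$, and a similar bound under $H_1$ up to lower-order terms. Cauchy--Schwarz on $\|p-p_0\|_1$ restricted to $B$ then shows that whenever $\epsilon \geq C_1 u_n(p_0)$ the alternative mean dominates the standard deviation, giving power $\geq 1/2$.

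For the lower bound, I apply Le Cam's two-point method. Let $\sigma = \ell_n(p_0)$ and define on each $i \in \bulk{\sigma}$ a signed perturbation $p_\eta(i) = p_0(i) + \eta_i \delta_i$ with $\eta_i \in \{\pm 1\}$ i.i.d. uniform and
\begin{equation*}
\delta_i = c\,\epsilon\,\frac{p_0(i)^{2/3}}{\sum_{j \in \bulk{\sigma}} p_0(j)^{2/3}},
\end{equation*}
so $\sum_i |\delta_i| = c\epsilon$ and $p_\eta$ is a valid $\epsilon$-separated alternative (indices in $i=1$ and the tail are left fixed and absorb small adjustments). In the Poissonized model the mixture $Q = \mathbb{E}_\eta P_\eta^n$ factorizes across coordinates, and a standard computation bounds the $\chi^2$-divergence as
\begin{equation*}
\chi^2(Q, P_0^n) \leq \exp\!\Big( n^2 \sum_{i \in \bulk{\sigma}} \frac{\delta_i^2}{np_0(i)} \Big) - 1 \asymp \frac{n\,\epsilon^2}{V_\sigma(p_0)},
\end{equation*}
which is $O(1)$ precisely when $\epsilon \lesssim \sqrt{V_{\ell_n}(p_0)/n} = \ell_n(p_0)$. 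Separately, the ``$1/n$'' floor in $\ell_n$ is recovered by the trivial two-point test involving a single atom, since a single sample cannot reliably detect a deviation smaller than $1/n$.

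The global bounds follow from the local ones: uniform $p_0(i) = 1/d$ gives $V_\sigma(p_0) \asymp d^{1/3} \cdot d^{2/3} \cdot$ (mass factor) $= \sqrt{d}$ after solving the fixed-point equation, yielding both the matching upper bound via $u_n(p_0) \asymp d^{1/4}/\sqrt{n}$ and (via the lower bound applied to uniform) the matching lower bound. The main obstacle I anticipate is the implicit nature of $\ell_n$ and $u_n$: one must verify the fixed-point equations have unique solutions, control how $V_\sigma(p_0)$ varies as $\sigma$ changes (the factor of $16$ gap between the definitions of $\ell_n$ and $u_n$ is what gives the matching constants), and handle the boundary coordinates where the perturbations $\delta_i$ could drive $p_\eta(i)$ negative; both issues are handled by the monotonicity of $V_\sigma$ in $\sigma$ and by a careful choice of which indices enter the bulk.
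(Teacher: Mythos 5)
First, a framing note: the paper does not prove Theorem~\ref{thm:val-val} at all; it is imported verbatim from \citet{valiant14}, and only the upper-bound half is re-derived later (Theorem~\ref{thm:valup}, Appendix~\ref{app:mult}) by invoking lemmas from that paper. So there is no in-paper proof to compare against, and your task was effectively to reconstruct the Valiant--Valiant argument. Your upper-bound outline (bulk/tail split, tail test via Chebyshev on $\sum_{i\in T}(X_i-np_0(i))$, bulk test via the $p_0^{2/3}$-weighted centered $\chi^2$ statistic, Cauchy--Schwarz to relate $\sum_B|\Delta_i|$ to $\sum_B\Delta_i^2/p_0(i)^{2/3}$) is the right skeleton and matches both Valiant--Valiant and the paper's $\phi_V$ test.

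There is, however, a genuine gap in your lower bound. You set up the Rademacher-sign mixture correctly, but the $\chi^2$-divergence bound you write down,
\[
\chi^2(Q,P_0^n)\ \leq\ \exp\!\Big(n^2\sum_{i\in\bulk{\sigma}}\frac{\delta_i^2}{np_0(i)}\Big)-1
\ =\ \exp\!\Big(n\sum_i\frac{\delta_i^2}{p_0(i)}\Big)-1,
\]
is the \emph{fixed-alternative} formula, i.e. $\chi^2(P_\eta^n,P_0^n)$ for a single deterministic $\eta$. It does not reflect the cancellation that the random-sign mixture is supposed to buy you. The correct computation for the Poissonized mixture is
\[
\mathbb{E}_0[W_n^2]\ =\ \prod_{i}\mathbb{E}_{\eta_i,\eta_i'}\exp\!\Big(\frac{n\delta_i^2}{p_0(i)}\eta_i\eta_i'\Big)
\ =\ \prod_{i}\cosh\!\Big(\frac{n\delta_i^2}{p_0(i)}\Big)
\ \leq\ \exp\!\Big(\frac{n^2}{2}\sum_i\frac{\delta_i^4}{p_0(i)^2}\Big),
\]
and it is precisely the fourth powers $\delta_i^4/p_0(i)^2$ that make the $2/3$-rd-power allocation $\delta_i\propto p_0(i)^{2/3}$ optimal: plugging in gives $\frac{n^2}{2}\sum\delta_i^4/p_0(i)^2\asymp n^2\epsilon^4/V_\sigma(p_0)^2$, and requiring this to be $O(1)$ yields exactly $\epsilon\lesssim\sqrt{V_\sigma(p_0)/n}=\ell_n(p_0)$. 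By contrast, if you actually simplify the exponent you wrote, $n\sum\delta_i^2/p_0(i)$, with your $\delta_i\propto p_0(i)^{2/3}$, you get $n\epsilon^2\,\bigl(\sum p_0^{1/3}\bigr)/\bigl(\sum p_0^{2/3}\bigr)^2$, which is \emph{not} $n\epsilon^2/V_\sigma(p_0)$ in general; for $p_0$ uniform it evaluates to $n\epsilon^2$, which would only rule out $\epsilon\lesssim n^{-1/2}$ (the fixed-$d$ parametric rate), not the needed $d^{1/4}n^{-1/2}$. So the stated asymptotic equivalence $\asymp n\epsilon^2/V_\sigma(p_0)$ is not a simplification of your formula, and without the $\cosh$-to-$\delta^4$ step the lower bound fails to produce the claimed local rate. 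The analogous computation (in the density setting) appears in the paper's Lemma~\ref{lem:rho}, where the bound $\prod_j\cosh(n\rho_j^2 a_j)\leq\exp(4n^2\sum_j\rho_j^4/p_0^2(x_j))$ is exactly the structure you are missing.

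Two smaller points: your global-rate arithmetic ``$d^{1/3}\cdot d^{2/3}\cdot$(mass factor)$=\sqrt d$'' is garbled (the conclusion $V_\sigma\asymp\sqrt d$ for uniform is right, but the computation as written gives $d$, not $\sqrt d$), and you should be explicit that the tail indices and index $i=1$ must carry a compensating adjustment so that $\sum_i p_\eta(i)=1$ while keeping $p_\eta$ nonnegative — this is a nuisance but handled correctly by the $\sigma$-bulk truncation.
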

\noindent {\bf Remarks: } 
\begin{itemize}
\item The local critical radius is roughly determined by the (truncated) 2/3-rd norm of the multinomial $p_0$. This norm is maximized when $p_0$ is uniform and is small when $p_0$ is sparse, and at a high-level captures the ``effective sparsity'' of $p_0$.

\item The global critical radius can shrink to zero even when $d \gg n$. 
When $d \asymp \sqrt{n}$ almost all categories of the multinomial are unobserved
but it is still possible to reliably distinguish any $p_0$ from an $\ellone$-neighborhood. This phenomenon 
is noted for instance in the work of \cite{paninski08}.
We also note the work of \citet{barron89} that shows that when $d = \omega(n)$, no test can have power that approaches $1$ at an exponential rate.

\item The local critical radius can be much smaller than the global minimax radius. If the multinomial $p_0$ is nearly (or exactly) $s$-sparse then the critical radius is upper and lower bounded up to constants by $s^{1/4}/\sqrt{n}$. Furthermore, these results also show that it is
possible to design consistent tests for sufficiently structured null hypotheses:
in cases when 
$\sqrt{d} \gg n,$ and even in cases when $d$ is infinite.

\item Except for certain 
pathological multinomials, the upper and lower critical radii match up to constants. We revisit this issue in Appendix~\ref{app:tfunc}, in the context of Lipschitz densities, where we present examples where the solutions to critical equations similar to~\eqref{eqn:multrad} are stable and examples where they are unstable.

\end{itemize}

\noindent 
In the remainder of this section we consider a variety of tests, including the test presented
in \cite{valiant14} and several alternatives. 
The test of \cite{valiant14} is a composite
test that requires knowledge of $\critradn$ and the analysis of their test is quite intricate.
We present an alternative, simple test that is globally minimax, and then present an alternative
composite test that is locally minimax but simpler to analyze. Finally, we present a few illustrative simulations.

\subsection{The truncated $\chi^2$ test}
We begin with a simple globally minimax test. 
From a practical standpoint, the most popular test for multinomials is Pearson's $\chi^2$ test. However, in the high-dimensional regime where the dimension of the multinomial 
$d$ is not treated as fixed the $\chi^2$ test can have bad power due to the fact that 
the variance of the $\chi^2$ statistic is dominated by small entries of the multinomial
(see \cite{valiant14,mariott15}).

 A natural
thought then is to truncate the normalization factors of
the $\chi^2$ statistic in order to limit the contribution to the
variance from each cell of the multinomial. 
Recalling that $(X_1,\ldots,X_d)$ denote the observed counts, 
we propose
the test statistic:
\begin{align}
\label{eqn:truncstat}
\truncstat = \sum_{i=1}^d \frac{(X_i - np_0(i))^2 - X_i}{ \max\{1/d, p_0(i) \}} :=  
\sum_{i=1}^d \frac{(X_i - np_0(i))^2 - X_i}{ \theta_i}
\end{align}
and the corresponding test,
\begin{align}
\label{eqn:trunctest}
\trunctest = \mathbb{I}\left(\truncstat >  n \sqrt{ \frac{2}{\alpha} \sum_{i=1}^d \frac{ p_0(i)^2 }{ \theta_i^2}}  \right).
\end{align}
This test statistic truncates the 
usual normalization factor for the $\chi^2$ test for any entry which falls below $1/d$, and thus 
ensures that very small entries in $p_0$ do not have a large effect on the variance of the statistic.
We emphasize the simplicity and practicality of this test.
We have the following result which bounds the power and size of the truncated $\chi^2$ test. We use
$C > 0$ to denote a positive universal constant. 
\begin{theorem}
\label{thm:truncchisq}
Consider the testing problem in~\eqref{test:mult}. The truncated $\chi^2$ test has size at most $\alpha$, i.e.
$P_0(\trunctest = 1) \leq \alpha.$ Furthermore, there is a universal constant $C > 0$ such that if for any $0 < \zeta \leq 1$ we have that,
\begin{align}
\label{eqn:trunccritrad}
\critradn^2  \geq \frac{C \sqrt{d}}{n} \left[ \frac{1}{\sqrt{\alpha}}+\frac{1}{\zeta} \right], 
\end{align}
then the Type II error of the test is bounded by $\zeta$, i.e. $P(\trunctest = 0) \leq \zeta.$
\end{theorem}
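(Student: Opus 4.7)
The plan is to analyze the size and the Type II error of $\trunctest$ separately, each via Chebyshev's inequality applied to cell-wise moment computations that become clean under Poissonization. Working in the Poissonized model, the counts $X_1, \ldots, X_d$ are independent with $X_i \sim \poi(np(i))$, and the identity I will repeatedly exploit is that for $X \sim \poi(\lambda)$ and any constant $\mu$,
\begin{align*}
\mathbb{E}\bigl[(X-\mu)^2 - X\bigr] = (\lambda - \mu)^2, \qquad \mathrm{Var}\bigl((X-\mu)^2 - X\bigr) = 2\lambda^2 + 4\lambda(\lambda - \mu)^2,
\end{align*}
which follow by writing $X = \lambda + W$ and using the first four central moments of a Poisson variable.

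For the size bound, specializing to $\lambda = \mu = np_0(i)$ and summing over independent cells yields $\mathbb{E}_{P_0}[\truncstat] = 0$ and $\mathrm{Var}_{P_0}(\truncstat) = 2 n^2 \sum_i p_0(i)^2/\theta_i^2$. The threshold in $\trunctest$ is $\sqrt{\mathrm{Var}_{P_0}(\truncstat)/\alpha}$, so Chebyshev's inequality immediately gives $P_0(\trunctest = 1) \le \alpha$. For the power analysis, the mean identity yields $\mathbb{E}_P[\truncstat] = n^2 \sum_i (p(i) - p_0(i))^2 / \theta_i$, which I will lower-bound using Cauchy--Schwarz:
\begin{align*}
\|p - p_0\|_1 = \sum_i \frac{|p(i) - p_0(i)|}{\sqrt{\theta_i}} \cdot \sqrt{\theta_i} \le \sqrt{\sum_i \frac{(p(i)-p_0(i))^2}{\theta_i}} \cdot \sqrt{\sum_i \theta_i},
\end{align*}
combined with $\sum_i \theta_i \le \sum_i (1/d + p_0(i)) = 2$, to obtain $\mathbb{E}_P[\truncstat] \ge n^2 \epsilon_n^2/2$. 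The variance identity gives
\begin{align*}
\mathrm{Var}_P(\truncstat) = 2 n^2 \sum_i \frac{p(i)^2}{\theta_i^2} + 4 n^3 \sum_i \frac{p(i)(p(i) - p_0(i))^2}{\theta_i^2},
\end{align*}
and I will bound this using $\theta_i^2 \ge p_0(i)/d$ (so $\sum_i p_0(i)^2/\theta_i^2 \le d$) and $p(i)/\theta_i \le d$ (from $p(i) \le 1$ and $\theta_i \ge 1/d$), together with the expansion $p = p_0 + \Delta$, to get a bound of the form $C_1 n^2 d + C_2 n \, \mathbb{E}_P[\truncstat]$. Since the threshold is at most $n \sqrt{2d/\alpha}$, the hypothesized condition on $\epsilon_n^2$ ensures $\mathbb{E}_P[\truncstat] \ge 2 t_{\text{thresh}}$ (giving the $1/\sqrt{\alpha}$ term) and $\mathrm{Var}_P(\truncstat)/(\mathbb{E}_P[\truncstat]/2)^2 \le \zeta$ (giving the $1/\zeta$ term), so that Chebyshev's inequality applied to $\truncstat - \mathbb{E}_P[\truncstat]$ yields $P_P(\truncstat < t_{\text{thresh}}) \le \zeta$.

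The main technical obstacle is bounding the cubic variance term $4n^3 \sum_i p(i)(p(i) - p_0(i))^2/\theta_i^2$ tightly enough, as naively using $p(i)/\theta_i \le d$ throughout loses a factor of $\sqrt{d}$. The truncation $\theta_i \ge 1/d$ is precisely what caps the worst-case per-cell contribution, and I will split the sum according to whether $p(i) \le 2 p_0(i)$ (so that $p(i)/\theta_i \le 2$ and the term is absorbed into $\mathbb{E}_P[\truncstat]$ via $\sum (p-p_0)^2/\theta_i$) or $p(i) > 2 p_0(i)$ (so that $|p(i) - p_0(i)| \ge p(i)/2$ and the term is controlled using the $\ell_1$ constraint $\|p - p_0\|_1 \le 2$). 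Apart from this splitting, the remaining steps are routine bookkeeping with Poisson moments.
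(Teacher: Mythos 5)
Your overall architecture---Poissonize, compute exact cell-wise moments via $\mathbb{E}[(X-\mu)^2-X]=(\lambda-\mu)^2$ and $\mathrm{Var}((X-\mu)^2-X)=2\lambda^2+4\lambda(\lambda-\mu)^2$, Chebyshev in both directions, and Cauchy--Schwarz with $\sum_i\theta_i\le 2$ for the mean lower bound---is exactly the paper's strategy; your compact variance formula $2n^2\sum p(i)^2/\theta_i^2+4n^3\sum p(i)\Delta_i^2/\theta_i^2$ is algebraically identical to the paper's expansion, and your bounds on the quadratic piece $2n^2\sum p(i)^2/\theta_i^2$ are correct. The one step you yourself flag as the main obstacle, however, contains a genuine gap: the split on $p(i)\le 2p_0(i)$ versus $p(i)>2p_0(i)$ with the $\ell_1$ constraint does not close the cubic term.

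Your easy case ($p(i)\le 2p_0(i)$, hence $p(i)/\theta_i\le 2$) correctly reduces that part of $4n^3\sum p(i)\Delta_i^2/\theta_i^2$ to $8n\,\mathbb{E}_P[\truncstat]$, whose ratio to $\mathbb{E}_P[\truncstat]^2$ is $\lesssim 1/(n\critradn^2)\lesssim\zeta/(C\sqrt d)$, which is fine. In the other case the substitution $p(i)<2|\Delta_i|$ reduces the problem to bounding $\sum|\Delta_i|^3/\theta_i^2$, and here the $\ell_1$ constraint does not give what you need. Every natural use of $\|p-p_0\|_1\le 2$ together with $\theta_i\ge 1/d$---for instance $\sum|\Delta_i|^3/\theta_i^2\le d\max_i|\Delta_i|\sum_i\Delta_i^2/\theta_i\le d\,\mathbb{E}_P[\truncstat]/n^2$, or the cruder $\sum|\Delta_i|^3/\theta_i^2\le d^2\max_i|\Delta_i|^2\sum_i|\Delta_i|\le 2d^2$---produces a cubic variance contribution whose ratio to $\mathbb{E}_P[\truncstat]^2$ scales like $d/(n\critradn^2)$ or worse. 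Under the hypothesis $\critradn^2\gtrsim\sqrt d/(n\zeta)$ this is only $\lesssim\sqrt d\,\zeta/C$, and the stray $\sqrt d$ cannot be absorbed into a universal constant. So the case split, as described, fails; the $\ell_1$ constraint is the wrong tool.

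What the paper actually uses---and what you need---is monotonicity of $\ell_p$ norms, $\sum_i|a_i|^3\le\bigl(\sum_i a_i^2\bigr)^{3/2}$, applied with $a_i=|\Delta_i|/\theta_i^{2/3}$. This yields $\sum_i|\Delta_i|^3/\theta_i^2\le\bigl(\sum_i\Delta_i^2/\theta_i^{4/3}\bigr)^{3/2}$, and then $\theta_i^{4/3}=\theta_i\cdot\theta_i^{1/3}\ge\theta_i\,d^{-1/3}$ gives $\sum_i\Delta_i^2/\theta_i^{4/3}\le d^{1/3}\,\mathbb{E}_P[\truncstat]/n^2$. The cubic term is therefore $\lesssim\sqrt d\,\mathbb{E}_P[\truncstat]^{3/2}$, whose ratio to $\mathbb{E}_P[\truncstat]^2$ is $\sqrt d/\sqrt{\mathbb{E}_P[\truncstat]}\lesssim\sqrt d/(n\critradn)$; this is controlled by the hypothesis after also invoking $\critradn\le 2$ to convert the stated condition into the lower bound $n\gtrsim\sqrt d/\zeta$. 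No case split on $p(i)$ versus $p_0(i)$ is needed once you switch from the $\ell_1$ bound to $\|\Delta\|_3\le\|\Delta\|_2$ in the $\theta$-weighted form. You should replace your case-2 argument with this step; everything else in your outline then goes through.
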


%

\noindent {\bf Remarks:}
\begin{itemize}
\item A straightforward consequence of this result together with the result in Theorem~\ref{thm:val-val}
is that the truncated $\chi^2$ test is globally minimax optimal. 
\item The classical $\chi^2$ and likelihood ratio tests are not generally consistent (and thus not globally minimax optimal) in the high-dimensional regime (see also, Figure~\ref{fig:power}).
\item At a high-level the proof follows by verifying that when the alternate hypothesis is true,
under the condition on the critical
radius in~\eqref{eqn:trunccritrad}, the test statistic is larger than the threshold in~\eqref{eqn:trunctest}.
To verify this, we lower bound the mean and upper bound the variance of the test statistic under the alternate and then use standard concentration results. We defer the details to the Supplementary Material (Appendix~\ref{app:mult}).
\end{itemize}

\subsection{The $2/3$-rd + tail test}
The truncated $\chi^2$ test described in the previous section, although globally minimax, is not locally adaptive. The test from \cite{valiant14}, achieves the local minimax upper bound in Theorem~\ref{thm:val-val}. We refer to this as the $2/3$-rd + tail test. We use a slightly modified version of their test when testing Lipschitz goodness-of-fit in Section~\ref{sec:lipschitz}, and provide a description here.

The test is a composite two-stage test, and has a tuning parameter $\sigma$. Recalling the definitions
of $\bulk{\vparam}$ and $\epstail{\vparam}$ (see~\eqref{eqn:taildef}), we define two test statistics $T_1,T_2$ 
and corresponding test thresholds $t_1,t_2$:
\begin{align*}
T_1(\sigma) &= \sum_{j \in \epstail{\sigma}} (X_j - np_0(j)),~~~~~~~~~~~~~~~t_{1}(\alpha,\sigma) =  \sqrt{ \frac{n P_0(\epstail{\sigma}) }{\alpha}}, \\
T_2(\sigma) &= \sum_{j\in \bulk{\sigma}} \frac{(X_j - n p_0(j))^2 - X_j}{p_j^{2/3}},~~~~~t_{2}(\alpha,\sigma) = \sqrt{ \frac{\sum_{j \in {\cal B}_\sigma} 2n^2p_0(j)^{2/3}}{\alpha}}. 
\end{align*}
%
We define two tests: 
\begin{enumerate}
\item The tail test: $\tailtest(\sigma,\alpha) = \mathbb{I}(T_1(\sigma) > t_{1}( \alpha,\sigma)).$
\item The 2/3-test: $\phi_{2/3}(\sigma,\alpha) = \mathbb{I}(T_2(\sigma) > t_2(\alpha,\sigma))$.
\end{enumerate}
The composite test $\phi_V(\sigma,\alpha)$ is then given as:
\begin{align}
\label{eqn:valtest}
 \phi_V(\sigma,\alpha) &= \max\{\tailtest(\sigma,\alpha/2), \phi_{2/3}(\sigma,\alpha/2)\}.
\end{align}
With these definitions in place, the following result is essentially from
the work of \cite{valiant14}. We use $C > 0$ to denote a positive universal constant.
\begin{theorem}
\label{thm:valup}
Consider the testing problem in~\eqref{test:mult}. The composite test $\phi_V(\sigma,\alpha)$ 
has size at most $\alpha$, i.e.
$P_0(\phi_V = 1) \leq \alpha.$ Furthermore, 
if
we choose $\sigma = \critradn(p_0,\mathcal{M})/8$, and $u_n(p_0)$ as in~\eqref{eqn:multrad}, then 
for any $0 < \zeta \leq 1$, if
\begin{align}
\critradn(p_0, \mathcal{M}) \geq C u_n(p_0) \max\{1/\alpha, 1/\zeta\},  
\end{align}
then the Type II error of the test is bounded by $\zeta$, i.e. $P(\phi_V = 0) \leq \zeta.$
\end{theorem}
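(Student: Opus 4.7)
My plan is to analyze the two component tests $\tailtest$ and $\phi_{2/3}$ separately and combine them via a union bound, exploiting that under Poissonization the cell counts $\{X_j\}$ are independent Poissons. For size, elementary Poisson moment calculations give $\mathbb{E}_{P_0}[T_1(\sigma)] = 0$ with $\mathrm{Var}_{P_0}(T_1(\sigma)) = n P_0(\epstail{\sigma})$; the identity $\mathbb{E}[(X - \mu)^2 - X] = 0$ for $X \sim \poi(\mu)$ together with the Poisson fourth central moment yields $\mathbb{E}_{P_0}[T_2(\sigma)] = 0$ and $\mathrm{Var}_{P_0}(T_2(\sigma)) = 2 n^2 \sum_{j \in \bulk{\sigma}} p_0(j)^{2/3}$. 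Applying Chebyshev at level $\alpha/2$ to each test and taking a union bound then gives $P_0(\phi_V = 1) \leq \alpha$.

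For power I write $\delta_j := p_j - p_0(j)$, $A := \epstail{\sigma}$, $B := \bulk{\sigma}$, $\epsilon_A := \sum_{j \in A}|\delta_j|$, and $\epsilon_B := \sum_{j \in B}|\delta_j|$. The zero-sum constraint $\sum_j \delta_j = 0$ gives $|\delta_1| \leq \epsilon_A + \epsilon_B$, hence $\epsilon_A + \epsilon_B \geq \critradn/2$, and the decomposition $\epsilon_A = [P(A) - P_0(A)] + 2\sum_{j \in A,\, p_j < p_0(j)} (p_0(j) - p_j) \leq [P(A) - P_0(A)] + 2\sigma$ motivates splitting on the sign of $P(A) - P_0(A) - \sigma$. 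In the tail-dominant case $P(A) - P_0(A) \geq \sigma$, one has $\mathbb{E}_P[T_1(\sigma)] = n[P(A) - P_0(A)] \geq n\critradn/8$, while $t_1 \leq \sqrt{2n\sigma/\alpha}$ and $\mathrm{Var}_P(T_1) \leq n P(A)$; the hypothesis $\critradn \geq C u_n(p_0) \max(1/\alpha, 1/\zeta) \geq C/(n \min(\alpha,\zeta))$ makes the signal dominate both the threshold and $\sqrt{\mathrm{Var}_P(T_1)/\zeta}$ by the required multiples, and one-sided Chebyshev caps the Type II error of $\tailtest$ at $\zeta$.

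In the complementary case $P(A) - P_0(A) < \sigma$, the identity above forces $\epsilon_A < 3\sigma$, hence $\epsilon_B \geq \critradn/2 - 3\sigma = \critradn/8$. Now $\mathbb{E}_P[T_2(\sigma)] = n^2 \sum_{j \in B} \delta_j^2 / p_0(j)^{2/3}$, and Cauchy--Schwarz yields $\mathbb{E}_P[T_2] \geq n^2 \epsilon_B^2 / S_\sigma$, where $S_\sigma := \sum_{j \in B} p_0(j)^{2/3} = \truncnorm{\sigma}{p_0}^{2/3}$. Since $\truncnorm{\sigma}{p_0}$ is non-increasing in $\sigma$, the choice $\sigma = \critradn/8$ combined with $\critradn \geq 2 u_n(p_0)$ (which holds under the theorem's hypothesis) gives $\sigma \geq u_n(p_0)/16$, and the defining equation $u_n^2 = \truncnorm{u_n/16}{p_0}/n$ then implies $S_\sigma \leq (n u_n^2)^{2/3}$, yielding $\mathbb{E}_P[T_2] \geq c n^2 \critradn^2 / (n u_n^2)^{2/3}$. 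Under the stated lower bound on $\critradn$ this exceeds both $t_2$ and $\sqrt{\mathrm{Var}_P(T_2)/\zeta}$ by the required multiples, and Chebyshev closes the argument.

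The most delicate step is the alternative-variance bound for $T_2$. A Poisson fourth-moment computation gives $\mathrm{Var}_P(T_2) \leq 2 n^2 S_\sigma + 4 n^3 \sum_{j \in B} p_j \delta_j^2 / p_0(j)^{4/3}$, and the second term can dominate the first when some $p_j$ greatly exceed the corresponding $p_0(j)$. One handles this by partitioning $B$ into cells with $p_j \leq 2 p_0(j)$, on which the second term is controlled by the first up to constants, and its complement, on which the bulk constraint $p_0(j) \leq \sigma$ together with an additional Cauchy--Schwarz expresses the contribution in terms of $\epsilon_B$ itself. The extra slack factor $\max(1/\alpha, 1/\zeta)$ in the hypothesis affords enough room to absorb these constants without a more refined peeling argument.
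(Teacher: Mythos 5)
Your outline matches the paper's structure in its easy parts: the size bound via Chebyshev with a Bonferroni split, the case distinction between a tail-dominant and a bulk-dominant alternative (your threshold $P(A)-P_0(A)\geq\sigma$ is essentially the paper's $\sum_{i\in\epstail{\sigma}}|p_0(i)-p(i)|\geq 3\sigma$), the zero-sum bound $\epsilon_A+\epsilon_B\geq\critradn/2$, and the Cauchy--Schwarz lower bound $\E_P[T_2]\geq n^2\epsilon_B^2/S_\sigma$ combined with $V_\sigma \leq V_{u_n/16}$. All of that is sound and parallels the paper.

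The genuine gap is exactly the step you flag as ``most delicate'': bounding $\mathrm{Var}_P(T_2)$ by a small multiple of $\E_P[T_2]^2$. The paper does \emph{not} prove this itself --- it cites Lemma~6 of \citet{valiant14} (reproduced as Lemma~\ref{lem:valiant_main}) together with a companion fact from the proof of their Proposition~1 (Lemma~\ref{lem:nrelate}, which is what produces the shift from $\sigma$ to $\sigma/2$ and ultimately the $u_n/16$ in the critical equation, absorbing the $p_{\min,\sigma}$ term that otherwise appears). Your proposal attempts to replace this black box with a self-contained peeling argument, but the argument as sketched has two concrete errors. First, the exact per-cell Poisson variance for $Y_j = (X_j - np_0(j))^2 - X_j$ is $2n^2 p_j^2 + 4n^3 p_j \delta_j^2$, so
\begin{align*}
\mathrm{Var}_P(T_2) = \sum_{j\in B}\frac{2n^2 p_j^2 + 4n^3 p_j\delta_j^2}{p_0(j)^{4/3}},
\end{align*}
and the first summand is $2n^2\sum_j p_j^2/p_0(j)^{4/3}$, which is \emph{not} bounded by $2n^2 S_\sigma = 2n^2\sum_j p_0(j)^{2/3}$ once some $p_j$ exceed $p_0(j)$; your stated bound is already false before any peeling. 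Second, the ``bulk constraint $p_0(j)\leq\sigma$'' does not hold: $\bulk{\sigma}$ consists precisely of the indices $i>1$ \emph{not} in the $\sigma$-tail, so its cells are the larger ones (bounded only by $p_0(2)$), and in typical examples (e.g.\ near-uniform $p_0$) one has $p_0(j)\gg\sigma$ for $j\in\bulk{\sigma}$. With that constraint gone, the complementary branch of the peeling has no handle at all. The proof of the variance-to-mean-squared bound is the nontrivial heart of the Valiant--Valiant analysis and is not recovered by a two-case split on $p_j\lessgtr 2p_0(j)$; if you want a proof that does not lean on \citet{valiant14}'s Lemma~6, you would need to reproduce essentially their argument, not the sketch given here.
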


\noindent{\bf Remarks:} 
\begin{itemize}
\item The test $\phi_V$ is also motivated by deficiencies of the $\chi^2$ test. In particular, the test includes two main modifications to the $\chi^2$ test which limit the contribution of the small entries of $p_0$: some of the small entries of $p_0$ are dealt with via a separate tail test and further the normalization of the $\chi^2$ test is changed from $p_0(i)$ 
to $p_0(i)^{2/3}$. 
\item This result provides the upper bound of Theorem~\ref{thm:val-val}. It requires that the tuning parameter $\sigma$ is chosen as $\critradn(p_0,\mathcal{M})/8$. 
In the Supplementary Material (Appendix~\ref{app:adapt}) we discuss adaptive choices for $\sigma$.
\item The proof essentially follows from the paper of \cite{valiant14}, but we maintain an explicit bound on the power and size of the test, which we use in later sections. We provide the details in Appendix~\ref{app:mult}. 
\end{itemize}
\noindent While the 2/3-rd norm test is locally minimax optimal its analysis is quite challenging. In the next section, we build on results from a recent paper of \citet{diakon16} to provide an alternative (nearly) locally minimax test with a simpler analysis.

\subsection{The Max Test}
An important insight, one that is seen for instance in Figure~\ref{fig:unif}, is that many simple tests 
are optimal when $p_0$ is uniform and that careful modifications to the $\chi^2$ test 
are required only when $p_0$ is far from uniform.
This suggests  the following strategy: partition the multinomial into nearly uniform groups,
apply a simple test within each group and combine the tests with an appropriate Bonferroni correction.
We refer to this as the max test. Such a strategy was used by \citet{diakon16}, but their test 
is quite complicated and involves many constants.
Furthermore,
it is not clear how to ensure that their test controls the Type I error at level $\alpha$.
Motivated by their approach,
we present a simple test that controls the type I error as required and is (nearly)
locally minimax.

As with the test in the previous section,
the test has to be combined with the tail test.
The test is defined to be
\begin{align*}
\phi_{\max}(\sigma, \alpha) = \max\{\phi_{\rm tail}(\sigma,\alpha/2) ,\phi_{M}(\sigma,\alpha/2)\},
\end{align*}
where 
$\phi_{M}$ is defined as follows.
We partition $\bulk{\sigma}$ into sets $S_j$ for $j \geq 1$, where
\begin{align*}
S_j = \Bigl\{ t:\ \frac{p_0(2)}{2^{j}} < p_0(t) \leq \frac{p_0(2)}{2^{j-1}} \Bigr\}.
\end{align*}
We can bound the total number of sets $S_j$ by noting that for any $i \in \bulk{\sigma}$, we have
that $p_0(i) \geq \sigma/d$, so that the number of sets $k$ is bounded by $\lceil\log_2 (d/\sigma)\rceil$. Within each set we use a modified $\ell_2$ statistic.
Let
\begin{equation}
\label{eqn:elltwo}
T_j = \sum_{t\in S_j}[(X_t - n p_0(t))^2 - X_t]
\end{equation}
for $j \geq 1$. Unlike the traditional $\ell_2$ statistic, each term 
in this statistic is centered around $X_t$.
As observed in \cite{valiant14}, this results in
the statistic having smaller variance in the $n \ll d$ regime.
Let
\begin{equation}
\label{eqn:maxtest}
\phi_M(\sigma,\alpha) = \bigvee_j \mathbb{I}(T_j > t_j),
\end{equation}
where
\begin{align}
\label{eqn:thresh}
t_j =   \sqrt{ \frac{2k n^2 \left[ \sum_{t \in S_j} p_0(t)^2 \right]  }{\alpha}}.
\end{align}
\begin{theorem}
\label{thm:max}
Consider the testing problem in~\eqref{test:mult}. Suppose we choose $\sigma = \critradn(p_0,\mathcal{M})/8$, then the composite test $\maxtest(\sigma,\alpha)$ 
has size at most $\alpha$, i.e.
$P_0(\maxtest = 1) \leq \alpha.$ Furthermore, there is a universal constant $C > 0$, such that 
for $u_n(p_0)$ as in~\eqref{eqn:multrad}, if for any $0 < \zeta \leq 1$ we have that,
\begin{align}
\label{eqn:maxcritrad}
\critradn(p_0,\mathcal{M})  \geq C k^2 u_n(p_0) \max\left\{\frac{\sqrt{k}}{\alpha}, \frac{1}{\zeta}\right\},  
\end{align}
then the Type II error of the test is bounded by $\zeta$, i.e. $P(\maxtest = 0) \leq \zeta.$
\end{theorem}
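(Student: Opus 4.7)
The plan is to bound the size and the type~II error of the composite test $\maxtest(\sigma,\alpha)=\max\{\tailtest(\sigma,\alpha/2),\phi_M(\sigma,\alpha/2)\}$ separately, using union bounds and a tail-versus-bulk decomposition of the $\ell_1$ deviation. For the size, it suffices by a union bound to control each of $\tailtest$ and $\phi_M$ at level $\alpha/2$. The tail test is handled exactly as in the proof of Theorem~\ref{thm:valup}: under Poissonization $T_1$ has mean zero and variance $nP_0(\epstail{\sigma})$ under $P_0$, so Chebyshev gives the required level. For $\phi_M=\bigvee_j\mathbb{I}(T_j>t_j)$, a further union bound reduces the task to controlling each $T_j$ at level $\alpha/(2k)$; a direct Poisson moment computation (exploiting that $(X_t-np_0(t))^2-X_t$ is the natural bias-corrected square of a Poisson) gives $E_{P_0}[T_j]=0$ and $\mathrm{Var}_{P_0}(T_j)=2n^2\sum_{t\in S_j}p_0(t)^2$, and Chebyshev with the threshold~\eqref{eqn:thresh} (with $\alpha$ there replaced by $\alpha/2$) closes out the size claim.

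For the power, fix an alternative $P$ with $\|p-p_0\|_1\geq\critradn:=\critradn(p_0,\multclass)$ and decompose this deviation across $\epstail{\sigma}$ and $\bulk{\sigma}$ with $\sigma=\critradn/8$. If $\|p-p_0\|_{1,\epstail{\sigma}}\geq\critradn/2$ then $\tailtest$ rejects with probability at least $1-\zeta$ via the lower bound on $E_P[T_1]$ used in Theorem~\ref{thm:valup}. Otherwise $\|p-p_0\|_{1,\bulk{\sigma}}\geq\critradn/2$, and by pigeonhole over the $k\leq\lceil\log_2(d/\sigma)\rceil$ pieces $S_j$ there exists some $j$ with $\|p-p_0\|_{1,S_j}\geq\critradn/(2k)$. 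The analogous Poisson computation under $P$ yields $E_P[T_j]=n^2\sum_{t\in S_j}(p(t)-p_0(t))^2$, and Cauchy--Schwarz then converts the $\ell_1$-mass bound on $S_j$ into
\[
E_P[T_j]\;\geq\;\frac{n^2\,\critradn^2}{4\,k^2\,|S_j|}.
\]

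The remaining task is to show that this mean lower bound exceeds both the null threshold $t_j$ and a $1/\sqrt{\zeta}$-multiple of $\mathrm{SD}_P(T_j)$ under the hypothesis $\critradn\geq Ck^2u_n(p_0)\max\{\sqrt k/\alpha,1/\zeta\}$. The geometric partition forces $p_0(t)\asymp\bar p_j:=P_0(S_j)/|S_j|$ for every $t\in S_j$, which gives the two-sided controls $\sum_{t\in S_j}p_0(t)^2\asymp|S_j|\bar p_j^2$ and $\bigl(\sum_{t\in S_j}p_0(t)^{2/3}\bigr)^{3/2}\asymp|S_j|^{3/2}\bar p_j$; the latter is dominated by $\truncnorm{\sigma}{p_0}$, which by the fixed-point relation~\eqref{eqn:multrad} is in turn $\lesssim nu_n(p_0)^2$ because $\sigma\leq u_n(p_0)/16$. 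Combining these with the Poisson variance expansion $\mathrm{Var}_P(T_j)=2n^2\sum p(t)^2+4n^3\sum p(t)(p(t)-p_0(t))^2$ (whose cross term vanishes by the identity $\mathrm{Cov}((X_t-np(t))^2-X_t,X_t)=0$) and absorbing the alternative-dependent piece into the squared mean, yields the claimed condition on $\critradn$. The main obstacle is precisely this bookkeeping: carefully tracking the $k$-factors that accumulate from (i) the Bonferroni correction across the $k$ sets, (ii) the pigeonhole on the bulk $\ell_1$ mass, (iii) the $\ell_1$-to-$\ell_2$ Cauchy--Schwarz loss on each $S_j$, and (iv) the mean-to-standard-deviation ratio required for Chebyshev under the alternative, and confirming that no tighter exponent on $k$ can be recovered without a more delicate concentration argument.
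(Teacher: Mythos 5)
Your overall approach mirrors the paper's proof of Theorem~\ref{thm:max}: a union bound plus Chebyshev for the size; a tail-versus-bulk case split for the power; a pigeonhole over the $k$ geometric sets $S_j$; a Cauchy--Schwarz conversion from the $\ell_1$ mass on $S_{j^*}$ to a lower bound on $\mathbb{E}_P[T_{j^*}]$; and a mean-versus-variance comparison under the alternative. Your rewritten Poisson variance $\mathrm{Var}_P(T_j)=2n^2\sum_{t\in S_j}p(t)^2+4n^3\sum_{t\in S_j}p(t)(p(t)-p_0(t))^2$ is a correct and cleaner simplification of the five-term display~\eqref{eqn:altvar}; the paper instead splits the expanded expression into four pieces $U_1,\dots,U_4$ and bounds each, but the two routes are equivalent in substance and yield the same $k$-exponents.

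There is one concrete gap in the case split. You decompose the $\ell_1$ deviation over $\epstail{\sigma}$ and $\bulk{\sigma}$ and assert that when the tail deviation is below $\critradn/2$ the bulk carries at least $\critradn/2$. But by definition $\bulk{\sigma}$ excludes the index $i=1$, so $\{1\}\cup\bulk{\sigma}\cup\epstail{\sigma}$ is the full index set and the discrepancy could sit largely on $\{1\}$, which is tested by neither $\tailtest$ nor $\phi_M$. The paper closes this by observing that since $\sum_i(p(i)-p_0(i))=0$ one has $|p(1)-p_0(1)|\leq\|p-p_0\|_1/2$, hence in the case $\|p-p_0\|_{1,\epstail{\sigma}}<3\sigma$ one still gets $\Delta_{\bulk{\sigma}}\geq\critradn/2-3\sigma=\critradn/8$ (with $\sigma=\critradn/8$), and the pigeonhole then delivers $\critradn/(8k)$ on some $S_{j^*}$ rather than the $\critradn/(2k)$ you wrote. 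This repair only changes constants, not the $k$-exponents, but it must be made. Also a minor slip of direction: to conclude $\truncnorm{\sigma}{p_0}\lesssim n\,u_n(p_0)^2$ from the fixed point~\eqref{eqn:multrad} you need $\sigma\geq u_n(p_0)/16$ (which~\eqref{eqn:maxcritrad} ensures), not $\sigma\leq u_n(p_0)/16$, since the truncated functional is \emph{decreasing} in the truncation level.
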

\noindent {\bf Remarks: }
\begin{itemize}
\item Comparing the critical radii in Equations~\eqref{eqn:maxcritrad} and~\eqref{eqn:minimaxcritrad}, and noting that $k \leq \lceil\log_2 (8d/\critradn)\rceil,$ we conclude that the max test is 
locally minimax optimal, up to a logarithmic factor.
\item In contrast to the analysis of the 2/3-rd + tail test in \cite{valiant14},
the analysis of the max test involves mostly elementary calculations. We provide the details in Appendix~\ref{app:mult}.
As emphasized in the work of~\cite{diakon16}, the 
reduction of testing problems to simpler testing problems (in this case, testing uniformity) 
is a more broadly useful idea. Our upper bound for the Lipschitz testing problem (in Section~\ref{sec:lipschitz}) proceeds
by reducing it to a multinomial testing problem through a spatially adaptive binning scheme.
\end{itemize}

\subsection{Simulations}
In this section, we report some simulation results that demonstrate
the practicality of the proposed tests. We focus on two simulation
scenarios and compare the globally-minimax truncated $\chi^2$ test,
and the 2/3rd + tail test \cite{valiant14} with the
classical $\chi^2$-test and the likelihood ratio test.
The $\chi^2$ statistic is,
\begin{align*}
T_{\chi^2} = \sum_{i=1}^d \frac{  (X_i - np_0(i))^2 - np_0(i)}{np_0(i)},
\end{align*}
and the likelihood ratio test statistic is
\begin{align*}
T_{\text{LRT}}= 2 \sum_{i=1}^d X_i \log \left(\frac{X_i}{np_0(i)} \right).
\end{align*}
In Appendix~\ref{app:extra_sims}, we consider 
a few additional simulations as well as a comparison with statistics based on the
$\ell_1$ and $\ell_2$ distances.

In each setting described below, we set the $\alpha$ level threshold via simulation (by sampling from the null 1000 times) and we calculate the power under particular alternatives by averaging over a 1000 trials. 
\begin{center}
\begin{figure}
\begin{tabular}{cc}
\includegraphics[scale=0.4]{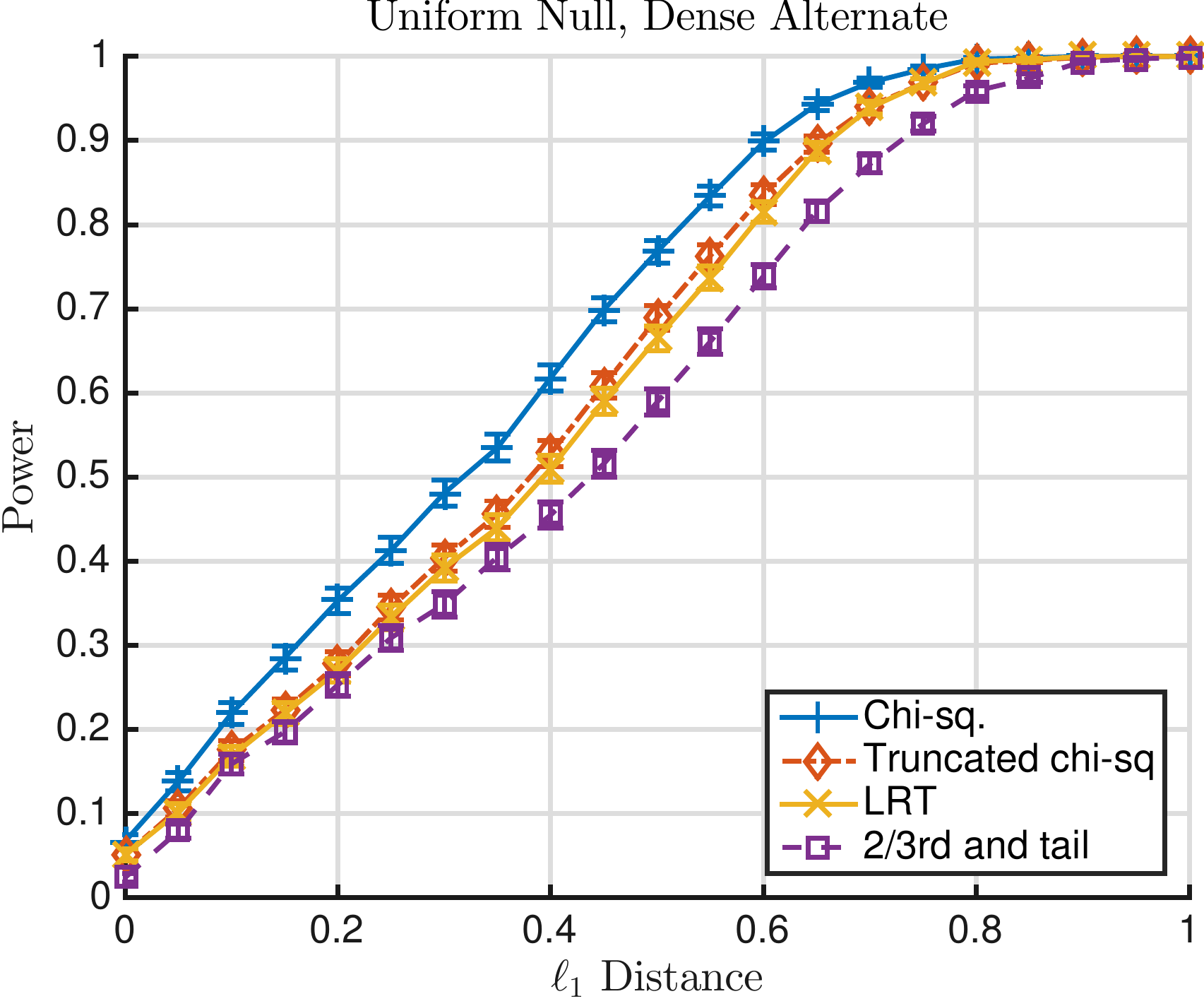} & ~~~~~\includegraphics[scale=0.4]{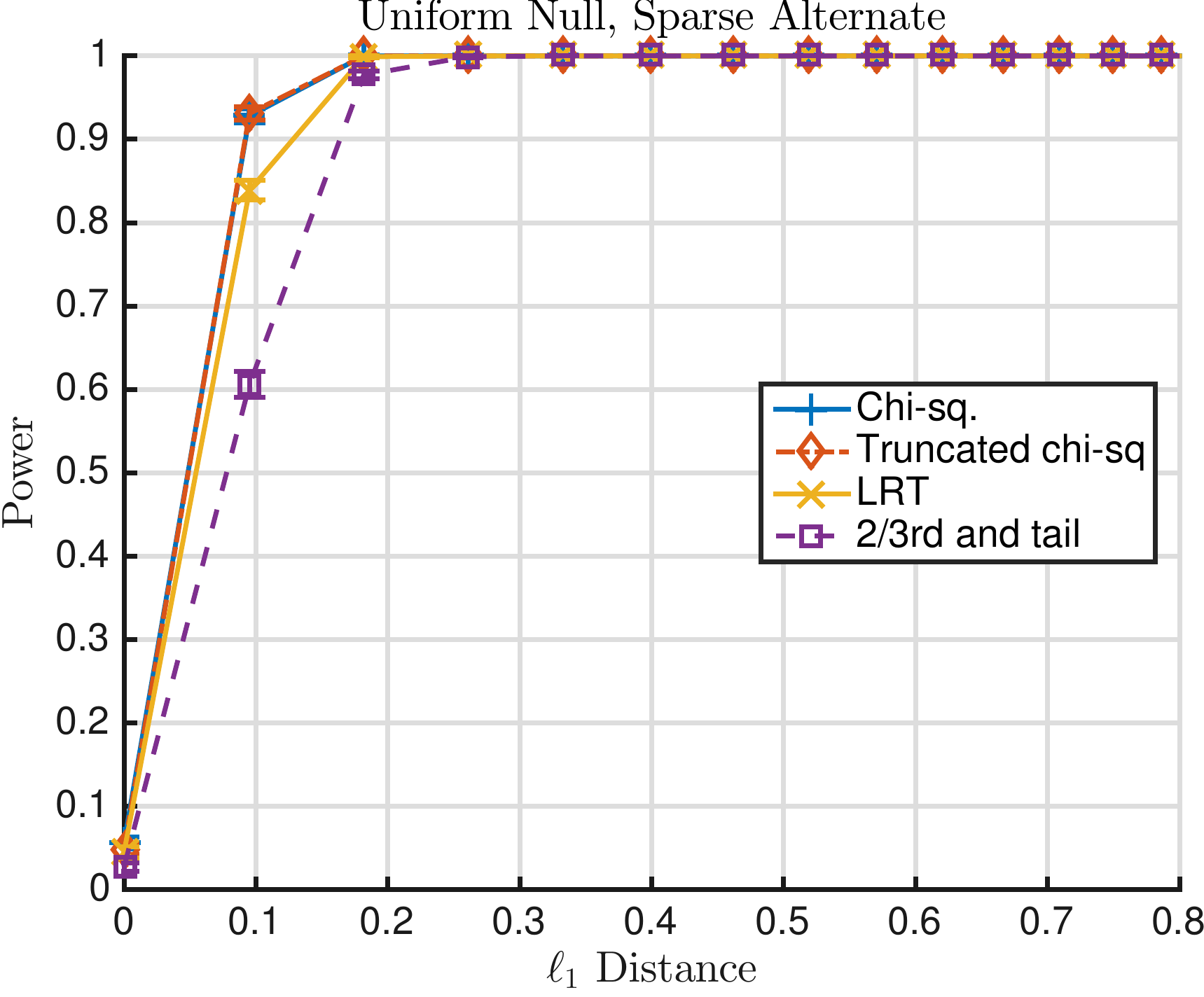}
\end{tabular}
\caption{A comparison between the truncated $\chi^2$ test, the 2/3rd + tail test \cite{valiant14}, 
the $\chi^2$-test and the likelihood ratio test. The null is chosen to be uniform, and the alternate is either a dense or sparse perturbation of the null. The power of the tests are plotted against the $\ell_1$ distance between the null and alternate. Each point in the graph is an average over 1000 trials. Despite the high-dimensionality (i.e. $n = 200, d = 2000$) the tests have high-power, and perform comparably.}
\label{fig:unif}
\end{figure}
\end{center}
\begin{enumerate}
\item Figure~\ref{fig:unif} considers a high-dimensional setting where 
$n = 200, d = 2000$, the null distribution is uniform, and the alternate is either dense (perturbing each coordinate by a scaled Rademacher) or sparse (perturbing only two coordinates).

In each case we observe that all the tests perform comparably
indicating that a variety of tests are optimal around the uniform
distribution, a fact that we exploit in designing the max test. The test from \cite{valiant14} performs slightly
worse than others due to the Bonferroni correction from applying a
two-stage test.

\item Figure~\ref{fig:power} considers a power-law null where $p_0(i) \propto 1/i$. 
Again we take $n = 200, d = 2000$, and compare against
a dense and sparse alternative. In this setting, we choose the
sparse alternative to only perturb the first two coordinates of the
distribution.

We observe two notable effects. First, we see that when the
alternate is dense, the truncated $\chi^2$ test, although consistent
in the high-dimensional regime, is outperformed by the other tests
highlighting the need to study the local-minimax properties of
tests. Perhaps more surprising is that in the setting where the
alternate is sparse, the classical $\chi^2$ and likelihood ratio
tests can fail dramatically.
\end{enumerate}

\noindent 
The locally minimax test is remarkably robust
across simulation settings. However, it requires that we specify $\critradn$,
a drawback shared by the max test.  In
Appendix~\ref{app:adapt} we provide adaptive alternatives that
adapt to unknown $\critradn$.

\begin{center}
\begin{figure}
\begin{tabular}{cc}
\includegraphics[scale=0.4]{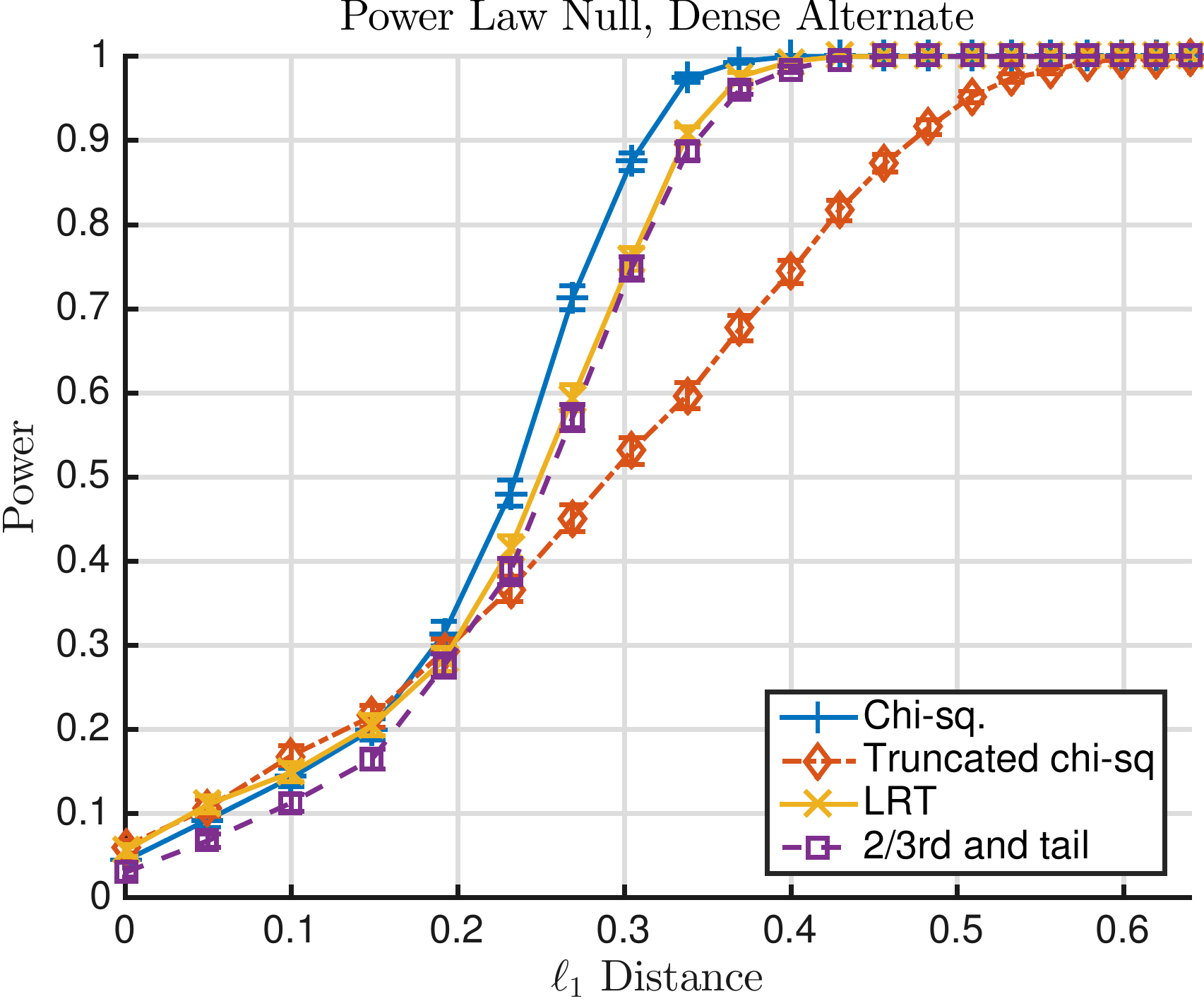} &  ~~~~~\includegraphics[scale=0.4]{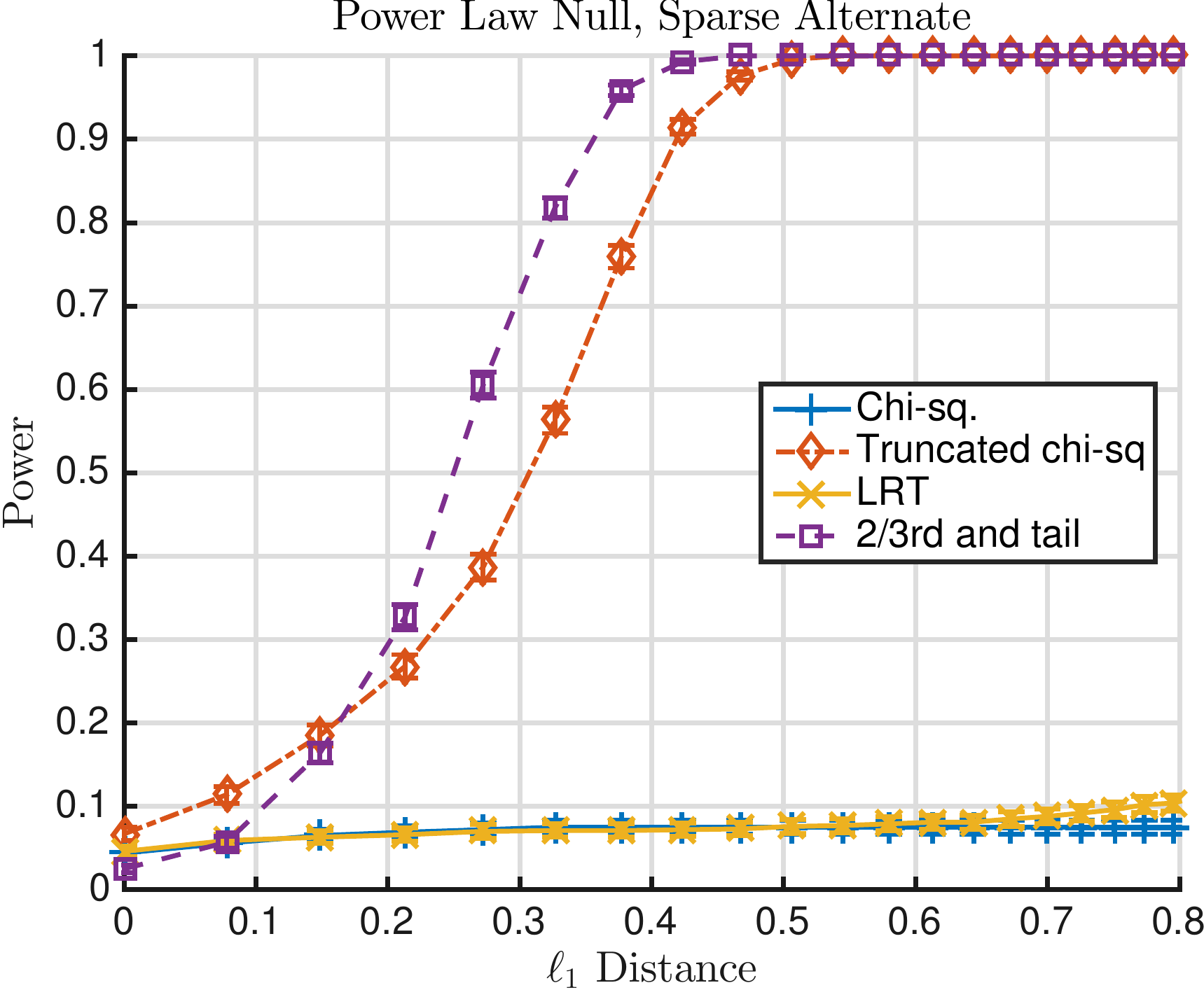}
\end{tabular}
\caption{A comparison between the truncated $\chi^2$ test, the 2/3rd + tail test \cite{valiant14}, 
the $\chi^2$-test and the likelihood ratio test. The null is chosen to be a power law, and the alternate is either a dense or sparse perturbation of the null. The power of the tests are plotted against the $\ell_1$ distance between the null and alternate. Each point in the graph is an average over 1000 trials. The truncated $\chi^2$ test despite being globally minimax optimal, can perform poorly for any particular fixed null. The $\chi^2$ and likelihood ratio tests can fail to be consistent even when $\critradn$ is quite large, and $n \gg \sqrt{d}$. }
\label{fig:power}
\end{figure}
\end{center}

\section{Testing Lipschitz Densities}
\label{sec:lipschitz}
In this section, we focus our attention on the Lipschitz testing problem~\eqref{test:lip}. 
As is standard in non-parametric problems, throughout this section, we treat the dimension $d$ as a fixed (universal) constant.  Our emphasis is on understanding the local critical radius while making minimal assumptions.
In contrast to past work, we do not assume that the null is uniform or even that its support is compact. 
We would like to be able to detect more subtle deviations from the null as the sample size gets large and hence we do not assume that the Lipschitz parameter $\lipcons$ is fixed as $n$ grows.

 The classical method, due to \citet{ingster94,ingster1997adaptive} to constructing lower and upper bounds on the critical radius, is based on binning the domain of the density. In particular, upper bounds were obtained by considering $\chi^2$ tests applied to the multinomial that results from binning the null distribution. 
 Ingster focused on the case when the null distribution $P_0$ was taken to be uniform on $[0,1]$, noting that the testing
problem for a general null distribution could be ``reduced'' to testing uniformity 
by modifying the observations via the quantile transformation corresponding to the null distribution $P_0$ (see also \cite{gine15}). We emphasize that such a reduction \emph{alters the smoothness class} tailoring it to the null distribution $P_0$. The quantile transformation forces the deviations from the null distribution to be more smooth in regions where $P_0$ is small and less smooth where $P_0$ is large, i.e. we need to re-interpret smoothness of the alternative density $p$
as an assumption about the function $p(F_0^{-1}(t)),$ where $F_0^{-1}$ is the quantile function of the null distribution $P_0$. We find this assumption to be unnatural and instead aim to directly test the hypotheses in~\eqref{test:lip}.

We begin with some high-level intuition for our upper and lower bounds.
\begin{itemize}
\item {\bf Upper bounding the critical radius: } 
The strategy of binning domain of $p_0$, and then testing the resulting multinomial against an appropriate $\ell_1$ neighborhood using a locally minimax test is natural even when $p_0$ is not uniform. However, there is considerable flexibility in how precisely to bin the domain of $p_0$. Essentially, the only constraint in the choice of bin-widths is that the approximation error (of approximating the density by its piecewise constant, histogram approximation) is sufficiently well-controlled.
When the null is not uniform the choice of fixed bin-widths is arbitrary and as we will see, sub-optimal. A bulk of the technical effort in constructing our optimal tests is then in determining the optimal inhomogenous, spatially adaptive partition of the domain in order to apply a multinomial test.
\item {\bf Lower bounding the critical radius: } At a high-level the construction of Ingster is similar to standard lower bounds in non-parametric problems. Roughly, we create a collection of possible alternate densities, 
by evenly partitioning the domain of $p_0$, and then perturbing each cell of the partition by adding or subtracting a small (sufficiently smooth) bump.  We then analyze the optimal likelihood ratio test for the (simple versus simple) testing problem of distinguishing $p_0$ from a uniform mixture of the set of possible alternate densities. 
We observe that when $p_0$ is not uniform once again creating a fixed bin-width partition is not optimal. The optimal choice of bin-widths is to choose larger bin-widths when $p_0$ is large and smaller bin-widths when $p_0$ is small. Intuitively, this choice allows us to perturb the null distribution $p_0$ more when the density is large, without violating the constraint that the alternate distribution remain sufficiently smooth. Once again, we create an inhomogenous, spatially adaptive partition of the domain, and then use this partition to construct the optimal perturbation of the null.
\end{itemize}
Define,
\begin{align}
\gamma := \frac{2}{3+d},
\end{align}
and for any $0 \leq \sigma \leq 1$ denote the collection of sets of probability mass at least $1 - \sigma$ as $\mathcal{B}_{\sigma}$, i.e. ${\cal B}_{\sigma} := \{ B:\ P_0(B) \geq 1-\sigma \}.$
Define the functional, 
\begin{align}
\label{eqn:tfuncdef}
T_{\sigma}(p_0) := \inf_{B \in {\cal B}_\sigma}  \left(
\int_{B} p_0^\gamma(x) dx\right)^{1/\gamma}. 
\end{align}
We refer to this as the truncated $T$-functional\footnote{Although the set $B$ that achieves the minimum in the definition 
of $T_{\sigma}(p_0)$ need not be unique, the functional itself is well-defined.}.
The functional $T_{\sigma}(p_0)$ is the analog of the functional $V_{\sigma}(p_0)$ in~\eqref{eqn:vfunc}, and roughly characterizes the local critical radius. We return to study this functional in light of several examples, in Section~\ref{sec:ex} (and Appendix~\ref{app:tfunc}).

In constructing lower bounds 
we will assume that the null density lies in the interior of 
the Lipschitz ball, i.e. we assume that for some constant $0 \leq \intconst < 1$, we 
have that, $p_0 \in \lipclass(\intconst \lipcons).$
This assumption avoids certain technical issues that arise in creating 
perturbations of the null density
when it lies on the boundary of the Lipschitz ball.

Finally, we define for two universal constants $C \geq c > 0$ (that are explicit in our proofs) the upper and lower critical radii:
\begin{align}
\label{eqn:liprad}
v_n(p_0) = \Big(  \frac{ \lipcons^{d/2} \tfunc{C v_n(p_0)}{p_0}}{n} \Big)^{\frac{2}{4+d}},\ \
w_n(p_0) = \Big(  \frac{ \lipcons^{d/2}\tfunc{c w_n(p_0)}{p_0}}{n} \Big)^{\frac{2}{4+d}}.
\end{align}
With these preliminaries in place we now state our main result on testing Lipschitz densities. We let
$c, C > 0$ denote two positive universal constants (different from the ones above). 
\begin{theorem}
\label{thm:main}
The local critical radius $\critradn(\nulldens,\lipclass(\lipcons))$ 
for testing Lipschitz densities is upper bounded as:
\begin{align}
\label{eqn:minimaxcritradlip}
\critradn(\nulldens,\lipclass(\lipcons)) \leq C w_n(p_0).
\end{align}
Furthermore, if for some constant $0 \leq \intconst < 1$ we 
have that, $p_0 \in \lipclass(\intconst \lipcons),$ then the critical radius is lower bounded as
\begin{align}
\label{eqn:minimaxcritradlipup}
c v_n(p_0) \leq \critradn(\nulldens,\lipclass(\lipcons)).
\end{align}
\end{theorem}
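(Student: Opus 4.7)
The proof decomposes into upper- and lower-critical-radius bounds; both pivot on the same spatially adaptive partitioning scheme of Section~\ref{sec:part}. Throughout I write $c, C$ for absolute constants that may change line to line.

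Upper bound. Set $\epsilon := w_n(p_0)$ and pick a bulk set $B \in \mathcal{B}_{c\epsilon}$ nearly attaining the infimum in $\tfunc{c\epsilon}{p_0}$. On $B$ I would partition into cells $A_1,\ldots, A_N$ with prescribed local widths
\begin{equation*}
h_i \;\asymp\; \frac{\epsilon\, p_0(x_i)^{\gamma}}{\lipcons \int_B p_0^{\gamma}(y)\, dy},
\end{equation*}
a choice arising from Lagrange-optimizing the induced multinomial's $2/3$-norm $\sum_i p_0(A_i)^{2/3}$ against the Lipschitz approximation budget $\sum_i \lipcons h_i |A_i|$; on the complement $B^c$ I would use a single tail cell. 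Applying the $2/3$-rd + tail test of Theorem~\ref{thm:valup} to the induced multinomial closes the argument via two facts: (i) the Lipschitz hypothesis controls the $\ell_1$ approximation error of the piecewise constant histogram on $\ppartition$ by $\lesssim \epsilon$, so any alternative at $\ell_1$-distance $\geq \epsilon$ from $p_0$ induces a binned multinomial at $\ell_1$-distance $\geq \epsilon/2$ from the binned null; and (ii) a direct computation using $\gamma = 2/(d+3)$ and $(\int_B p_0^\gamma)^{1/\gamma} = \tfunc{c\epsilon}{p_0}$ shows the truncated $2/3$-norm of the binned null equals $(\lipcons/\epsilon)^{d/2}\, \tfunc{c\epsilon}{p_0}$ up to constants. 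Substituting into the multinomial critical equation of Theorem~\ref{thm:valup} yields $\epsilon^{(d+4)/2} \asymp \lipcons^{d/2}\tfunc{c\epsilon}{p_0}/n$, which is precisely the fixed-point equation defining $w_n(p_0)$.

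Lower bound. I would use Ingster's mixture-of-bumps perturbation together with Le Cam's two-point lemma. Set $\epsilon := v_n(p_0)$, let $B$ nearly attain $\tfunc{C\epsilon}{p_0}$, build the same adaptive partition $\{A_i\}_{i=1}^N$ on $B$, and for each sign vector $\theta \in \{-1,+1\}^N$ define
\begin{equation*}
p_\theta(x) \;=\; p_0(x) + \sum_{i=1}^N \theta_i \xi_i(x),
\end{equation*}
where $\xi_i$ is a smooth mean-zero Lipschitz bump supported in $A_i$ with $\|\xi_i\|_\infty \asymp (1-\intconst)\lipcons h_i$. The hypothesis $p_0 \in \lipclass(\intconst \lipcons)$ with $\intconst < 1$ provides exactly the slack that keeps each $p_\theta$ both non-negative and $\lipcons$-Lipschitz, and a direct volume computation gives $\|p_\theta - p_0\|_1 \gtrsim \epsilon$. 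For the uniform mixture $\bar P$ over $\theta$, the disjointness of bump supports and independence of the signs factorize the Poissonized $\chi^2$-divergence as $\prod_i \cosh\bigl(n \int \xi_i^2/p_0\bigr) - 1$, and the identity $(d+4)\gamma - 2 = \gamma$ collapses $\sum_i (\int \xi_i^2/p_0)^2$ down to a constant multiple of $\epsilon^{d+4}/(\lipcons^{d} \tfunc{C\epsilon}{p_0}^2)$. Hence $\chi^2(\bar P^n, P_0^n) \lesssim n^2\epsilon^{d+4}/(\lipcons^d \tfunc{C\epsilon}{p_0}^2) = O(1)$ by the equation defining $v_n(p_0)$, and Le Cam's lemma precludes any level-$\alpha$ test from attaining Type II error below $1/2$.

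Main obstacle. The delicate technical core is the construction and error accounting of the adaptive partition itself: prescribed widths $h(x)\propto p_0(x)^\gamma$ must be realized up to discretization while uniformly absorbing every approximation error into a constant fraction of $\epsilon$, since the exponent $2/(d+4)$ is sensitive to loose factors; in the lower bound the bumps $\xi_i$ must simultaneously respect the global Lipschitz bound across shared cell boundaries while remaining statistically independent across cells. It is precisely the shared Lagrangian optimum $h_i \propto p_0^\gamma/\lipcons$ on both sides that forces the same truncated functional $\tfunc{\cdot}{p_0}$ to govern both $v_n$ and $w_n$, so that the upper and lower critical radii match up to universal constants outside pathological instances.
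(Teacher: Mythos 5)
Your proposal follows the same route as the paper: adaptively bin with widths driven by $p_0^{\gamma}$, reduce to a multinomial tested with the $2/3$-rd $+$ tail test of Theorem~\ref{thm:valup} for the upper bound, and build a Rademacher-signed mixture of smooth bumps on the same partition and bound the second moment of the Poissonized likelihood ratio for the lower bound. The fixed-point identity $\epsilon^{(d+4)/2}\asymp \lipcons^{d/2}\tfunc{c\epsilon}{p_0}/n$ and the role of $p_0\in\lipclass(\intconst\lipcons)$ in keeping $p_\theta$ inside $\lipclass(\lipcons)$ are correctly identified, as is the factorization of the $\chi^2$-divergence over disjoint bump supports.

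One imprecision worth flagging: the cell width is not simply $h_i \asymp \epsilon p_0^\gamma(x_i)/(\lipcons\mu)$ but $h_i \asymp \min\{p_0(x_i)/\lipcons,\; \epsilon p_0^\gamma(x_i)/(\lipcons\mu)\}$. The $p_0/\lipcons$ cap is what guarantees multiplicative control $\sup_{A_i}p_0 \leq 2\inf_{A_i}p_0$, which you use implicitly both when identifying $P_0(A_i)\asymp p_0(x_i)h_i^d$ in the upper bound and when bounding $\int_{A_i}\psi_i^2/p_0 \lesssim 1/p_0(x_i)$ in the lower-bound $\chi^2$ computation. On the pruned bulk the second term dominates (because the pruning enforces a density lower bound), so your formula is correct \emph{a posteriori} there, but for the lower bound the unpruned partition is used and the min is not redundant. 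You do flag the partition as the delicate core, so this is a gap in exposition rather than in the approach.
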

\noindent {\bf Remarks: }
\begin{itemize}
\item A natural question of interest is to understand the worst-case rate for the critical radius, or
equivalently to understand the largest that the $T$-functional can be. Since the $T$-functional can be infinite if the support is unrestricted, we restrict our attention to Lipschitz densities with a bounded support $S$. In this case, letting $\mu(S)$ denote the Lebesgue measure of $S$ and using H\"{o}lder's inequality (see Appendix~\ref{app:tfunc}) we have that for any $\sigma > 0$,
\begin{align}
\label{eqn:claim}
T_{\sigma}(p_0) \leq (1 - \sigma) \mu(S)^{\frac{1-\gamma}{\gamma}}.
\end{align}
Up to constants involving $\gamma, \sigma$ this is attained when $p_0$ is uniform on the set $S$. In other words, the critical radius is maximal for testing the uniform density against a Lipschitz, $\ell_1$ neighborhood. In this case, we simply recover a generalization of the result of \cite{ingster94} for testing when $p_0$ is uniform on $[0,1]$.
\item The main discrepancy between the upper and lower bounds is in the truncation level, i.e. the upper and lower bounds depend on the functional $T_{\sigma}(p_0)$ for different values of the parameter $\sigma$. This is identical to the situation in Theorem~\ref{thm:val-val} for testing multinomials.
In most non-pathological examples this functional is stable with respect to constant factor discrepancies in the truncation level and consequently our upper and lower bounds are typically tight (see the examples in Section~\ref{sec:ex}). In the Supplementary Material (see Appendix~\ref{app:tfunc}) we formally study the stability of the $T$-functional. We provide general bounds and relate the stability of the $T$-functional to the stability of the level-sets of $p_0$.
\end{itemize}

The remainder of this section is organized as follows: we first consider various examples, calculate the $T$-functional and develop the consequences of Theorem~\ref{thm:main} for these examples. We then turn our attention to our adaptive binning, describing both a recursive partitioning algorithm for constructing it as well as developing some of its useful properties. Finally, we provide the body 
of our proof of Theorem~\ref{thm:main} and defer more technical aspects to the Supplementary Material. We conclude with a few illustrative simulations.

\subsection{Examples}
\label{sec:ex}
The result in Theorem~\ref{thm:main} provides a general characterization of the critical radius for testing any density $p_0$, against a Lipschitz, $\ell_1$ neighborhood. In this section we consider several concrete examples.
Although our theorem is more generally applicable, for ease of exposition we focus on the setting where $d = 1$, highlighting the variability of the $T$-functional and consequently of the critical radius as the null density is changed. Our examples have straightforward $d$-dimensional extensions.

When $d = 1$, we have that $\gamma = 1/2$ so the $T$-functional is simply:
\begin{align*}
T_{\sigma}(p_0) = \inf_{B \in {\cal B}_\sigma}  \left(
\int_{B} \sqrt{p_0(x)} dx\right)^{2},
\end{align*}
where $\mathcal{B}_{\sigma}$ is as before. Our interest in general is in the setting where $\sigma \rightarrow 0$ (which happens as $n \rightarrow \infty$), so in some examples we will simply calculate $T_0(p_0)$. In other examples however, the truncation at level $\sigma$ will play a crucial role and in those cases we will compute $T_\sigma(p_0)$.

\begin{example}[Uniform null]
Suppose that the null distribution $p_0$ is Uniform$[a,b]$ then,
\begin{align*}
T_0(p_0) = |b - a|.
\end{align*}
\end{example}

\begin{example}[Gaussian null]
Suppose that the null distribution $p_0$ is a Gaussian, i.e. for some $\nu > 0, \mu \in \mathbb{R}$, 
\begin{align*}
p_0(x) = \frac{1}{\sqrt{2\pi} \nu } \exp (-(x- \mu)^2/(2 \nu^2)).
\end{align*}
In this case, a simple calculation (see Appendix~\ref{app:ex}) shows that,
\begin{align*}
T_0(p_0) = (8\pi)^{1/2} \nu.
\end{align*}
\end{example}

\begin{example}[Beta null]
Suppose that the null density is a Beta distribution:
\begin{align*}
p_0(x) = \frac{\Gamma(\alpha + \beta)}{\Gamma(\alpha) \Gamma(\beta)} x^{\alpha - 1} x^{\beta - 1} =
\frac{1}{B(\alpha,\beta)} x^{\alpha - 1} x^{\beta - 1}, 
\end{align*}
where $\Gamma$ and $B$ denote the gamma and beta functions respectively.
It is easy to verify that,
\begin{align*}
T_0(p_0) = \left(\int_0^1 \sqrt{p_0(x)} dx\right)^2 = \frac{B^2((\alpha + 1)/2, (\beta + 1)/2)}{ B(\alpha,\beta) }.
\end{align*}
To get some sense of the behaviour of this functional, we consider the case when $\alpha = \beta = t \rightarrow \infty.$ In this case, we show (see Appendix~\ref{app:ex}) that for $t \geq 1$,
\begin{align*}
 \frac{\pi^2}{4e^4} t^{-1/2}  \leq T_0(p_0) \leq \frac{e^4}{4} t^{-1/2}.
\end{align*}
In particular, we have that $T_0(p_0) \asymp t^{-1/2}$. 



\end{example}

\noindent {\bf Remark: } 
\begin{itemize}
\item These examples illustrate that in the simplest settings when the density $p_0$ is close to uniform, the $T$-functional is roughly the effective support of $p_0$. In each of these cases, it is straightforward to verify that the truncation of the $T$-functional simply affects constants so that the critical radius scales as:
\begin{align*}
\critradn \asymp \left(\frac{\sqrt{\lipcons} T_0(p_0) }{n} \right)^{2/5},
\end{align*}
where $T_0(p_0)$ in each case scales as roughly the size of the $(1 - \critradn)$-support of the density $p_0$, i.e. as the Lebesgue measure of the smallest set that contains $(1 - \critradn)$ probability mass.
This motivates understanding the Lipschitz density with smallest effective support, and we consider this next.
\end{itemize}

\begin{example}[Spiky null]
Suppose that the null hypothesis is:
\begin{align*}
p_0(x) = \begin{cases}
\lipcons x & 0 \leq x \leq \frac{1}{\sqrt{\lipcons}} \\
\frac{2}{\sqrt{\lipcons}} - \lipcons x & \frac{1}{\sqrt{\lipcons}} \leq x \leq \frac{2}{\sqrt{\lipcons}} \\
0 & \text{otherwise},
\end{cases}
\end{align*}
then we have that $T_0(p_0) \asymp \frac{1}{\sqrt{\lipcons}}.$
\end{example}

\noindent {\bf Remark: } 
\begin{itemize}
\item For the spiky null distribution we obtain an extremely fast rate, i.e. we have that the critical radius $\critradn \asymp n^{- 2/5},$
and is independent of the Lipschitz parameter $\lipcons$ (although, we note that the null $p_0$ is more spiky as $\lipcons$ increases). 
This is the fastest rate we obtain for Lipschitz testing. 
In settings where the tail decay is slow, the truncation of the $T$-functional can be crucial 
and the rates can be much slower. We consider these examples next. 
\end{itemize}

\begin{example}[Cauchy distribution]
The mean zero, Cauchy
distribution with parameter $\alpha$ has pdf:
\begin{align*}
p_0(x) = \frac{1}{\pi \alpha} \frac{\alpha^2}{x^2 + \alpha^2}.
\end{align*}
As we show (see Appendix~\ref{app:ex}), the $T$-functional without truncation is infinite, i.e. 
$T_0(p_0) = \infty$. However, the truncated $T$-functional is finite. In the Supplementary Material
we show that for any $0 \leq \sigma \leq 0.5$ (recall that our interest is in cases where $\sigma \rightarrow 0$),
\begin{align*}
 \frac{4\alpha}{\pi}  \left[  \ln^2 \left( \frac{1}{\sigma}  \right)  \right] \leq T_{\sigma}(p_0) \leq  \frac{4\alpha}{\pi}  \left[  \ln^2 \left( \frac{2 e}{\pi \sigma}  \right)  \right],
\end{align*}
i.e. we have that $T_\sigma(p_0) \asymp \ln^2(1/\sigma)$.
\end{example}
\noindent {\bf Remark: } 
\begin{itemize}
\item When the null distribution is Cauchy as above, we note that the rate for the critical radius
is no longer the typical $\critradn \asymp n^{-2/5},$ even when the other problem specific parameters ($\lipcons$ and the Cauchy parameter $\alpha$) are held fixed. We instead obtain
a slower $\critradn \asymp (n/\log^2 n)^{-2/5}$ rate. Our final example, shows that we can obtain an entire spectrum of slower rates.
\end{itemize}

\begin{example}[Pareto null] For a fixed $x_0 > 0$ and for $0 < \alpha < 1$, suppose that the null distribution is
\begin{align*}
p_0(x) = \begin{cases}
\frac{\alpha x_0^\alpha}{x^{\alpha + 1}}~~\text{for}~~ x \geq x_0, \\
0~~\text{for}~~x < x_0. \\
\end{cases}
\end{align*}
This distribution for $0 < \alpha < 1$ has thicker tails than the Cauchy distribution. The $T$-functional without truncation is infinite, i.e. $T_0(p_0) = \infty$, and we can further show that (see Appendix~\ref{app:ex}):
\begin{align*}
 \frac{4 \alpha x_0}{(1 - \alpha)^2} \left( \sigma^{- \frac{1 - \alpha}{2\alpha}} - 1\right)^2 = T_\sigma(p_0) \leq  \frac{4 \alpha x_0}{(1 - \alpha)^2} \sigma^{- \frac{1 - \alpha}{\alpha}}.
\end{align*}
In the regime of interest when $\sigma \rightarrow 0$, we have that $T_\sigma(p_0) \sim \sigma^{- \frac{1 - \alpha}{\alpha}}.$
\end{example}
\noindent {\bf Remark: } 
\begin{itemize}
\item We observe that once again, the critical radius no longer follows the typical rate: $\critradn \asymp n^{-2/5}.$ Instead we obtain the rate, $\critradn \asymp n^{-2 \alpha/(2 + 3 \alpha)},$ and indeed have much slower rates as $\alpha \rightarrow 0$, indicating the difficulty of testing heavy-tailed distributions against a Lipschitz, $\ell_1$ neighborhood.
\end{itemize}
We conclude this section by emphasizing the value of the local minimax perspective and of studying the goodness-of-fit problem beyond the uniform null. We are able to provide a sharp characterization of the critical radius for a broad class of interesting examples, and we obtain faster (than at uniform) rates when the null is spiky and non-standard rates in cases when the null is heavy-tailed.

\subsection{A recursive partitioning scheme}
\label{sec:part}
At the heart of our upper and lower bounds are spatially adaptive partitions of the domain of $p_0$. The partitions used in our upper and lower bounds are similar but not identical.  
In this section, we describe an algorithm for producing the desired partitions and then briefly describe some of the main properties of the partition that we leverage in our upper and lower bounds.

We begin by describing the desiderata for the partition from the perspective of the upper bound.
Our goal is to construct a test for the hypotheses in~\eqref{test:lip}, and we do so by constructing
a partition (consisting of $N+1$ cells) 
$\{A_1,\ldots,A_{\smallN},A_{\infty}\}$ of $\mathbb{R}^d$. Each cell $A_i$ for $i \in \{1,\ldots,\smallN\}$ will be a cube, while the cell $A_{\infty}$ will be arbitrary but will have small total probability content. We let,
\begin{align}
\label{eqn:k}
K := \bigcup_{i=1}^{\smallN} A_i.
\end{align}
We form the multinomial corresponding to the partition
$\{P_0(A_1),\ldots,P_0(A_{\smallN}),P_0(A_{\infty})\},$ where $P_0(A_i) = \int_{A_i} p_0(x) dx$.
We then test this multinomial using the counts of the number of samples falling in each cell of the partition.

\noindent {\bf Requirement 1: } A basic requirement of the partition is that it must ensure that a density $p$ that is 
at least $\critradn$ far 
away in $\ell_1$ distance from $p_0$ should remain roughly $\critradn$ away from $p_0$ when converted to a multinomial. Formally, for any $p$ such that $\|p - p_0\|_1 \geq \critradn, p \in \lipclass(\lipcons)$ we require that for some small constant $c > 0$,
\begin{align}
\label{eqn:prop1}
\sum_{i=1}^N | P_0(A_i) - P(A_i) | + | P_0(A_{\infty}) - P(A_{\infty})| \geq c \critradn.
\end{align}
Of course, there are several ways to ensure this condition is met. In particular, supposing that we restrict attention to densities supported on $[0,1]$ then it suffices for instance to choose roughly $\lipcons/\critradn$ even-width bins. This is precisely the partition considered in prior work \cite{ingster94,ingster1997adaptive,ariascastro16}. When we do not restrict attention to compactly supported, uniform densities an even-width partition is no longer optimal and a careful optimization of the upper and lower bounds with respect to the partition yields the optimal choice. 
The optimal partition has bin-widths that are roughly taken
proportional to $p_0^{\gamma}(x)$, where the constant of proportionality is chosen to ensure
that the condition in~\eqref{eqn:prop1} is satisfied. Precisely determining the constant of proportionality turns out to be quite subtle so we defer a discussion of this to the end of this section. 



\noindent {\bf Requirement 2: } A second requirement that arises in both our upper and lower bounds 
is that the cells of our partition (except $A_{\infty}$) are not chosen too wide. In particular, we must choose the cells small enough to ensure that the density is roughly constant on each cell, i.e. on each cell we need that for any $i \in \{1,\ldots,\smallN\}$,
\begin{align}
\frac{\sup_{x \in A_i} p_0(x)}{\inf_{x \in A_i} p_0(x)} \leq 3.
\end{align}
Using the Lipschitz property of $p_0$, this condition is satisfied if any point $x$ is in a cell of diameter at most $p_0(x)/(2\lipcons)$.


Taken together the first two requirements suggest that we need to create a partition such that: for every point $x \in K$ the diameter of the cell $A$ containing the point $x$, should be roughly,
\begin{align*}
\text{diam}(A) \approx \min \left\{\theta_1 p_0(x), \theta_2 p_0^{\gamma}(x) \right\},
\end{align*}
where $\theta_1$ is to be chosen to be smaller than $1/(2\lipcons)$, and $\theta_2$ is chosen to ensure that Requirement 1 is satisfied.

Algorithm~\ref{alg:one} constructively establishes the existence of a partition satisfying these requirements. The upper and lower bounds use this algorithm with slightly different parameters.
The key idea is to recursively partition cells that are too large by halving each side. This is illustrated in Figure~\ref{fig:happy_part}. The proof of correctness of the algorithm uses the smoothness of $p_0$ in an essential fashion. Indeed, were the density $p_0$ not sufficiently smooth then such a partition would likely not exist.

In order to ensure that the algorithm has a finite termination, we choose two parameters $a,b \ll \critradn$ (these are chosen sufficiently small to not affect subsequent results): 
\begin{itemize}
\item We restrict our attention to the $a$-effective support of $p_0$, i.e. we define $S_a$ to be the
smallest cube
centered at the mean of $p_0$ such that,
$P_0(S_a) \geq 1 - a.$
We begin with $A_{\infty} = S_{a}^c$.
\item If the density in any cell is sufficiently small we do not split the cell further, i.e. for a parameter $b$, if $\sup_{x \in A} p_0(x) \leq b/\text{vol}(S_a)$ then we do not split it, rather we add it to $A_{\infty}$. By construction, such cells have total probability content at most $b$. 
\end{itemize}

For each cube $A_i$ for $i \in \{1,\ldots,\bigN\}$ we let $x_i$ denote its centroid, and we let $\bigN$ denote the number of cubes created by Algorithm~\ref{alg:one}. \\
\begin{algorithm}[H]
\label{alg:one}
\caption{Adaptive Partition}
\begin{enumerate}[leftmargin=0cm]
\item {\bf Input: }  Parameters $\theta_1,\theta_2, a,b.$

\item Set $A_\infty = \emptyset$ and $A_1 = S_{a}$.

\item For each cube $A_i$ do:
\begin{itemize} 
\item If 
\begin{equation}\label{eq:stop1}
\sup_{x \in A_i} p_0(x_i) \leq \frac{b}{ \text{vol}(S_{a})},
\end{equation}
then remove $A_i$ from the partition and let $A_\infty = A_\infty \cup A_i$. 
\item If 
\begin{equation}\label{eq:stop2}
\text{diam}(A_i) \leq \min 
\left\{ \theta_1 \nul(x_i), \theta_2 \nul^\gamma(x_i) \right\},
\end{equation}
then do nothing to $A_i$.
\item If $A_i$ fails to satisfy
(\ref{eq:stop1}) or (\ref{eq:stop2}) then
replace $A_i$ by a
set of $2^d$ cubes that are obtained by halving the
original $A_i$ along each of its axes. 
\end{itemize}

\item If no cubes are split or removed, STOP. Else go to step 3.

\item {\bf Output: } Partition ${\cal P}=\{A_1,\ldots,A_{\bigN},A_{\infty}\}$.
\end{enumerate}
\end{algorithm}

\begin{center}
\begin{figure}
\begin{subfigure}{0.5\textwidth} 
\includegraphics[scale=0.44]{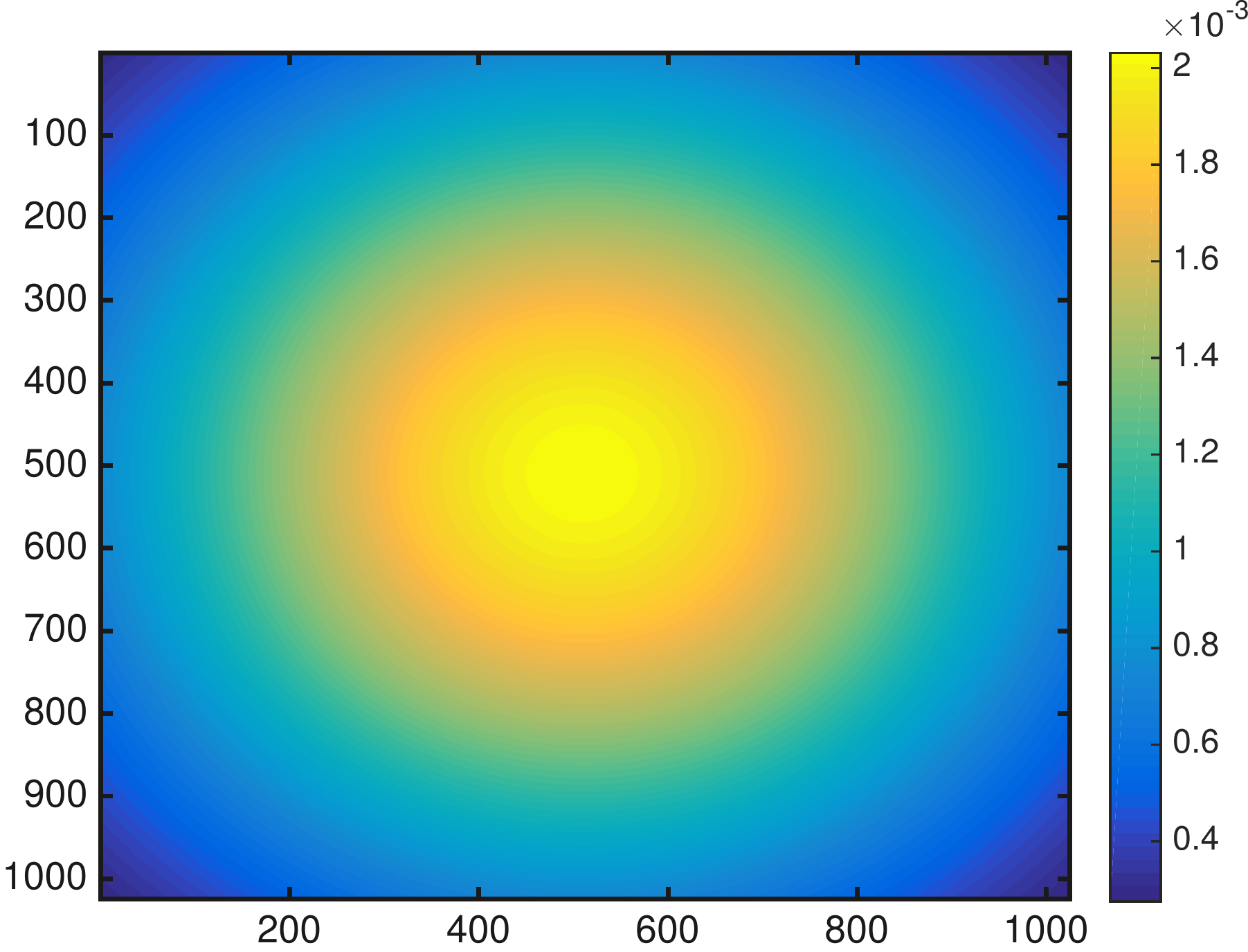}
\caption{\phantom{}}
\end{subfigure}
\hspace{0.1cm}
\begin{subfigure}{0.5\textwidth} 
\includegraphics[scale=0.44]{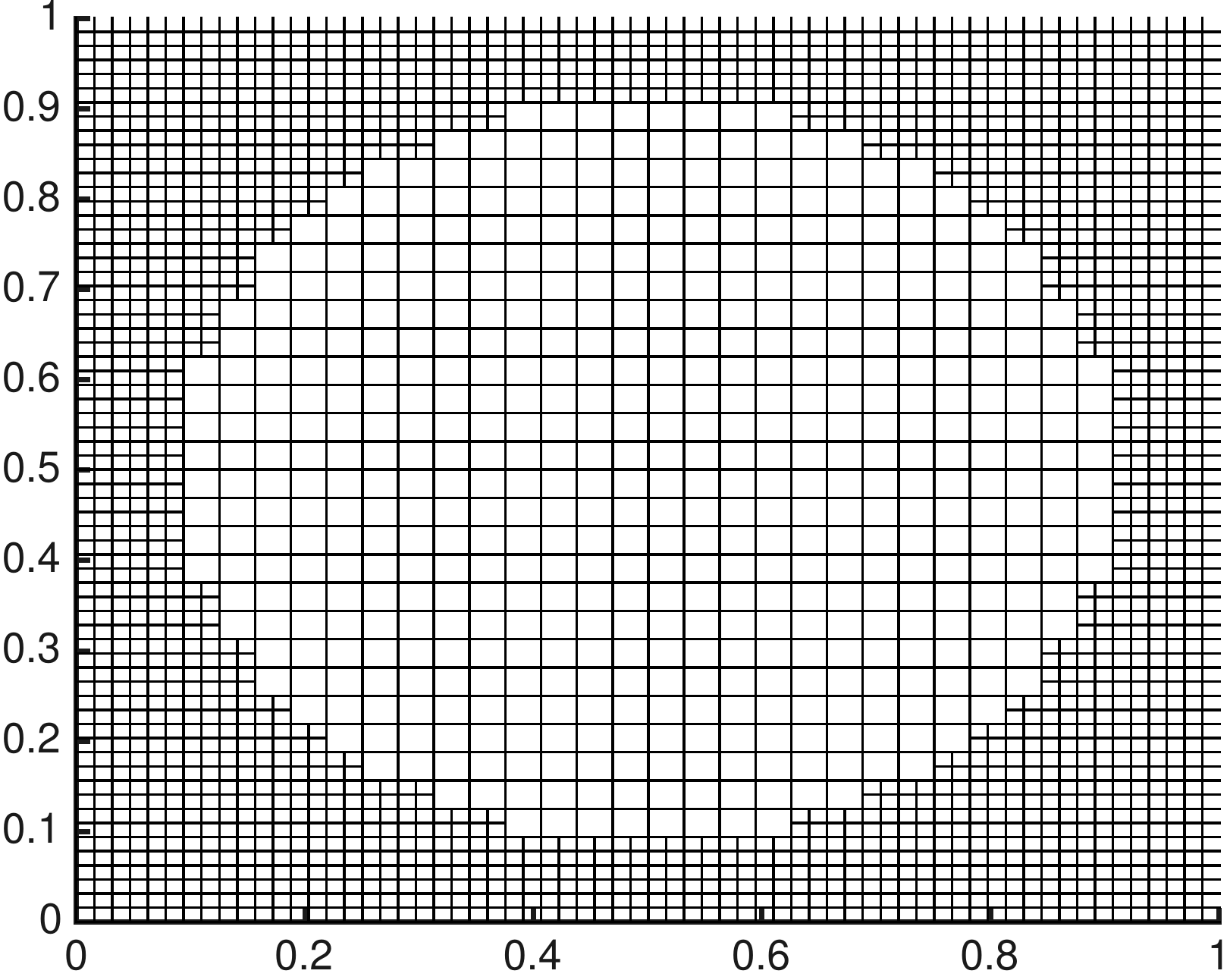}
\caption{\phantom{}}
\end{subfigure}
\caption{(a) A density $p_0$ on $[0,1]^2$ evaluated on a $1000 \times 1000$ grid. (b) The corresponding spatially adaptive partition $\mathcal{P}$ produced by Algorithm~\ref{alg:one}. Cells of the partition are larger in regions where the density $p_0$ is higher. }
\label{fig:happy_part}
\end{figure}
\end{center}

\noindent {\bf Requirement 3: } The final major requirement
is two-fold: (1) we require that
the $\gamma$-norm of the density 
over the support of the partition should be upper bounded by the truncated $T$-functional,
and (2) that the density over the cells of the partition be sufficiently large.
This necessitates a further pruning of the partition, 
where we order cells by their probability content and 
successively 
eliminate (adding them to $A_{\infty}$)
cells of low probability until we have eliminated mass that is close to the desired truncation level.
This is accomplished by Algorithm~\ref{alg:two}.

\begin{algorithm}[H]
\label{alg:two}
\caption{Prune Partition}
\begin{enumerate}[leftmargin=0cm]
\item {\bf Input: }  
Unpruned partition ${\cal P}=\{A_1,\ldots,A_{\bigN}, A_{\infty}\}$
and a target pruning level $c.$ Without loss of generality 
we assume $P_0(A_1) \geq P_0(A_2) \geq \ldots \geq P_0(A_{\bigN}).$  \\

\item For any $j \in \{1,\ldots,\bigN\}$ let $\mathcal{Q}(j) = \sum_{i = j}^{\bigN} P_0(A_i)$.
Let $j^*$ denote the smallest positive integer such that, $\mathcal{Q}(j^*) \leq c.$ 

\item If $\mathcal{Q}(j^*) \geq c/5$:
\begin{itemize}
\item Set $N = j^* - 1,$ and $A_{\infty} = A_{\infty} \cup A_{j^*} \cup \ldots \cup A_{\bigN}$.
\end{itemize}

\item If $\mathcal{Q}(j^*) \leq c/5$:
\begin{itemize}
\item Set $N = j^* - 1, \alpha = \min\{c/(5P_0(A_N)), 1/5\},$ and $A_{\infty} = A_{\infty} \cup A_{j^*} \cup \ldots \cup A_{\bigN}$.
\item $A_N$ is a cube, i.e. for some $\delta > 0$, $A_N = [a_1, a_1+\delta] \times \cdots \times [a_d,a_d+\delta]$. Let $D_1 = [a_1, (1-\alpha)(a_1+\delta)] \times \cdots \times [a_d,(1-\alpha)(a_d+\delta)]$ and $D_2 = A_N - D_1$. Set: $A_N = D_1$ and $A_\infty = A_\infty \cup D_2$.
\end{itemize}

\item {\bf Output: }  ${\cal P}^\dagger = \{A_1,\ldots, A_N, A_\infty\}.$
\end{enumerate}

%
%
%
%
%
\end{algorithm}

It remains to specify a precise choice for the parameter $\theta_2$. We do so indirectly by defining
a function $\mu: \mathbb{R} \mapsto \mathbb{R}$ that is closely related to the truncated $T$-functional. 
For $x \in \mathbb{R}$ we define $\mu(x)$ as the smallest positive number that satisfies the equation:
\begin{equation}
\label{eqn:mudef}
\critradn = \int_{\mathbb{R}^d} \min \left\{ \frac{p_0(y)}{x}, \frac{\critradn p_0(y)^{\gamma}}{ \mu(x)} \right\} dy. 
\end{equation}
If $x < 1/\critradn$ then we obtain a finite value for $\mu(x)$, otherwise we take $\mu(x) = \infty$.
The following result, relates $\mu$ to the truncated $T$-functional.
\begin{lemma}
\label{lem:mubound}
For any $0 \leq x < 1/\critradn,$
\begin{equation}
T_{x \critradn}^\gamma(p_0) \leq \mu(x) \leq 2  T^\gamma_{x \critradn/2}(p_0). 
\end{equation}
\end{lemma}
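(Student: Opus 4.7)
The plan is to extract the threshold structure of the defining equation for $\mu(x)$ and then derive both bounds by comparing to minimizers of the $T$-functional.

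First I would rewrite the defining integral by identifying where the two terms of the pointwise minimum cross. The inequality $p_0(y)/x \leq \critradn p_0(y)^\gamma/\mu$ is equivalent to $p_0(y) \leq \tau_\mu$ with $\tau_\mu := (x\critradn/\mu)^{1/(1-\gamma)}$. Writing $B_\mu := \{y : p_0(y) > \tau_\mu\}$ and $\sigma_\mu := P_0(B_\mu^c)$, the defining equation becomes
\begin{equation*}
\critradn \;=\; \frac{\sigma_\mu}{x} \;+\; \frac{\critradn}{\mu}\int_{B_\mu} p_0^\gamma(y)\,dy.
\end{equation*}
Since $\gamma<1$, a standard Lagrangian argument shows that for any $\sigma$, the superlevel sets of $p_0$ (appropriately split at the boundary of the level set) are exactly the minimizers of $B \mapsto \int_B p_0^\gamma$ subject to $P_0(B)\geq 1-\sigma$: the integrand $p_0^\gamma - \lambda p_0$ is negative precisely where $p_0 > \lambda^{1/(1-\gamma)}$. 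Applied to $B_\mu$ with $\sigma=\sigma_\mu$, this gives $\int_{B_\mu} p_0^\gamma = T^\gamma_{\sigma_\mu}(p_0)$, so the defining equation simplifies to
\begin{equation*}
\frac{T^\gamma_{\sigma_\mu}(p_0)}{\mu} \;=\; 1 - \frac{\sigma_\mu}{x\critradn}.
\end{equation*}

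For the lower bound, both terms in the defining equation are nonnegative, so $\sigma_\mu/x \leq \critradn$, i.e. $\sigma_\mu \leq x\critradn$. Because $T^\gamma_\sigma(p_0)$ is non-increasing in $\sigma$ (enlarging $\sigma$ enlarges the infimum class $\mathcal{B}_\sigma$), this yields $T^\gamma_{\sigma_\mu}(p_0)\geq T^\gamma_{x\critradn}(p_0)$. Combined with the fact that the right side of the simplified equation is at most $1$, I obtain $\mu \geq T^\gamma_{\sigma_\mu}(p_0)\geq T^\gamma_{x\critradn}(p_0)$.

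For the upper bound I would use a candidate-comparison argument. Let $F(\mu) := \int \min\{p_0/x,\,\critradn p_0^\gamma/\mu\}$; it is continuous and strictly decreasing in $\mu$ with $F(0^+)=1/x > \critradn$ and $F(\infty)=0$, so $\mu$ is the unique positive solution of $F(\mu)=\critradn$, and in particular $\mu\leq\tilde\mu$ whenever $F(\tilde\mu)\leq\critradn$. Set $\tilde\mu := 2\,T^\gamma_{x\critradn/2}(p_0)$ and let $B^*$ be (a near-)minimizer in the definition of $T_{x\critradn/2}(p_0)$, so that $P_0((B^*)^c)\leq x\critradn/2$ and $\int_{B^*} p_0^\gamma = T^\gamma_{x\critradn/2}(p_0)$. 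Bounding the pointwise minimum by the first term on $(B^*)^c$ and by the second on $B^*$,
\begin{equation*}
F(\tilde\mu) \;\leq\; \frac{P_0((B^*)^c)}{x} + \frac{\critradn}{\tilde\mu}\int_{B^*} p_0^\gamma \;\leq\; \frac{\critradn}{2} + \frac{\critradn}{2\,T^\gamma_{x\critradn/2}(p_0)}\,T^\gamma_{x\critradn/2}(p_0) \;=\; \critradn,
\end{equation*}
which gives $\mu\leq 2\,T^\gamma_{x\critradn/2}(p_0)$.

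The main obstacle is the structural step showing that superlevel sets of $p_0$ are the minimizers of the constrained $T$-functional; everything else is bookkeeping. A minor technical subtlety is that an exact minimizer in $\mathcal{B}_\sigma$ might not exist when $p_0$ has atoms on the level set $\{p_0 = \tau_\mu\}$, but this is handled by splitting a boundary piece of the level set into $B$ and $B^c$ so that $P_0(B^c)$ equals $\sigma_\mu$ exactly, or by taking a sequence of near-minimizers and passing to the limit; the resulting bounds on $\mu$ are unaffected.
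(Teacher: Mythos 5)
Your proof is correct and follows essentially the same route as the paper's. The upper bound is substantively identical: plug the (near-)minimizer $B^*$ from the definition of $T_{x\critradn/2}$ into the pointwise minimum and rearrange; your monotonicity framing of $F(\mu)$ is just a clean way of phrasing the algebra.

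One small observation on the lower bound: you take a slightly longer path than needed. You invoke the structural fact that superlevel sets of $p_0$ are \emph{exact minimizers} of $B\mapsto\int_B p_0^\gamma$ over $\mathcal{B}_\sigma$, in order to write $\int_{B_\mu}p_0^\gamma = T^\gamma_{\sigma_\mu}(p_0)$ and obtain your simplified equation. That structural claim is true (it is essentially the paper's Lemma on level sets), but the lower bound only requires the much weaker observation that $B_\mu$ is \emph{feasible}: since $P_0(B_\mu)\geq 1-\sigma_\mu \geq 1-x\critradn$, the set $B_\mu$ lies in $\mathcal{B}_{x\critradn}$, so directly $\int_{B_\mu}p_0^\gamma \geq T^\gamma_{x\critradn}(p_0)$ by definition of $T$ as an infimum. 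Coupled with $\critradn \geq \int_{B_\mu}\critradn p_0^\gamma/\mu$, this gives $\mu \geq T^\gamma_{x\critradn}(p_0)$ in one step. This sidesteps the minimizer characterization (and the atom/boundary-splitting caveat you flag in your last paragraph), which is what the paper does.
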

\noindent With the definition of $\mu$ in place, we now state our main result regarding the partitions produced
by Algorithms~\ref{alg:one} and~\ref{alg:two}.
We let $\mathcal{P}$ denote the unpruned partition obtained from Algorithm~\ref{alg:one} and $\mathcal{P}^{\dagger}$ denote the pruned partition obtained from Algorithm~\ref{alg:two}.
For each cell $A_i$ we denote its centroid by $x_i$.
We have the following result summarizing some of the important properties of $\mathcal{P}$ and $\mathcal{P}^{\dagger}$.
\begin{lemma}
\label{lem:uppermain}
Suppose we choose, $\theta_1 =  1/(2\lipcons), \theta_2 = \critradn/(8 \lipcons \mu(1/4)), a = b = \critradn/1024, c = \critradn/512$, 
then the partition $\mathcal{P}^{\dagger}$ satisfies the following properties:
\begin{enumerate}
\item~[Diameter control] The partition has the property that,
\begin{align}
\label{eqn:prop1f}
\frac{1}{5} \min \left\{ \theta_1p_0(x_i), \theta_2 p_0^{\gamma}(x_i) \right\} \leq 
{\rm diam}(A_i) \leq 
\min \left\{ \theta_1p_0(x_i), \theta_2 p_0^{\gamma}(x_i) \right\}. 
\end{align}
\item~[Multiplicative control] The density is multiplicatively controlled on each cell, i.e.
for $i \in \{1,\ldots,\smallN\}$ we have that,
\begin{align}
\label{eqn:prop2f}
\frac{\sup_{x \in A_i} p_0(x)}{\inf_{x \in A_i} p_0(x)} \leq 2.
\end{align}
\item~[Properties of $A_{\infty}$] The cell $A_{\infty}$ has probability content roughly $\critradn$, i.e.
\begin{align}
\label{eqn:prop6f}
\frac{\critradn}{2560} \leq P(A_{\infty}) \leq \frac{\critradn}{256}.
\end{align}
\item~[$\ell_1$ distance]  The partition preserves the $\ell_1$ distance, i.e. 
for any $p$ such that $\|p - p_0\|_1 \geq \critradn, p \in \lipclass(\lipcons)$,
\begin{align}
\label{eqn:prop3f}
\sum_{i=1}^N | P_0(A_i) - P(A_i) | + | P_0(A_{\infty}) - P(A_{\infty})| \geq \frac{\critradn}{8}.
\end{align}
\item~[Truncated $T$-functional] Recalling the definition of $K$ in~\eqref{eqn:k}, we have that,
\begin{align}
\label{eqn:prop4f}
\int_K p_0^{\gamma}(x) dx \leq T_{\critradn/5120}^{\gamma}(p_0).
\end{align}
\item~[Density Lower Bound] The density over $K$ is lower bounded as:
\begin{align}
\label{eqn:prop5f}
\inf_{x \in K} p_0(x) \geq \left(\frac{\critradn}{5120 \mu(1/5120)}\right)^{1/(1-\gamma)}. 
\end{align}
\end{enumerate}
Furthermore, for any choice of the parameter $\theta_2$ the unpruned partition $\mathcal{P}$ 
of Algorithm~\ref{alg:one} satisfies~\eqref{eqn:prop1f} with the constant $5$ sharpened to $4$,~\eqref{eqn:prop2f} and the upper bound in~\eqref{eqn:prop6f}.
\end{lemma}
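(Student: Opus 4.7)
The plan is to first establish the properties of the unpruned partition $\mathcal{P}$ produced by Algorithm~\ref{alg:one}, and then track how Algorithm~\ref{alg:two} modifies them. Algorithm~\ref{alg:one} terminates because each split halves every side of a cell, while any cell surviving rule~\eqref{eq:stop1} has $\sup_{A_i} p_0 > b/\mathrm{vol}(S_a)$, so $\theta_1 p_0$ and $\theta_2 p_0^\gamma$ both remain bounded below on surviving cells and rule~\eqref{eq:stop2} must apply after finitely many splits. The upper bound in the diameter control~\eqref{eqn:prop1f} is precisely the stopping rule~\eqref{eq:stop2}. For the matching lower bound, the parent of $A_i$ failed~\eqref{eq:stop2}, so $\mathrm{diam}(A_{\mathrm{par}}) = 2\,\mathrm{diam}(A_i)$ exceeded $\min(\theta_1 p_0(x_{\mathrm{par}}), \theta_2 p_0^\gamma(x_{\mathrm{par}}))$, and invoking Lipschitzness of $p_0$ (and of $p_0^\gamma$, since $\gamma \leq 1$) to pass from the parent centroid to the child centroid yields the unpruned constant $1/4$. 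Step~4 of Algorithm~\ref{alg:two} then shrinks a single cell $A_N$ by a factor of at most $1/5$, accounting for the degradation to $1/5$ after pruning. The multiplicative control~\eqref{eqn:prop2f} is immediate: $\mathrm{diam}(A_i) \leq \theta_1 p_0(x_i) = p_0(x_i)/(2\lipcons)$, so any $x \in A_i$ satisfies $\|x-x_i\| \leq \mathrm{diam}(A_i)/2$, hence $|p_0(x) - p_0(x_i)| \leq p_0(x_i)/4$ and $\sup_{A_i} p_0/\inf_{A_i} p_0 \leq (5/4)/(3/4) < 2$.

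For the pruning-related claims, I would break the mass of $A_\infty$ into three pieces: (i) the initial $S_a^c$, of mass at most $a = \critradn/1024$; (ii) cells removed by rule~\eqref{eq:stop1}, all contained in $S_a$ with $p_0 \leq b/\mathrm{vol}(S_a)$ and contributing at most $b = \critradn/1024$; and (iii) the pruning contribution, at most $c = \critradn/512$ by the definition of $j^*$ and at least $c/5$ by the step~3/step~4 dichotomy (step~4 is designed to shrink $A_N$ by exactly the amount needed to top off the removed tail to $c/5$). Summing gives~\eqref{eqn:prop6f}. For the truncated $T$-functional inequality~\eqref{eqn:prop4f} I would use that Algorithm~\ref{alg:two} retains the cells of largest $P_0(A_i)$; combining multiplicative control with the monotonicity of $\mathrm{vol}(A_i)$ in $p_0(x_i)$ implicit in~\eqref{eqn:prop1f}, sorting by $P_0$ approximates sorting by $p_0$, so $K$ is an approximate super level set $\{p_0 \geq t\}$. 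Because $P_0(K) \leq 1 - \critradn/2560 < 1 - \critradn/5120$, any $B \in \mathcal{B}_{\critradn/5120}$ has strictly more mass than $K$, with the excess lying where $p_0 \leq t$; since $p_0^{\gamma-1}$ is decreasing ($\gamma < 1$), this extra mass strictly increases $\int p_0^\gamma$, giving $\int_K p_0^\gamma \leq \int_B p_0^\gamma$ and hence the infimum bound. The density lower bound~\eqref{eqn:prop5f} follows in the same spirit: the smallest retained cell $A_N$ satisfies $P_0(A_N)$ at least a fixed multiple of $c/\bigN$, and in the tail regime where $\mathrm{diam}(A_i) = \theta_2 p_0^\gamma(x_i)$ the cell-mass identity $P_0(A_i) \asymp \theta_2^d p_0^{1+\gamma d}(x_i)/d^{d/2}$ inverts to give a lower bound on $p_0(x_i)$ exactly at the threshold where the two arguments of the min defining $\mu(1/5120)$ via~\eqref{eqn:mudef} coincide, namely $(\critradn/(5120\mu(1/5120)))^{1/(1-\gamma)}$.

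I expect the $\ell_1$-preservation claim~\eqref{eqn:prop3f} to be the main obstacle, as this is where the specific choice $\theta_2 = \critradn/(8\lipcons\mu(1/4))$ must be used tightly. With $\bar p(x) := P(A_i)/\mathrm{vol}(A_i)$ and $\bar p_0(x) := P_0(A_i)/\mathrm{vol}(A_i)$ on each $A_i$, the triangle inequality gives
\begin{equation*}
\sum_{i=1}^N |P(A_i) - P_0(A_i)| \;=\; \int_K |\bar p - \bar p_0|\, dx \;\geq\; \int_K |p - p_0|\, dx - E_K,
\end{equation*}
with $E_K := \int_K |p - \bar p| + \int_K |p_0 - \bar p_0|$. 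Lipschitzness gives $\int_{A_i}|p_0 - \bar p_0| \leq \lipcons\,\mathrm{diam}(A_i)\,\mathrm{vol}(A_i)$, and combining $\lipcons\,\mathrm{diam}(A_i) \leq \min(p_0(x_i)/2, \critradn p_0^\gamma(x_i)/(8\mu(1/4)))$ with multiplicative control (so that $p_0(x_i) \leq 2 p_0(y)$ and $p_0^\gamma(x_i) \leq 2 p_0^\gamma(y)$ on $A_i$) to convert the cell-sum into a Lebesgue integral yields
\begin{equation*}
\int_K |p_0 - \bar p_0|\, dx \;\leq\; \int \min\!\bigl(p_0(y),\ \tfrac{\critradn p_0^\gamma(y)}{4\mu(1/4)}\bigr)\, dy \;=\; \tfrac{\critradn}{4},
\end{equation*}
the last equality being exactly the defining identity~\eqref{eqn:mudef} of $\mu(1/4)$ rescaled by a factor of $1/4$. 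Adding the analogous bound with $p$ in place of $p_0$ yields $E_K \leq \critradn/2$. A brief case split finishes the argument: either $|P(A_\infty) - P_0(A_\infty)| \geq \critradn/8$ and~\eqref{eqn:prop3f} holds at once, or $P(A_\infty) \leq P_0(A_\infty) + \critradn/8$, in which case $\int_{A_\infty}|p-p_0| \leq P(A_\infty) + P_0(A_\infty) \leq 2P_0(A_\infty) + \critradn/8 \leq 17\critradn/128$ leaves $\int_K |p-p_0| \geq 111\critradn/128$ and hence $\sum_i |P(A_i) - P_0(A_i)| \geq 111\critradn/128 - \critradn/2 \geq \critradn/8$. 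The delicate part throughout is calibrating the universal constants $1024, 512, 2560, 5120$ so that every inequality in this chain closes against the $\critradn/8$ promised in~\eqref{eqn:prop3f}; the choice of $\mu(1/4)$, rather than $\mu$ at another fixed argument, is dictated by exactly this calibration.
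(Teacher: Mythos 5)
Your arguments for items 1 through 4 are correct and closely track the paper's own proof. The case split you use for~\eqref{eqn:prop3f} (on whether $|P(A_\infty)-P_0(A_\infty)| \geq \critradn/8$, rather than the paper's split on whether $P(A_\infty) \geq \critradn/4$) is a valid and slightly cleaner variant; the intermediate bound $E_K \leq \critradn/2$ and the identification of the defining identity~\eqref{eqn:mudef} at $x=1/4$ are exactly what make the constants close, as you note.

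For~\eqref{eqn:prop4f} there is a genuine gap. You want $\int_K p_0^\gamma \leq \int_B p_0^\gamma$ for every $B$ with $P_0(B) \geq 1 - \critradn/5120$ and justify it by ``the excess of $B$ over $K$ lies where $p_0 \leq t$.'' That is only true if $K$ is an exact super level set of $p_0$, but $K$ is only sandwiched between level sets (up to the factor $2$ from multiplicative control). For a generic $B$ the excess can sit anywhere, so the monotone-rearrangement argument does not close directly on $p_0$. The paper fixes this by passing to the piecewise-constant majorant $\widetilde{p}_0$ (the sup of $p_0$ over each cell), for which $K$ \emph{is} an exact super level set: one then applies Lemma~\ref{lemma::smallest} (via Lemma~\ref{lem::piececonst}) to $\widetilde{p}_0$, and converts back to $p_0$ at the end, picking up the $2^\gamma$ factor. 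Without this intermediary your inequality is not justified.

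For~\eqref{eqn:prop5f} the route you sketch does not work. The claim that the smallest retained cell satisfies $P_0(A_{\smallN}) \gtrsim c/\bigN$ is unsupported and in general false: the unpruned cell count $\bigN$ can be enormous, the retained cells can have wildly disparate masses, and the pruning step only controls the \emph{cumulative tail} mass, not the mass of any individual retained cell. Even granting such a bound, inverting the cell-mass identity $P_0(A_i) \asymp \theta_2^d p_0^{1+\gamma d}(x_i)$ yields an exponent $1/(1+\gamma d)$ on $p_0$, which is not the exponent $1/(1-\gamma)$ appearing in~\eqref{eqn:prop5f}. The paper's argument is instead: the pruning, by multiplicative control, produces a $K$ sandwiched as $\{p_0 \geq t\} \subseteq K \subseteq \{p_0 \geq t/2\}$ for some $t$; then one picks $u = (\critradn/(5120\mu(1/5120)))^{1/(1-\gamma)}$ and checks directly from~\eqref{eqn:mudef} that $\mathbb{P}(p_0 < u) \leq c/10$, hence $\mathbb{P}(p_0 \geq u) \geq P_0(K)$, hence $t \geq u$, hence the density on $K$ is bounded below as claimed. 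You correctly identify $u$ as the crossover of the two arguments in~\eqref{eqn:mudef}, but the path you propose to reach it does not lead there.
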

\noindent The proof of this result is technical and we defer it to Appendix~\ref{app:lip}.

While we focused our discussions on the properties of the partition from the perspective of establishing the upper bound in Theorem~\ref{thm:main} it turns out that several of these properties are crucial in proving the lower bound as well. 
The optimal adaptive partition creates larger cells in regions where the density $p_0$ is higher, and smaller cells where $p_0$ is lower. This might seem counter-intuitive from the perspective of the upper bound since we create many low-probability cells which are likely to be empty in a small finite-sample, and indeed this construction is in some sense opposite to the quantile transformation suggested by previous work \cite{ingster94,gine15}. However, from the perspective of the lower bound this is completely natural. It is intuitive that 
our perturbation be large in regions where the density is large since the likelihood ratio is relatively stable in these regions, and hence these changes are more difficult to detect. The requirement of smoothness constrains the amount by which we can we can perturb the density on any given cell, i.e. for a large perturbation the corresponding cell should have a large diameter leading to the conclusion that we must use larger cells in regions where $p_0$ is higher.

In this section, we have focused on providing intuition for our adaptive partitioning scheme. In the next section we provide the body of the proof of Theorem~\ref{thm:main}, and defer the remaining technical aspects to the Supplementary Material.

\subsection{Proof of Theorem~\ref{thm:main}}
\label{sec:lipproof}
We consider the lower and upper bounds in turn.
\subsubsection{Proof of Lower Bound}
We note that the lower bound in~\eqref{eqn:minimaxcritradlipup} is trivial when $\critradn \geq 1/C$ so throughout the proof we focus on the case when $\critradn$ is smaller than a universal constant, i.e. when $\critradn \leq \frac{1}{C}$.

\noindent {\bf Preliminaries: }  We begin by briefly introducing the lower bound technique due to Ingster (see for instance \cite{ingster03}). Let ${\cal P}$ be a set of distributions and let $\Phi_n$ be the set
level $\alpha$ tests based on $n$ observations
where $0 < \alpha < 1$ is fixed.
We want to bound the minimax type II error
\begin{align*}
\zeta_n({\cal P}) = \inf_{\phi\in\Phi_n}\sup_{P\in {\cal P}}P^n(\phi=0).
\end{align*}
Define $Q$ as $Q(A) = \int P^n (A) d\prior (P)$, where $\prior$ is a prior distribution whose support is contained in ${\cal P}$.
In particular, if
$\prior$ is uniform on a finite set $P_1,\ldots, P_N$ then
\begin{equation*}
Q(A) = \frac{1}{N}\sum_j P_j^n(A).
\end{equation*}
Given $n$ observations we define the likelihood ratio
\begin{equation*}
\likrat(Z_1,\ldots,Z_n) = \frac{d Q}{dP_0^n} = \int \frac{p(Z_1,\ldots,Z_n)}{p_0(Z_1,\ldots,Z_n)} d\prior(p) = 
\int \prod_j \frac{p(Z_j)}{p_0(Z_j)} d\prior(p).
\end{equation*}

\begin{lemma}
\label{lemma:ingster}
Let $0<\zeta < 1-\alpha$.
If
\begin{equation}\label{eq:L}
\mathbb{E}_0[\likrat^2(Z_1,\ldots,Z_n)] \leq 1 + 4 (1-\alpha-\zeta)^2
\end{equation}
then
$\zeta_n({\cal P}) \geq \zeta$.
\end{lemma}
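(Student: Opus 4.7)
The standard strategy for Ingster's lemma is to reduce the composite testing problem to a \emph{simple versus simple} problem by averaging, and then to control the distance between the mixture $Q$ and the null $P_0^n$ via the $\chi^2$-divergence. The inequality on $\mathbb{E}_0[W_n^2]$ in the hypothesis is precisely a bound on $1 + \chi^2(Q, P_0^n)$, so the main task is to convert this second-moment bound into a bound on the type II error.

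\textbf{Step 1: reduce to mixture alternative.} For any test $\phi \in \Phi_{n,\alpha}$, I would first note that the sup over $\mathcal{P}$ is bounded below by the mixture, i.e.
\begin{equation*}
\sup_{P \in \mathcal{P}} P^n(\phi = 0) \;\geq\; \int P^n(\phi = 0)\, d\pi(P) \;=\; Q(\phi = 0) \;=\; 1 - Q(\phi = 1).
\end{equation*}
So it suffices to show that $Q(\phi = 1) \leq 1 - \zeta$ for every $\phi \in \Phi_{n,\alpha}$.

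\textbf{Step 2: bridge via total variation.} Since $\phi \in \Phi_{n,\alpha}$ gives $P_0^n(\phi = 1) \leq \alpha$, the triangle-type inequality
\begin{equation*}
Q(\phi = 1) \;\leq\; P_0^n(\phi = 1) + \bigl| Q(\phi = 1) - P_0^n(\phi = 1) \bigr| \;\leq\; \alpha + \mathrm{TV}(Q, P_0^n)
\end{equation*}
reduces matters to bounding $\mathrm{TV}(Q, P_0^n)$ by $1 - \alpha - \zeta$. I would use the classical inequality $\mathrm{TV}(Q, P_0^n) \leq \tfrac{1}{2}\sqrt{\chi^2(Q, P_0^n)}$ (a one-line Cauchy--Schwarz estimate: write $|Q-P_0^n|$ as $|W_n - 1|\, dP_0^n$ and split the integrand as $1 \cdot (W_n - 1)$). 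Since $W_n = dQ/dP_0^n$, by definition $\chi^2(Q, P_0^n) = \mathbb{E}_0[W_n^2] - 1$, and the hypothesis $\mathbb{E}_0[W_n^2] \leq 1 + 4(1 - \alpha - \zeta)^2$ gives exactly $\mathrm{TV}(Q, P_0^n) \leq 1 - \alpha - \zeta$.

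\textbf{Step 3: assemble.} Combining the two previous steps, $Q(\phi = 1) \leq \alpha + (1 - \alpha - \zeta) = 1 - \zeta$, hence $Q(\phi = 0) \geq \zeta$ and therefore $\sup_{P \in \mathcal{P}} P^n(\phi = 0) \geq \zeta$. Taking the infimum over $\phi \in \Phi_{n,\alpha}$ yields $\zeta_n(\mathcal{P}) \geq \zeta$. There is no real obstacle here; the only subtle point is the tight factor of $4$ in the hypothesis, which arises from squaring in the TV--$\chi^2$ inequality. In later applications of this lemma the genuinely hard part is not the lemma itself but the choice of prior $\pi$ (supported on Lipschitz perturbations produced by the adaptive partition of Section~\ref{sec:part}) such that $\mathbb{E}_0[W_n^2]$ can be computed and bounded.
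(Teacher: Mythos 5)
Your proof is correct and follows essentially the same route as the paper: reduce to the mixture $Q$, lower-bound $Q(\phi=0)$ by $1-\alpha-\mathrm{TV}(Q,P_0^n)$, and control the total variation via the Cauchy--Schwarz bound $\mathrm{TV}(Q,P_0^n)\leq\tfrac12\sqrt{\mathbb{E}_0[W_n^2]-1}$. The paper phrases the middle step through $\sup_A|Q(A)-P_0^n(A)|$ rather than through a single test $\phi$, but these are the same estimate, and the factor of $4$ arises in both for the same reason.
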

\noindent Roughly, this result asserts that in order to produce a minimax lower bound on the Type II error, it suffices to appropriately upper bound the second moment under the null of the likelihood ratio.
The proof is standard but presented in Appendix~\ref{app:lip} for completeness. A natural way to construct the prior $\prior$ on the set of alternatives, is to partition the domain of $p_0$ and then to locally perturb $p_0$ by adding or subtracting sufficiently smooth ``bumps''. In the setting where the partition has fixed-width cells this construction is standard \cite{ingster94,ariascastro16} and we provide a  generalization to allow for variable width partitions and to allow for non-uniform $p_0$.
Formally, let $\psi$ be a smooth bounded function on the hypercube $\mathcal{I} = [-1/2,1/2]^d$ such that
\begin{align*}
\int_{\mathcal{I}} \psi(x) dx = 0~~~\text{and}~~~\int_{\mathcal{I}} \psi^2(x) dx = 1.
\end{align*}
Let
$\mathcal{P} = \{A_1,\ldots,A_{\smallN},A_{\infty}\}$ be any partition that satisfies
the condition in~\eqref{eqn:prop2f}, and further let $\{x_1,\ldots,x_{\smallN}\}$ denote the centroids
of the cells $\{A_1,\ldots,A_{\smallN}\}$. Suppose further, that each cell 
$A_j$ for $j \in \{1,\ldots,\smallN\}$ is a cube with side-length $c_j h_j$ for some constants $c_j \leq 1$, and 
\begin{align*}
h_j = \frac{1}{\sqrt{d}}\min\{\theta_1 p_0(x_j), \theta_2 p_0^{\gamma}(x_j)\}.
\end{align*}
Let $\eta = (\eta_1,\eta_2,\ldots,\eta_{\smallN})$ be a Rademacher sequence
and define
\begin{equation}\label{eq:feta}
p_\eta = p_0 +  \sum_{j=1}^{\smallN} \rho_j \eta_j \psi_j
\end{equation}
where each
$\rho_j  \geq 0$ and
$$
\psi_j(t) = \frac{1}{c_j^{d/2}h_j^{d/2}}\, \psi\left( \frac{t-x_j}{c_jh_j}\right)
$$
for $t\in A_j$.
Hence, $\int_{A_j}\psi_j(t) = 0$ and
$\int_{A_j}\psi_j^2(t) = 1$.
Finally, let us denote:
\begin{align*}
\finalcon := \max \left\{ \|\psi\|_{\infty}, \frac{8 \|\psi^{\prime}\|_{\infty}}{(1 - \intconst)} \right\},~~~\text{and}~~~\finalcontwo := \|\psi\|_1.
\end{align*}
With these definitions in place we state a result that gives a lower bound for a sequence of
perturbations $\rho_j$ that satisfy certain conditions.
\begin{lemma}
\label{lem:rho}
Let $\alpha,\zeta$ and $\critradn$ be non-negative numbers with
$1-\alpha-\zeta>0$.
Let 
$C_0 = 1 + 4(1-\alpha-\zeta)^2$.
Assume that for each $j \in \{1,\ldots,\smallN\},$ $\rho_j$ and $h_j$ satisfy:
\begin{align}
(a)\ & \rho_j  \leq \frac{c_j^{d/2}}{\finalcon} \lipcons h_j^{1+ \frac{d}{2}}~\label{eqn:condone}\\
(b)\ & \sum_{j=1}^{\smallN} \rho_j c_j^{d/2} h_j^{d/2} \geq \frac{\critradn}{\finalcontwo}~\label{eqn:condtwo}\\
(c)\ & \sum_{j=1}^{\smallN} \frac{\rho_j^4}{p_0^2(x_j)} \leq \frac{ \log C_0}{4n^2}~\label{eqn:condthree},
\end{align}
then the Type II error of any test is at least $\zeta$.
\end{lemma}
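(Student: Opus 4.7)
The plan is to instantiate the Ingster method of Lemma~\ref{lemma:ingster} with the perturbation construction in~\eqref{eq:feta}, using the Rademacher signs $\eta_j$ to index the prior $\pi$. For the conclusion to follow, every $p_\eta$ must belong to the alternative set, i.e. (i) $p_\eta$ is a probability density, (ii) $p_\eta \in \lipclass(\lipcons)$, and (iii) $\|p_\eta - p_0\|_1 \geq \critradn$; condition (c) then handles the second moment of the likelihood ratio.

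Conditions (i)--(iii) are what (a) and (b) are designed to give. Integration to one is automatic from $\int \psi_j = 0$. For nonnegativity and Lipschitz control, note that $\rho_j \psi_j$ has $L^\infty$-norm at most $\rho_j \coninfty/(c_j h_j)^{d/2}$ and Lipschitz constant at most $\rho_j \condinfty/(c_j h_j)^{d/2+1}$. Plugging (a) in, using the definition of $h_j$ together with the multiplicative control~\eqref{eqn:prop2f} of $p_0$ on $A_j$, the $L^\infty$ bound becomes a small multiple of $p_0(x_j)$ (ensuring positivity), while the Lipschitz bound is at most $(1-\intconst)\lipcons$; combined with $p_0 \in \lipclass(\intconst \lipcons)$ and the disjoint cell-supports of the bumps, this yields $p_\eta \in \lipclass(\lipcons)$. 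For (iii), disjointness of supports and $\int_{A_j} |\psi_j| = c_j^{d/2} h_j^{d/2}\|\psi\|_1$ give $\|p_\eta - p_0\|_1 = \finalcontwo \sum_j \rho_j c_j^{d/2} h_j^{d/2}$, which is exactly what (b) forces to be at least $\critradn$.

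The core computation is the second-moment bound. Writing $\likrat^2 = \mathbb{E}_{\eta,\eta'} \prod_{i=1}^n p_\eta(Z_i) p_{\eta'}(Z_i)/p_0^2(Z_i)$ and taking expectation under $p_0^n$, the per-sample integral collapses by disjointness of supports and the zero-mean property of $\psi_j$ to
\begin{equation*}
\int \frac{p_\eta p_{\eta'}}{p_0}\,dx = 1 + \sum_{j=1}^{\smallN} \eta_j \eta'_j\, \rho_j^2 \int_{A_j} \frac{\psi_j^2(x)}{p_0(x)}\,dx \leq 1 + \sum_{j=1}^{\smallN} \eta_j \eta'_j\, \frac{2 \rho_j^2}{p_0(x_j)},
\end{equation*}
where~\eqref{eqn:prop2f} provides the factor of $2$ via $p_0(x) \geq p_0(x_j)/2$ on $A_j$. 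Raising to the $n$th power, applying $(1+u)^n \leq e^{nu}$, exploiting independence of the products $\eta_j \eta'_j$ (which are i.i.d.\ Rademacher), and using $\cosh(t) \leq e^{t^2/2}$ yield
\begin{equation*}
\mathbb{E}_0[\likrat^2] \leq \exp\Bigl(2 n^2 \sum_{j=1}^{\smallN} \frac{\rho_j^4}{p_0^2(x_j)}\Bigr),
\end{equation*}
and condition (c) drives the exponent to at most $(\log C_0)/2 \leq \log C_0$, so Lemma~\ref{lemma:ingster} applies and every level-$\alpha$ test has Type II error at least $\zeta$.

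The main obstacle I anticipate is the careful bookkeeping in step (ii): translating the compact condition (a) into simultaneous pointwise positivity of $p_\eta$ and a global Lipschitz bound requires tracking the shrinkage factor $c_j$ (since the sidelength of $A_j$ is $c_j h_j$ rather than $h_j$), the multiplicative control of $p_0$ on each cell, and the role of $\finalcon = \max(\coninfty,\, 8\condinfty/(1-\intconst))$ in absorbing the slack needed to keep the perturbed density in $\lipclass(\lipcons)$. Once the construction is certified, the Rademacher second-moment calculation is essentially routine.
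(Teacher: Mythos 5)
Your proposal follows the same route as the paper's proof: instantiate the Ingster lower bound with a uniform prior over the Rademacher-signed bump perturbations $p_\eta$, use condition~(a) to certify nonnegativity and Lipschitz membership (via the $L^\infty$ and Lipschitz norms of the rescaled bumps, the cell-wise multiplicative control of $p_0$, and the $c_j\geq\text{const}$ slack absorbed by $\finalcon$), condition~(b) for the $\ell_1$ separation $\|p_\eta-p_0\|_1 = \finalcontwo\sum_j\rho_j c_j^{d/2}h_j^{d/2}$, and condition~(c) for the Rademacher second-moment bound. The only cosmetic divergence is that you use $\cosh t \le e^{t^2/2}$ directly where the paper chains $\cosh x \le 1+x^2 \le e^{x^2}$ (yours is the cleaner inequality, and both land within the budget $\log C_0$), so the proposal is correct and matches the paper's argument.
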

Effectively, this lemma generalizes the result of \citet{ingster94} to allow for non-uniform $p_0$
and further allows for variable width bins. The proof proceeds by verifying that under the conditions of the lemma, $p_{\eta}$ is sufficiently smooth, and separated from $p_0$ by at least $\critradn$ in the $\ell_1$ metric. 
We let the prior be uniform on the the set of possible distributions $p_{\eta}$ and
directly analyze the second moment of the likelihood ratio, and obtain the result by applying Lemma~\ref{lemma:ingster}. See Appendix~\ref{app:lip} for the proof of this lemma.
It is worth noting the condition in~\eqref{eqn:condone}, which ensures smoothness of $p_{\eta}$, allows for larger perturbations $\rho_j$ for bins where $h_j$ is large, which is one of the key benefits
of using variable bin-widths in the lower bound.

With this result in place, to produce the desired minimax lower bound it only remains to specify the partition, select a sequence of perturbations $\rho_j$ and verify that the conditions of Lemma~\ref{lem:rho} are satisfied.

\noindent {\bf Final Steps: } 
We begin by specifying the partition. 
We define,
\begin{align*}
\nu = \min\left\{ \frac{\finalcontwo}{\finalcon 4^{d+1} \sqrt{d}}, 1 \right\}.
\end{align*}
For the lower bound we do not need to prune the partition, rather we simply apply Algorithm~\ref{alg:one} with $\theta_1 = 1/(2\lipcons),$ and $\theta_2 = \critradn/(\lipcons \nu \mu(2/\nu))$.
We choose $a = b = \critradn/1024$, and denote the resulting partition~$\mathcal{P} = \{A_1,\ldots,A_{\smallN}, A_{\infty}\}$. Using Lemma~\ref{lem:uppermain} we have that the
partition satisfies~\eqref{eqn:prop1f} with the constant 5 replaced by 4, \eqref{eqn:prop2f} and the upper bound in~\eqref{eqn:prop6f}.
We now choose a sequence $\{\rho_1,\ldots,\rho_{\smallN}\}$, and proceed to verify that the conditions of Lemma~\ref{lem:rho} are satisfied.
We choose,
\begin{align*}
\rho_j =  \frac{c_j^{d/2}}{\finalcon} \lipcons h_j^{1+ \frac{d}{2}},
\end{align*}
thus ensuring the condition in~\eqref{eqn:condone} is satisfied.

\noindent {\bf Verifying the condition in~\eqref{eqn:condtwo}: } 
Recall the definition of $\mu$ in~\eqref{eqn:mudef},
\begin{align*}
\frac{\critradn}{\nu} = \int_{\mathbb{R}^d} \min \left\{ \frac{p_0(y)}{2}, \frac{\critradn p_0(y)^{\gamma}}{\nu \mu(2/\nu)} \right\} dy,
\end{align*}
provided that $\critradn \leq \nu/2$ which is true by our assumption on the critical radius.
Recalling the definition of $K$ in~\eqref{eqn:k}, we have that,
\begin{align*}
\int_K \min \left\{ \frac{p_0(y)}{2}, \frac{\critradn p_0(y)^{\gamma}}{\nu \mu(2/\nu)} \right\} dy \geq \frac{\critradn}{\nu} - \frac{P(A_{\infty})}{2}.
\end{align*}
We define the function 
\begin{align*}
h(y) := \frac{1}{\sqrt{d}}\min \left\{ \frac{p_0(y)}{2\lipcons}, \frac{\critradn p_0(y)^\gamma}{\lipcons \nu \mu(2/\nu)} \right\},
\end{align*}
and as a consequence of the property~\eqref{eqn:prop2f} we obtain that for any $y \in A_j$ for $j \in \{1,\ldots,\smallN\}$,
\begin{align*}
h_j \geq \frac{h(y)}{2}.
\end{align*}
This in turn yields that,
\begin{align*}
L \sum_{j=1}^{\smallN} h_j^{d+1} \geq \frac{1}{2\sqrt{d}} \int_K \min \left\{ \frac{p_0(y)}{2}, \frac{\critradn p_0(y)^{\gamma}}{\nu \mu(2/\nu)} \right\} dy \geq \frac{1}{2 \sqrt{d}} \left( \frac{\critradn}{\nu} - \frac{P(A_{\infty})}{2} \right) \geq \frac{\critradn}{4 \sqrt{d} \nu},
\end{align*}
where the final step uses the upper bound in~\eqref{eqn:prop6f}. We then have that,
\begin{align*}
\sum_{j=1}^{\smallN} \rho_j c_j^{d/2} h_j^{d/2} = \sum_{j=1}^N \frac{\lipcons c_j^d h_j^{d+1}}{\finalcon} \geq \sum_{j=1}^N \frac{\lipcons h_j^{d+1}}{4^d \finalcon} \geq \frac{\critradn}{\finalcontwo},
\end{align*}
which establishes the condition in~\eqref{eqn:condtwo}.

\noindent {\bf Verifying the condition in~\eqref{eqn:condthree}: } 
We note the inequality (which can be verified by simple case analysis) 
that for $a,b, u, v \geq 0$,
\begin{align*}
\min\{a,b\} \leq \min \{ a^{\frac{u}{u+v}} b^{\frac{v}{u+v}}, b\},
\end{align*}
in particular for $u=1, v=3+d$ we obtain,
\begin{align}
\label{eqn:magic}
\min \{a,b\} \leq \min\{ (a b^{3+d})^{\frac{1}{4+d}}, b\}.
\end{align}
Returning to the condition in~\eqref{eqn:condthree}
we have that,
\begin{align*}
\sum_{j=1}^\smallN \frac{\rho_j^4}{p_0(x_j)^2} \leq \frac{\lipcons^4}{\finalcon^4} \sum_{j=1}^\smallN \frac{c_j^{2d}  h_j^{4+2d}}{p_0(x_j)^2} \leq \frac{\lipcons^4}{\finalcon^4} \sum_{j=1}^\smallN \frac{h_j^d h_j^{4+d}}{p_0(x_j)^2},
\end{align*}
using the fact that $c_j \leq 1$.
Using the chosen values for $h_j$ we obtain that,
\begin{align*}
\sum_{j=1}^\smallN \frac{\rho_j^4}{p_0(x_j)^2} &\leq \frac{\lipcons^4}{\finalcon^4 \sqrt{d}^{4+d}} \sum_{j=1}^{\smallN} \frac{h_j^d}{p_0(x_j)^2} \min \left\{ \left[\frac{p_0(x_j)}{2 \lipcons }\right]^{4+d}, 
\left[ \frac{\critradn p_0^\gamma (x_j) }{\lipcons \nu \mu(2/\nu) }\right]^{4+d} \right\} \\
& \stackrel{\text{(i)}}{\leq}  \frac{\lipcons^4}{\finalcon^4 \sqrt{d}^{4+d}} \sum_{j=1}^{\smallN}  \frac{h_j^d}{p_0(x_j)^2}
\min \left\{ \frac{p_0(x_j)^3 \critradn^{3+d} }{ 2\lipcons  (\lipcons \nu \mu(2/\nu))^{3+d}}, \left[ \frac{\critradn p_0^\gamma (x_j) }{\lipcons \nu \mu(2/\nu) }\right]^{4+d} \right\} \\
&= \frac{\critradn^{3+d}}{\lipcons^d \mu(2/\nu)^{3+d} \nu^{4+d}  \finalcon^4 \sqrt{d}^{4+d}} 
\sum_{j=1}^{\smallN} h_j^d \min \left\{ \frac{p_0(x_j)}{2/\nu}, \frac{\critradn p_0(x_j)^\gamma }{\mu(2/\nu)}\right\} \\
&\leq \frac{2 \critradn^{3+d}}{\lipcons^d \mu(2/\nu)^{3+d} \nu^{4+d}  \finalcon^4 \sqrt{d}^{4+d}} 
\int_{K} \min \left\{ \frac{p_0(x)}{2/\nu}, \frac{\critradn p_0(x)^\gamma }{\mu(2/\nu)}\right\} dx \\
&\leq \frac{2 \critradn^{3+d}}{\lipcons^d \mu(2/\nu)^{3+d} \nu^{4+d}  \finalcon^4 \sqrt{d}^{4+d}} 
\int_{\mathbb{R}^d} \min \left\{ \frac{p_0(x)}{2/\nu}, \frac{\critradn p_0(x)^\gamma }{\mu(2/\nu)}\right\} dx \\
&\stackrel{\text{(ii)}}{\leq} \frac{2 \critradn^{4+d}}{\lipcons^d \mu(2/\nu)^{3+d} \nu^{4+d}  \finalcon^4 \sqrt{d}^{4+d}}.
\end{align*}
where (i) follows from the inequality in~\eqref{eqn:magic}, and (ii) uses~\eqref{eqn:mudef}.
Using Lemma~\ref{lem:mubound} we obtain,
\begin{align*}
\mu(2/\nu) \geq T^{\gamma}_{2\critradn/\nu} (p_0), 
\end{align*}
provided that $\critradn \leq \nu/2$. This yields that,
\begin{align*}
\sum_{j=1}^\smallN \frac{\rho_j^4}{p_0(x_j)^2} &\leq \frac{2 \critradn^{4+d}}{\lipcons^d 
T_{2 \critradn/\nu}^2 (p_0)
\nu^{4+d}  \finalcon^4 \sqrt{d}^{4+d}},
\end{align*}
and we require that this quantity is upper bounded by $\frac{\log C_0}{4n^2}$.
For constants $c_{1}, c_{2}$ that depend only on the dimension $d$
it suffices to choose $\critradn$ as the solution to the equation:
\begin{align*}
\critradn = \left( \frac{\lipcons^{d/2} T_{c_{2} \critradn}(p_0) \log C_0 }{ c_{1} n}  \right)^{2/(4+d)} 
\end{align*}
and an application of Lemma~\ref{lem:rho} yields the lower bound of Theorem~\ref{thm:main}.

\subsubsection{Proof of Upper Bound}
In order to establish the upper bound, we construct an adaptive partition using Algorithms~\ref{alg:one} and~\ref{alg:two}, and utilize the test analyzed in 
Theorem~\ref{thm:valup} from \cite{valiant14} to test the resulting multinomial. 
For the upper bound we use the partition $\mathcal{P}^{\dagger}$ studied in Lemma~\ref{lem:uppermain}, i.e. we take $\theta_1 = 1/(2\lipcons), \theta_2 = \critradn/(8 \lipcons \mu(1/4)), a = b = \critradn/1024$ and $c = \critradn/512$. Using the property in~\eqref{eqn:prop3f}, it suffices to upper bound the $V$-functional in~\eqref{eqn:vfunc}, 
for $\sigma = \critradn/128.$

The following technical lemma shows that the truncated $V$-functional is upper bounded by the $V$-functional over the partition excluding $A_{\infty}.$ For the partition $\mathcal{P}^{\dagger}$, we have the associated multinomial 
$q := \{P_0(A_1),\ldots,P_0(A_{\infty})\}$. With these definitions in place we have the following result.
\begin{lemma}
\label{lem:lipvfunc}
For the multinomial $q$ defined above, the truncated $V$-functional is upper bounded as:
\begin{align*}
V_{\critradn/128}^{2/3}(q) \leq \sum_{i=1}^{\smallN} P_0(A_i)^{2/3} := \kappa.
\end{align*}
\end{lemma}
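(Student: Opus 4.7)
The plan is to unpack the definition of the truncated $V$-functional and carry out a short case analysis. Raising both sides of the claimed inequality to the $2/3$ power and using~\eqref{eqn:vfunc}, the statement is equivalent to
\begin{equation*}
\sum_{i \in \mathcal{B}_{\critradn/128}(q)} q(i)^{2/3} \leq \sum_{j=1}^{\smallN} P_0(A_j)^{2/3},
\end{equation*}
where the multinomial $q$ has $\smallN+1$ entries $\{P_0(A_1),\ldots,P_0(A_{\smallN}), P_0(A_\infty)\}$. The only entry present on the left but a priori not on the right is $P_0(A_\infty)^{2/3}$, so the whole task reduces to showing this extra term can be absorbed.

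The key structural observation I would invoke is that, by~\eqref{eqn:bulkdef}, the bulk $\mathcal{B}_\sigma(\cdot)$ always excludes the index of the \emph{maximal} entry. This motivates a dichotomy. First, if $P_0(A_\infty) \notin \mathcal{B}_{\critradn/128}(q)$, then the sum on the left ranges over indices drawn from $\{A_1,\ldots,A_{\smallN}\}$ only and the inequality is immediate. Otherwise $P_0(A_\infty)$ lies in the bulk, in which case it cannot be the maximal entry of $q$; hence there exists some $j^*\in\{1,\ldots,\smallN\}$ with $P_0(A_{j^*})\geq P_0(A_\infty)$ and $A_{j^*}$ excluded from the bulk. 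Splitting the left-hand sum into its $A_\infty$ contribution and its $A_j$ contributions then gives
\begin{equation*}
\sum_{i \in \mathcal{B}_{\critradn/128}(q)} q(i)^{2/3} \leq P_0(A_\infty)^{2/3} + \sum_{j=1}^{\smallN} P_0(A_j)^{2/3} - P_0(A_{j^*})^{2/3} \leq \sum_{j=1}^{\smallN} P_0(A_j)^{2/3},
\end{equation*}
where the last step uses $P_0(A_{j^*})^{2/3}\geq P_0(A_\infty)^{2/3}$ together with monotonicity of $x\mapsto x^{2/3}$.

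There is no serious obstacle in this argument; the entire lemma is a bookkeeping consequence of the fact that the bulk, by construction, omits the single largest coordinate, which is precisely what absorbs the rogue $P_0(A_\infty)^{2/3}$ in the only non-trivial case. In particular, no quantitative property of the partition from Lemma~\ref{lem:uppermain} (such as the bound $P_0(A_\infty)\leq \critradn/256$ from~\eqref{eqn:prop6f}) needs to be invoked, confirming that this lemma is a purely combinatorial stepping stone rather than a source of sharpness in the main upper bound.
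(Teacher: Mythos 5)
Your proof is correct, and it takes a genuinely different route from the paper's. The paper reduces the claim to a mass-comparison statement---that the probability mass truncated from $q$ (the tail $\epstail{\critradn/128}$ together with sorted index~$1$) is at least $P_0(A_\infty)$---and then invokes the discrete rearrangement inequality of Lemma~\ref{lem::flat}. That mass comparison does require the quantitative pruning guarantee $P_0(A_\infty)\leq \critradn/256$ from~\eqref{eqn:prop6f}: when the sorted index $a$ of $P_0(A_\infty)$ sits inside the bulk, the paper argues $\sum_{i>s} q_i \geq \critradn/128 - q_s \geq \critradn/128 - P_0(A_\infty) \geq \critradn/256 \geq P_0(A_\infty)$, using $q_s \leq q_a = P_0(A_\infty)$. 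Your argument dispenses with both Lemma~\ref{lem::flat} and the pruning bound. The observation that the bulk defined in~\eqref{eqn:bulkdef} omits sorted index~$1$ lets you, in the only nontrivial case, trade the extra term $P_0(A_\infty)^{2/3}$ for the at-least-as-large omitted term $P_0(A_{j^*})^{2/3}$. This is cleaner, works without any bound on $P_0(A_\infty)$, and makes precise your closing remark that the lemma is a combinatorial fact about the bulk rather than a consequence of the partition's quantitative properties. What the paper's route buys is mainly conceptual uniformity: Lemma~\ref{lem::flat} and its continuous analogue Lemma~\ref{lem::piececonst} are already developed to establish~\eqref{eqn:prop4f}, so the authors get this lemma essentially for free once that machinery is in place, whereas your swap argument is purpose-built for this step.
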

\noindent We prove this result in Appendix~\ref{app:lip}.
Roughly, this lemma asserts that our pruning is less aggressive than the tail truncation
of the multinomial test from the perspective of controlling the 2/3-rd norm. 
With this claim in place it only remains to upper bound $\kappa$. Using the property
in~\eqref{eqn:prop2f} we have that,
\begin{align*}
\kappa \leq
\sum_{i=1}^{\smallN} \left(2 p_0(x_i) \text{vol}(A_i) \right)^{2/3} \leq 
2^{2/3} \sum_{i=1}^{\smallN} \frac{p_0(x_i)^{2/3}}{h_i^{d/3}} h_i^d.
\end{align*}
Using the condition in~\eqref{eqn:prop5f} verify that for all $x \in K$ we have that
\begin{align*}
\theta_1 p_0(x) \geq \frac{\critradn p_0^{\gamma}(x)}{ 10240  \lipcons \mu(1/5120) },
\end{align*}
and this yields that for a constant $c > 0$ for each $i \in \{1,\ldots,\smallN\}$,
\begin{align*}
h_i \geq  \frac{c \critradn p_0^{\gamma}(x_i)}{  \lipcons \mu(1/5120) }.
\end{align*}
Using the property in~\eqref{eqn:prop2f} we then obtain that for a constant $C > 0$,
\begin{align*}
\kappa \leq C \left(\frac{ \lipcons \mu(1/5120)}{\critradn} \right)^{d/3}\int_K p_0^{\gamma}(x) dx,
\end{align*}
and using the property~\eqref{eqn:prop4f} and~Lemma~\ref{lem:mubound}, we obtain that
for constants $c,C > 0$ that,
\begin{align*}
\kappa \leq C \left(\frac{ \lipcons}{\critradn} \right)^{d/3} T_{c \critradn}^{2/3}(p_0).
\end{align*}
%
With Lemma~\ref{lem:lipvfunc} we obtain that for the multinomial $q$,
\begin{align*}
V_{\critradn/128}(q) \leq C^{3/2} \left(\frac{ \lipcons}{\critradn} \right)^{d/2} T_{c \critradn}(p_0),
\end{align*}
which together with the upper bound of Theorem~\ref{thm:val-val} yields the desired
upper bound for Theorem~\ref{thm:main}. We note that a direct application of Theorem~\ref{thm:val-val} yields a bound on the critical radius that is the maximum of two terms, one scaling as $1/n$ and the other being the desired term in Theorem~\ref{thm:main}. In Lipschitz testing, the $1/n$ term is always dominated by the term involving the truncated functional. This follows from the lower bound on the truncated functional shown in~\eqref{eqn:lbclaim}.

\subsection{Simulations}
In this section, we report some simulation results on Lipschitz testing. We focus on the case
when $d = 1$.
In Figure~\ref{fig:lipsims} we compare the following tests:
\begin{enumerate}
\item 2/3-rd + Tail Test: This is the locally minimax test studied in Theorem~\ref{thm:main}, where we use our binning Algorithm followed by the locally minimax multinomial test from \cite{valiant14}.
\item Chi-sq. Test:  Here we use our binning Algorithm followed by the standard $\chi^2$ test.
\item Kolmogorov-Smirnov (KS) Test: Since we focus on the case when $d = 1$, we also compare to the standard KS test based on comparing the CDF of $p_0$ to the empirical CDF.
\item Naive Binning: Finally, we compare to the approach of using fixed-width bins, together with the $\chi^2$ test. Following the prescription of \citet{ingster94} (for the case when $p_0$ is uniform)
we choose the number of bins so that the $\ell_1$-distance between the null and alternate is approximately preserved, i.e. denoting the effective support to be $S$ we choose the bin-width
as $\critradn/(\lipcons \mu(S))$.
\end{enumerate}

We focus on two simulation scenarios: when the null distribution is a standard Gaussian, and when the null distribution has a heavier tail, i.e. is a Pareto distribution with parameter $\alpha = 0.5$.
We create the alternate density by smoothly perturbing the null after binning, and choose the perturbation weights as in our lower bound construction in order to construct a near worst-case alternative.

We set the $\alpha$-level threshold via simulation (by sampling from the null 1000 times) and we calculate the power under particular alternatives by averaging over a 1000 trials.
We observe several notable effects. First, we see that the locally minimax test can significantly out perform the KS test as well the test based on fixed bin-widths. The failure of the fixed bin-width test is more apparent in the setting where the null is Pareto as the distribution has a large effective support and the naive binning is far less parsimonious than the adaptive binning. On the other hand, we also observe that at least in these simulations the $\chi^2$ test and the locally minimax test from \cite{valiant14} perform comparably when based on our adaptive binning indicating the crucial role played by the binning procedure.

\begin{center}
\begin{figure}
\begin{tabular}{cc}
\includegraphics[scale=0.4]{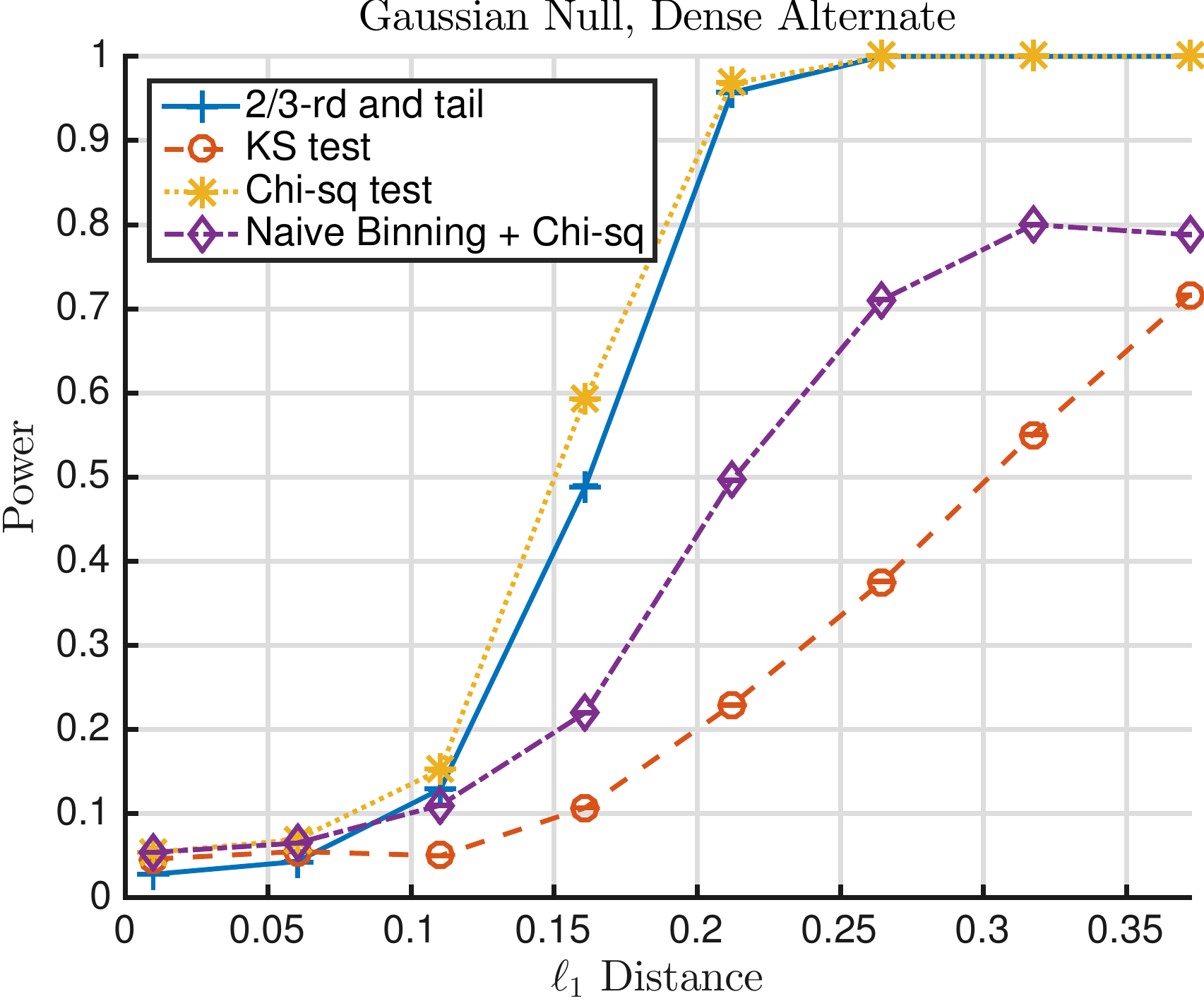} &~~~~~ \includegraphics[scale=0.4]{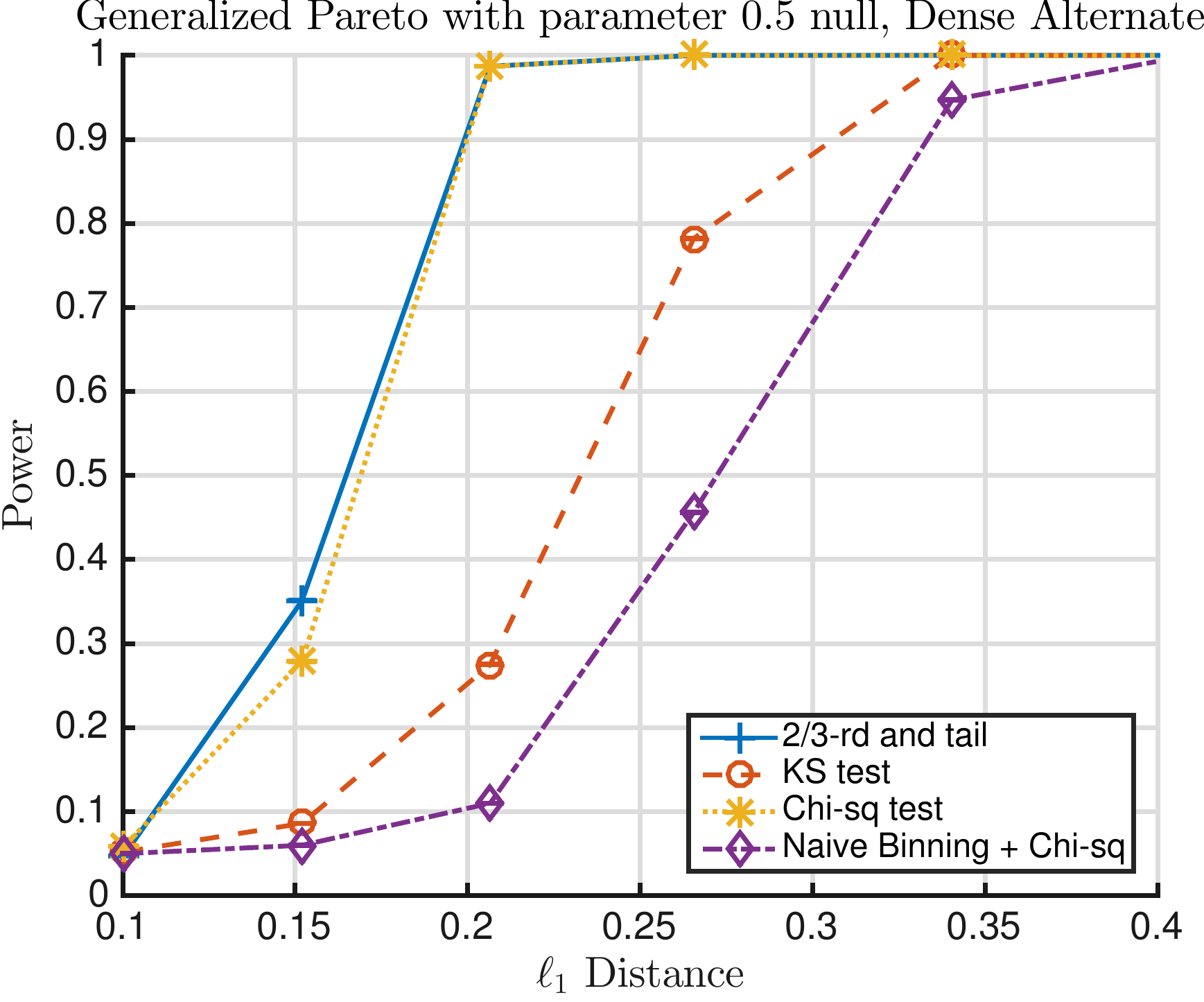}
\end{tabular}
\caption{A comparison between the KS test, multinomial tests on an adaptive binning and multinomial tests on a fixed bin-width binning. In the figure on the left we choose the null to be standard Gaussian and on the right we choose the null to be Pareto. The alternate is chosen to be a dense near worst-case, smooth perturbation of the null. The power of the tests are plotted against the $\ell_1$ distance between the null and alternate. Each point in the graph is an average over 1000 trials.   }
\label{fig:lipsims}
\end{figure}
\end{center}

\section{Discussion}
In this paper, we studied the goodness-of-fit testing problem in the context of testing multinomials and Lipschitz densities. For testing multinomials, we built on prior works \cite{valiant14,diakon16} to provide new globally and locally minimax tests. For testing Lipschitz densities we provide the first results that give a characterization of the critical radius under mild conditions. 

Our work highlights the heterogeneity of the critical radius in the goodness-of-fit testing problem
and the importance of understanding the local critical radius. In the multinomial testing problem it 
is particularly noteworthy that classical tests can perform quite poorly in the high-dimensional setting, and that simple modifications of these tests can lead to more robust inference. In the density testing problem, carefully constructed spatially adaptive partitions play a crucial role.

Our work motivates several open questions, and we conclude by highlighting a few of them. First, in the context of density testing we focused on the case when the density is Lipschitz. An important extension would be to consider higher-order smoothness. Surprisingly, \citet{ingster1997adaptive} shows that bin-based tests continue to be optimal for higher-order smoothness classes when the null is uniform on $[0,1]$. We conjecture that bin-based tests are no longer optimal when the null is not uniform, and further that the local critical radius is roughly determined by the solution to:
\begin{align*}
\critradn(p_0) \asymp \left[ \frac{\lipcons^{d/2s} S_{\critradn(p_0)} (p_0) }{n} \right]^{2s/(4s + d)},
\end{align*}
where the functional $S$ is defined as in~\eqref{eqn:tfuncdef} with $\gamma = 2s/(3s + d)$, and $\lipcons$ is the radius of the H\"{o}lder ball.
Second, it is possible to invert our locally minimax tests in order to construct confidence intervals. We believe that these intervals might also have some local adaptive properties that are worthy of further study. Finally, in the Appendix, we provide some basic results on the limiting distributions of the multinomial test statistics under the null when the null is uniform, and it would be interesting to consider the extension to settings where the null is arbitrary.

\section{Acknowledgements}
This work was partially supported by the NSF grant DMS-1713003. 
The authors would like to thank the participants of the Oberwolfach workshop on ``Statistical Recovery of Discrete, Geometric and Invariant Structures'', for their generous feedback. Suggestions by various participants including David Donoho, Richard Nickl, Markus Reiss, Vladimir Spokoiny, Alexandre Tsybakov, Martin Wainwright, Yuting Wei and Harry Zhou have been incorporated in various parts of this manuscript.

\bibliographystyle{plainnat}
\bibliography{bibtex}

\appendix

\section{Limiting behaviour of test statistics under the null}
\label{app:limit}
In this section, we consider the problem of  finding the asymptotic distribution of the 
multinomial test statistics under the null.
Broadly, there is a dichotomy between classical asymptotics where the null distribution is kept fixed
and a high-dimensional asymptotic where the number of cells is growing and the null distribution can vary with the number of cells. We present a few simple results on the limiting behaviour of our test statistics when the null is uniform and highlight some open problems. Although our techniques generalize in a straightforward way to non-uniform null distributions, they do not necessarily yield tight results.

We focus on the family of test statistics that we use in our paper, that are weighted $\chi^2$-type statistics:
\begin{align}
\label{eqn:tw}
T(w) = \sum_{i=1}^d \frac{(X_i - np_0(i))^2 - X_i}{w_i},
\end{align}
where each $w_i$ is a positive weight that is a fixed function of $p_0(i)$. This family includes the 2/3-rd statistic from~\cite{valiant14}, the truncated $\chi^2$ statistic that we propose, and the usual $\chi^2$ and $\ell_2$ statistics. When the null is uniform, this 
family of test statistics reduces to simple re-scalings of the $\ell_2$ statistic in~\eqref{eqn:elltwo}:
\begin{align*}
T_{\ell_2} =  \sum_{i=1}^d \left[(X_i - np_0(i))^2 - X_i\right].
\end{align*}
Our results are summarized in the following lemma.
\begin{lemma}
\begin{enumerate}
\item Classical Asymptotics: For any fixed $p_0$, the statistic $T(w)$ under the null converges in distribution to a weighted sum of $\chi^2$ distributions, i.e. for $Z_1,\ldots,Z_d \sim \chi^2_1,$ 
\begin{align}
\label{eqn:chisqasymp}
T(w) \overset{d}{\to} \sum_{i=1}^d \frac{p_i}{w_i} \left(Z_i - 1\right).
\end{align}
\item High-dimensional Asymptotics: Suppose $p_0$ is uniform
and $d \rightarrow \infty$, then we have that,
\begin{itemize}
\item If $n/\sqrt{d} \rightarrow \infty$, then 
\begin{align*}
\frac{T_{\ell_2}}{\sqrt{\textrm{Var}_0(T_{\ell_2})}} \overset{d}{\to} N(0, 1).
\end{align*}
\item If $n/\sqrt{d} \rightarrow 0$, then 
\begin{align*}
\frac{T_{\ell_2}}{\sqrt{\textrm{Var}_0(T_{\ell_2})}}  \overset{d}{\to} \delta_0.
\end{align*}
\end{itemize}
\end{enumerate}
\end{lemma}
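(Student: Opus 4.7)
The plan is to work throughout in the Poissonized model, under which the counts $X_1,\ldots,X_d$ are independent with $X_i \sim \poi(np_0(i))$. Define $Y_i := [(X_i - np_0(i))^2 - X_i]/w_i$, so that $T(w) = \sum_{i=1}^d Y_i$ is a sum of independent (not identically distributed) terms. The key algebraic identity is $(X - \lambda)^2 - X = X(X-1) - 2\lambda X + \lambda^2$. Combined with the Poisson factorial moments $E[X(X-1)] = \lambda^2$ and $E[X(X-1)(X-2)(X-3)] = \lambda^4$, this immediately gives $E[Y_i] = 0$ and, after a short computation in which the cubic and quartic terms in $\lambda$ cancel, $\mathrm{Var}(Y_i) = 2(np_0(i))^2/w_i^2$.

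For the classical asymptotics, fix $d$ and $p_0$ and apply the one-dimensional CLT coordinatewise: $(X_i - np_0(i))/\sqrt{np_0(i)} \overset{d}{\to} \mathcal N(0,1)$, and these limits are jointly independent since the $X_i$ are independent in the Poissonized model. By the continuous mapping theorem $(X_i - np_0(i))^2/(np_0(i)) \overset{d}{\to} Z_i \sim \chi^2_1$, while the weak law gives $X_i/(np_0(i)) \overset{p}{\to} 1$, so writing
\begin{equation*}
\frac{(X_i - np_0(i))^2 - X_i}{w_i} = \frac{np_0(i)}{w_i}\!\left(\frac{(X_i - np_0(i))^2}{np_0(i)} - \frac{X_i}{np_0(i)}\right),
\end{equation*}
Slutsky's theorem and independence across $i$ yield the claimed weighted $\chi^2$ limit (modulo the natural $n$-scaling absorbed into $w_i$).

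For the high-dimensional Gaussian regime with uniform $p_0$, the $Y_i$ are i.i.d.\ with variance $2(n/d)^2$, so $\mathrm{Var}_0(T_{\ell_2}) = 2n^2/d$. I would verify Lyapunov's condition at order four by bounding $E[Y_i^4]$. Using the Poisson factorial moments and the identity above, one obtains a bound of the form $E[Y_i^4] \leq C\max\{(n/d)^4, (n/d)^2\}$, so the Lyapunov ratio
\begin{equation*}
\frac{d \cdot E[Y_1^4]}{(2n^2/d)^2} \;\leq\; C\max\!\left\{\frac{d}{n^2},\, \frac{d^3}{n^4}\right\}
\end{equation*}
vanishes exactly when $n/\sqrt{d} \to \infty$. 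The Lyapunov (hence Lindeberg) CLT delivers the $\mathcal N(0,1)$ limit.

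For the degenerate regime the main obstacle is that $T_{\ell_2}/\sqrt{\mathrm{Var}_0(T_{\ell_2})}$ has variance one for every $n$, so convergence to $\delta_0$ is convergence in probability with a vanishing-but-heavy tail carrying all the second moment. I would exploit the decomposition
\begin{equation*}
T_{\ell_2} = \sum_{i=1}^d X_i(X_i - 1) \;-\; \frac{2n}{d}N \;+\; \frac{n^2}{d},\qquad N := \sum_{i=1}^d X_i \sim \poi(n),
\end{equation*}
together with the birthday bound $P(\exists\, i : X_i \geq 2) \leq \binom{n}{2}/d = O(n^2/d) \to 0$. On the high-probability collision-free event the first sum vanishes, leaving $T_{\ell_2} = -(2n/d)(N - n) - n^2/d$; since $N - n = O_p(\sqrt{n})$ by the Poisson CLT, this quantity is $O_p(n^{3/2}/d + n^2/d)$, which after dividing by $\sqrt{\mathrm{Var}_0(T_{\ell_2})} = n\sqrt{2/d}$ becomes $O_p(\sqrt{n/d} + n/\sqrt{d}) = o_p(1)$. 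On the complementary event (probability $o(1)$) the ratio is simply $O_p(1)$ times an indicator going to zero in probability, completing the argument.
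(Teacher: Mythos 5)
Your proof follows essentially the same route as the paper: Lyapunov's CLT for the Gaussian regime and a collision-counting argument for the degenerate regime (the paper declines to prove the classical-asymptotics claim, whereas you supply a standard Slutsky-based sketch, which is fine). One small arithmetic slip in the Lyapunov step: with $E[Y_1^4]\leq C\max\{(n/d)^4,(n/d)^2\}$ and $\mathrm{Var}_0(T_{\ell_2})=2n^2/d$, the ratio is
\begin{equation*}
\frac{d\cdot E[Y_1^4]}{(2n^2/d)^2}\;\leq\; C\max\Bigl\{\frac{d}{n^2},\;\frac{1}{d}\Bigr\},
\end{equation*}
not $C\max\{d/n^2,\,d^3/n^4\}$ as you wrote (note $d\cdot(n/d)^4/(2n^2/d)^2=1/(4d)$); the term $d^3/n^4$ would \emph{not} vanish under $n/\sqrt{d}\to\infty$ alone (take $n=d^{3/4}$), but the corrected expression does, so your conclusion stands. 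In the degenerate regime your explicit decomposition $T_{\ell_2}=\sum_i X_i(X_i-1)-(2n/d)N+n^2/d$ with $N\sim\poi(n)$ is a bit more careful than the paper, which on the collision-free event silently treats the total count as deterministically equal to $n$ rather than accounting for the $O_p(\sqrt{n})$ fluctuation of $N$.
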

\noindent {\bf Remarks:}
\begin{itemize}
\item The behaviour of the $\chi^2$-type statistics under classical asymptotics is well understood and we do not prove the claim in~\eqref{eqn:chisqasymp}.
\item Focusing on the high-dimensional setting, the asymptotic distribution of the test statistic is Gaussian in the regime where the risk of the optimal test tends to 0 as $n \rightarrow \infty$, and is degenerate in the regime where there are no consistent tests. In the most interesting regime when, $n/\sqrt{d} \rightarrow c$, the optimal test can have non-trivial risk, and the limiting distribution is neither Gaussian nor degenerate. 
\item More broadly, an important open question is to characterize the limiting distribution of the test statistic, under both the null and the alternate in the high-dimensional asymptotic.
\end{itemize}
\begin{proof}

The first part follows, by checking the Lyapunov conditions.
We denote 
\begin{align*}
\zeta_i = (X_i - np_0(i))^2 - X_i.
\end{align*}
and can calculate the sum of the variances as:
\begin{align*}
s_d^2 = \sum_{i=1}^d \text{var}(\zeta_i) =  \frac{2n^2}{d}.
\end{align*}
The Lyapunov condition then requires that,
\begin{align*}
\lim_{d \rightarrow \infty} \frac{1}{s_d^4} \sum_{i=1}^d \mathbb{E} \zeta_i^4 = 0.
\end{align*}
A straightforward computation gives that,
\begin{align*}
\mathbb{E}\zeta_i^4 = 8\frac{n^2}{d^2}  + 144 \frac{n^3}{d^3}  + 60 \frac{n^4}{d^4},
\end{align*}
so that the Lyapunov condition is satisfied provided that,
\begin{align*}
\lim_{d \rightarrow \infty} \frac{d^3}{n^6} \rightarrow 0,
\end{align*}
which is indeed the case.

In order to verify the degenerate limit it suffices to show that when $n/\sqrt{d} \rightarrow 0$, then the number of categories that have strictly larger than one occurrence converges to 0. When each observed category is observed only once we have that the test statistic is deterministic, i.e.,
\begin{align*}
T_{\ell_2}= \sum_{i=1}^d \zeta_i = (d-n) \frac{n^2}{d^2}  + n \left( \frac{n^2}{d^2} -  \frac{2n}{d} \right).
\end{align*}
When rescaled by the standard deviation we obtain that,
\begin{align*}
\frac{T_{\ell_2}}{\sqrt{\text{var}_0(T_{\ell_2})}} = \sqrt{\frac{d}{2n^2}} \left[(d-n) \frac{n^2}{d^2}  + n \left( \frac{n^2}{d^2} -  \frac{2n}{d} \right)\right] \rightarrow 0.
\end{align*}
Finally, we can bound the probability that any category is observed more than once as:
\begin{align*}
P( \exists~i, X_i \geq 2) &\leq \sum_{i=1}^d P(X_i \geq 2) \\
&\leq \sum_{i=1}^d \exp (- \lambda) \sum_{k=2}^\infty \left(\frac{n}{d}\right)^k \\
&\leq  \frac{Cn^2}{d} \rightarrow 0.
\end{align*}
Taken together these facts give the desired degenerate limit.
\end{proof}

\section{Analysis of Multinomial Tests}
\label{app:mult}

\subsection{Proof of Theorem~\ref{thm:truncchisq}}
In this section we analyze the truncated $\chi^2$ test. 
For convenience, throughout this proof we 
we work with a scaled version of the statistic in~\eqref{eqn:truncstat}, i.e. we let $T := \truncstat/d$ 
and abusing notation slightly we redefine $\theta_i$ appropriately, i.e.
we take $\theta_i = \max\{1,dp_0(i)\}$.

We begin by controlling the size of the truncated $\chi^2$ test. Fix any multinomial $p$ on $[d]$, and 
suppose we denote $\Delta_i = p_0(i) - p(i)$, then a straightforward computation shows that,
\begin{align}
\label{eqn:altmean2}
\mathbb{E}_p[T] &= n^2 \sum_{i =1}^d \frac{\Delta_i^2}{\theta_i}, 
\end{align}
\begin{align}
\label{eqn:altvar2}
\mathrm{Var}_p[T] &= \sum_{i =1}^d  \frac{1}{\theta_i^2} \left[ 2n^2 p_0(i)^2 + 2n^2 \Delta_i^2 - 4 n^2 \Delta_i p_0(i) + 4n^3 \Delta_i^2 p_0(i) - 4n^3 \Delta_i^3 \right].
\end{align}
This yields that the null variance of $T$ is given by:
\begin{align*}
\mathrm{Var}_0[T] =  \sum_{i =1}^d  \frac{ 2n^2 p_0(i)^2}{\theta_i^2},
\end{align*}
which together with Chebyshev's inequality yields the desired bound on the size. Turning our attention to the power of the test we fix a multinomial $p \in \mathcal{P}_1$. 
Denote the $\alpha$ level threshold of the test by 
\begin{align*}
t_{\alpha} = n \sqrt{ \frac{2}{\alpha} \sum_{i=1}^d \frac{ p_0(i)^2 }{ \theta_i^2}}.
\end{align*}
We observe that, if we can verify the following two conditions:
\begin{align}
\label{eqn:condonex}
t_{\alpha} &\leq \frac{\mathbb{E}_p[T]}{2} \\
\label{eqn:condtwox}
\mathbb{E}_p[T] &\geq2 \sqrt{\frac{\mathrm{Var}_p[T]}{\zeta}},
\end{align}
then we obtain that $P(\trunctest = 0) \leq \zeta.$ To see this, observe that
\begin{align*}
P(\trunctest = 0) &\leq P_1( T - \mathbb{E}_p[T] < t_{\alpha} - \mathbb{E}_p[T]) \\
&\leq P_1( (T - \mathbb{E}_p[T])^2 < (t_{\alpha} - \mathbb{E}_p[T])^2) \\
&\leq \frac{\mathrm{Var}_p[T]}{ (t_{\alpha} - \mathbb{E}_p[T])^2} \leq \frac{4 \mathrm{Var}_p[T]}{  \mathbb{E}_p[T]^2} \leq \zeta.
\end{align*}

\noindent {\bf Condition in Equation~\eqref{eqn:condonex}: } This condition reduces to verifying the following,
\begin{align*}
2 t_{\alpha} \leq n^2 \sum_{i =1}^d \frac{\Delta_i^2}{\theta_i}, 
\end{align*}
and as a result we focus on lower bounding the mean under the alternate. By Cauchy-Schwarz we obtain that,
\begin{align}
\label{eqn:lbmean}
\sum_{i =1}^d \frac{\Delta_i^2}{\theta_i}  \geq \frac{ \|\Delta\|_1^2 }{ \sum_{i=1}^d \theta_i} \geq \frac{ \critradn^2}{ \sum_{i=1}^d \{ 1 + dp_0(i)\} } \geq \frac{ \critradn^2}{ 2d}.
\end{align}
We can further upper bound $t_\alpha$ as
\begin{align*}
t_\alpha = n \sqrt{ \frac{2}{\alpha} \sum_{i=1}^d \frac{ p_0(i)^2 }{ \theta_i^2}}  \leq  n \sqrt{ \frac{2}{d\alpha}},
\end{align*}
using the fact that $p_0(i)/\theta_i \leq \frac{1}{d}.$ 
This yields that Equation~\eqref{eqn:condonex} is satisfied if:
\begin{align*}
\frac{ \critradn^2}{2d} \geq \frac{ 2 \sqrt{2}}{ \sqrt{d \alpha} n},
\end{align*}
which is indeed the case.

\noindent {\bf Condition in Equation~\eqref{eqn:condtwox}: } We can upper bound the variance under the alternate as:
\begin{align*}
\mathrm{Var}_p[T] &\leq \sum_{t =1}^d  \frac{1}{\theta_t^2} \left[ 4n^2 p_0(t)^2 + 4n^2 \Delta_t^2 + 4n^3 \Delta_t^2 p_0(t) - 4n^3 \Delta_t^3 \right] \\
&= \underbrace{\sum_{t =1}^d \frac{4n^2 p_0(t)^2}{\theta_t^2}}_{U_1}  + 
 \underbrace{\sum_{t =1}^d \frac{4n^2 \Delta_t^2}{\theta_t^2}}_{U_2} +  \underbrace{\sum_{t =1}^d \frac{4n^3 \Delta_t^2 p_0(t)}{\theta_t^2} }_{U_3} + \underbrace{\sum_{t =1}^d \frac{- 4n^3 \Delta_t^3}{\theta_t^2}}_{U_4}.
\end{align*}
Consequently, it suffices to verify that,
\begin{align*}
\sum_{i=1}^4 \frac{2 \sqrt{U_i/\zeta}}{\mathbb{E}_p[T]} \leq 1. 
\end{align*}
for $i = \{1,2,3,4\}$ and we do this by bounding each of these terms in turn.
For the first term we follow a similar argument to the one dealing with the first condition,
\begin{align*}
\frac{2 \sqrt{U_1/\zeta}}{\mathbb{E}_p[T]} \leq \frac{8 d \sqrt{ \sum_{t =1}^d \frac{ p_0(t)^2}{\theta_t^2}}}{\sqrt{\zeta} n \critradn^2} \leq \frac{8 \sqrt{d} }{\sqrt{\zeta} n \critradn^2} \leq \frac{1}{4}.
\end{align*}
For the second term,
\begin{align*}
\frac{2 \sqrt{U_2/\zeta}}{\mathbb{E}_p[T]} \leq \frac{4 \sqrt{ \frac{1}{\zeta} \sum_{t=1}^d \frac{n^2 \Delta_t^2}{\theta_t^2} }}{\mathbb{E}_p[T]} \leq  \frac{4 \sqrt{ \frac{1}{\zeta} \sum_{t=1}^d \frac{n^2 \Delta_t^2}{\theta_t} }}{\mathbb{E}_p[T]} =  \frac{4}{ \sqrt{\zeta \mathbb{E}_p[T]} }.
\end{align*}
Using Equation~\eqref{eqn:lbmean} we obtain that,
\begin{align*}
\mathbb{E}_p[T] \geq  \frac{n^2\critradn^2}{2d},
\end{align*}
which in turn yields that,
\begin{align*}
\frac{2 \sqrt{U_2/\zeta}}{\mathbb{E}_p[T]} \leq \frac{8 \sqrt{d}}{n \critradn \sqrt{\zeta}} \leq \frac{1}{4}.
\end{align*}
Turning our attention to the third term we obtain that,
\begin{align*}
\frac{2 \sqrt{U_3/\zeta}}{\mathbb{E}_p[T]} = \frac{4\sqrt{  \frac{1}{\zeta}\sum_{t =1}^d \frac{n^3 \Delta_t^2 p_0(t)}{\theta_t^2} }}{\mathbb{E}_p[T]} \leq \frac{4\sqrt{  \frac{n}{d \zeta}\sum_{t =1}^d \frac{n^2 \Delta_t^2}{\theta_t} }}{\mathbb{E}_p[T]} = \frac{4 \sqrt{ \frac{n}{d \zeta}} }{\sqrt{\mathbb{E}_p[T]}}.
\end{align*}
Using the lower bound on the mean we obtain that,
\begin{align*}
\frac{2 \sqrt{U_3/\zeta}}{\mathbb{E}_p[T]} \leq \frac{8  }{n \critradn \sqrt{\zeta}} \leq \frac{1}{4}.
\end{align*}
For the final term, 
\begin{align*}
\frac{2 \sqrt{U_4/\zeta}}{\mathbb{E}_p[T]} \leq \frac{4\sqrt{  \frac{1}{\zeta}\sum_{t =1}^d \frac{n^3 |\Delta_t^3| }{\theta_t^2} }}{\mathbb{E}_p[T]} \leq \frac{4\sqrt{  \frac{n^3}{\zeta}\sum_{t =1}^d \frac{|\Delta_t^3| }{\theta_t^2} }}{\mathbb{E}_p[T]} \leq  \frac{4\sqrt{  \frac{1}{\zeta}\left(  \sum_{i=1}^d \frac{n^2\Delta_i^2}{\theta_i^{4/3}} \right)^{3/2}}}{\mathbb{E}_p[T]},
\end{align*}
where the last step uses the monotonicity of the $\ell_p$ norms. Observing that $\theta_i \geq 1$, we have
\begin{align*}
\frac{2 \sqrt{U_4/\zeta}}{\mathbb{E}_p[T]} \leq  \frac{4\sqrt{  \frac{1}{\zeta}\left(  \sum_{i=1}^d \frac{n^2\Delta_i^2}{\theta_i} \right)^{3/2}}}{\mathbb{E}_p[T]} =  \frac{4\sqrt{  \frac{1}{\zeta}}}{\mathbb{E}_p[T]^{1/4}} \leq \frac{ 8 d^{1/4}} { \sqrt{\zeta} \critradn^{1/2} \sqrt{n}} \leq \frac{1}{4}. 
\end{align*}
This completes the proof.

\subsection{Proof of Theorem~\ref{thm:valup}}
Recall the definition of $\mathcal{B}_{\sigma}$ in~\eqref{eqn:bulkdef}.
We define:
\begin{align}
\label{eqn:bulkdev}
\Delta_{\mathcal{B}_{\sigma}} = \sum_{i \in \mathcal{B}_{\sigma}} |p_0(i) - p(i)|,
\end{align}
and
\begin{align}
\label{eqn:pmin}
p_{\min,\sigma} = \min_{i \in {\cal B}_\sigma} p_0(i).
\end{align}
Our main results concern the combined test $\phi_V$ in~\eqref{eqn:valtest}. It is easy to verify that the size of this test is at most $\alpha$ so it only remains to control its power.
We first provide a general result that allows for a range of possible values for the parameter $\vparam$.
\begin{lemma}
\label{lem:valinter}
For any $\sigma \leq \frac{\critradn}{8}$, if 
\begin{align*}
n \geq 2 \max \left\{ \frac{2}{\alpha}, \frac{1}{\zeta} \right\} \max\left\{ \frac{1}{\sigma}, \frac{4096 V_{\sigma/2}(p_0)}{ \critradn^2} \right\},
\end{align*}
then the Type II error $P(\phi_V = 0) \leq \zeta.$
\end{lemma}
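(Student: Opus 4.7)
Since $\phi_V = \max\{\tailtest(\sigma,\alpha/2),\phi_{2/3}(\sigma,\alpha/2)\}$, a union bound gives size $\le \alpha$, so it suffices to show that under any $p\in\multclass$ with $\|p-p_0\|_1\ge\critradn$, at least one of the two sub-tests rejects with probability $\ge 1-\zeta$. Writing $\Delta_i = p(i)-p_0(i)$, the identity $\sum_i\Delta_i=0$ yields $|\Delta_1|\le\sum_{i>1}|\Delta_i|$, so the total variation restricted to bulk and tail already captures at least half of $\|p-p_0\|_1$:
\[
\sum_{i\in\bulk{\sigma}}|\Delta_i| + \sum_{i\in\epstail{\sigma}}|\Delta_i| \;\ge\; \critradn/2.
\]
I would then case split on whether the tail sum exceeds $3\critradn/8$ (tail-dominated) or the bulk sum exceeds $\critradn/8$ (bulk-dominated); at least one must hold.

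\textbf{Tail-dominated case.} The key observation is $\sum_{i\in\epstail{\sigma}}\Delta_i^-\le P_0(\epstail{\sigma})\le\sigma$ — one cannot remove more mass from the tail than is present under $p_0$ — so combined with $\sigma\le\critradn/8$, the signed tail excess satisfies $\sum_{i\in\epstail{\sigma}}\Delta_i\ge 3\critradn/8 - 2\sigma \ge \critradn/8$. Under Poissonization, $T_1(\sigma)$ is a centred Poisson sum with mean $n\sum_{i\in\epstail{\sigma}}\Delta_i$ and variance $nP(\epstail{\sigma})$; a Chebyshev comparison with the threshold $t_1(\alpha/2,\sigma) = \sqrt{2nP_0(\epstail{\sigma})/\alpha}$ shows $\tailtest$ rejects with probability $\ge 1-\zeta$ as soon as $n\gtrsim\max(1/\alpha,1/\zeta)/\sigma$, yielding the first branch of the hypothesis.

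\textbf{Bulk-dominated case.} Applying Cauchy--Schwarz with weights $p_0(i)^{2/3}$,
\[
\Big(\sum_{i\in\bulk{\sigma}}|\Delta_i|\Big)^{\!2} \;\le\; \Big(\sum_{i\in\bulk{\sigma}}\Delta_i^2/p_0(i)^{2/3}\Big)\Big(\sum_{i\in\bulk{\sigma}}p_0(i)^{2/3}\Big),
\]
so $\mathbb{E}_p[T_2(\sigma)] = n^2\sum_{i\in\bulk{\sigma}}\Delta_i^2/p_0(i)^{2/3}\gtrsim n^2\critradn^2/\truncnorm{\sigma}{p_0}^{2/3}$. For the variance, expanding $((X_j-np_0(j))^2-X_j)/p_0(j)^{2/3}$ under Poissonization shows that the centering cancels the leading null Poisson term, leaving a null variance of order $n^2\truncnorm{\sigma}{p_0}^{2/3}$ — matching $t_2^2$ up to constants — with alternative-specific contributions absorbed into the mean via Hölder's inequality. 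A final Chebyshev bound then gives the required power whenever $n\gtrsim \truncnorm{\sigma/2}{p_0}/\critradn^2 \cdot \max(1/\alpha,1/\zeta)$, matching the second branch. The inflation from $\sigma$ to $\sigma/2$ in the truncation level provides slack to absorb alternative-dependent boundary terms.

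\textbf{Main obstacle.} The technical crux is controlling the variance of $T_2$ under $p$: upon expanding the Poisson moments, cubic terms of the form $n^3\sum\Delta_i^3/p_0(i)^{4/3}$ and cross terms $n^3\sum\Delta_i^2 p_0(i)^{1/3}$ appear, and each must be shown to be dominated by either the squared mean $\mathbb{E}_p[T_2]^2$ or the null variance $n^2\truncnorm{\sigma/2}{p_0}^{2/3}$. I would follow the approach of \citet{valiant14}, case-splitting on the sign and magnitude of each $\Delta_i$ relative to $p_0(i)$; the relaxation from $\truncnorm{\sigma}{p_0}$ to $\truncnorm{\sigma/2}{p_0}$ in the hypothesis is exactly what provides room for these bounds to go through cleanly.
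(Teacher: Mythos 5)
Your proof is correct and follows essentially the same strategy as the paper: split the alternatives into a tail-dominated and a bulk-dominated case, handle the tail via Chebyshev on the Poissonized count statistic, and defer the delicate variance control for $T_2$ to the cited results from \citet{valiant14}, with the $\sigma$-to-$\sigma/2$ relaxation in $V$ absorbing the boundary terms. The only cosmetic difference is the case-split threshold — you use $\sum_{\epstail{\sigma}}|\Delta_i| \ge 3\critradn/8$ where the paper uses $\ge 3\sigma$ — but since $\sigma \le \critradn/8$ your case is a subset of theirs, both partitions cover all alternatives, and both yield $P(\epstail{\sigma}) \gtrsim \sigma$ and bulk deviation $\ge \critradn/8$, so the downstream bounds go through identically.
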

Taking this lemma as given, it is straightforward to verify the result of Theorem~\ref{thm:valup}. In particular, if we take $\sigma = \critradn/8,$ then we recover the result of the theorem.

\noindent {\bf Proof of Lemma~\ref{lem:valinter}: } As a preliminary, we state two technical results from \cite{valiant14}. The following result is Lemma~6 in \cite{valiant14}.
\begin{lemma}
\label{lem:valiant_main}
For any $c \geq 1$, suppose that $n \geq c \max \left\{ \frac{V_{\sigma}(p_0)^{1/3}}{p_{\min,\sigma}^{1/3} \Delta_{\mathcal{B}_{\sigma}}}, \frac{ V_{\sigma}(p_0) }{ \Delta_{\mathcal{B}_{\sigma}}^2 }\right\},$
then we have that
\begin{align*}
\text{Var}_{p}(T_2(\sigma)) \leq \frac{16}{c} \left[\mathbb{E}_p(T_2(\sigma)) \right]^2.
\end{align*}
\end{lemma}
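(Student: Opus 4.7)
Under Poissonization the counts $\{X_j\}_{j \in \mathcal{B}_\sigma}$ are independent Poisson random variables, so $T_2(\sigma)$ is a sum of independent terms and both its mean and variance split cell-by-cell. My plan is: (i) compute the per-cell mean and variance of $(X_j - np_0(j))^2 - X_j$ in closed form using low-order Poisson moments; (ii) lower-bound $\mathbb{E}_p[T_2(\sigma)]$ in terms of $\Delta_{\mathcal{B}_\sigma}$ and $V_\sigma(p_0)$ via Cauchy--Schwarz; (iii) decompose $\text{Var}_p(T_2(\sigma))$ into four manageable pieces and bound each in terms of $(\mathbb{E}_p[T_2(\sigma)])^2$; (iv) read off the two scaling conditions on $n$ in the hypothesis as the requirements needed to close these four bounds with slack $16/c$.

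For (i), a short calculation using the Poisson central moments $\mathbb{E}[(X - \lambda)^k]$ for $k \le 4$ gives, for $X \sim \poi(\lambda)$,
\begin{align*}
\mathbb{E}\bigl[(X - \mu)^2 - X\bigr] = (\lambda - \mu)^2, \qquad \text{Var}\bigl((X - \mu)^2 - X\bigr) = 4(\lambda - \mu)^2 \lambda + 2\lambda^2.
\end{align*}
Setting $\lambda = np(j)$, $\mu = np_0(j)$, $\Delta_j := p(j) - p_0(j)$, and summing over $j \in \mathcal{B}_\sigma$ yields
\begin{align*}
\mathbb{E}_p[T_2(\sigma)] = n^2 \sum_j \frac{\Delta_j^2}{p_0(j)^{2/3}}, \qquad \text{Var}_p(T_2(\sigma)) = \sum_j \frac{4 n^3 \Delta_j^2 p(j) + 2 n^2 p(j)^2}{p_0(j)^{4/3}}.
\end{align*}
Step (ii) is then immediate: applying Cauchy--Schwarz to the factorization $|\Delta_j| = (|\Delta_j|/p_0(j)^{1/3}) \cdot p_0(j)^{1/3}$ yields $\Delta_{\mathcal{B}_\sigma}^2 \leq V_\sigma(p_0)^{2/3} \cdot \mathbb{E}_p[T_2(\sigma)]/n^2$, i.e., $\mathbb{E}_p[T_2(\sigma)] \geq n^2 \Delta_{\mathcal{B}_\sigma}^2 / V_\sigma(p_0)^{2/3}$.

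For (iii), expanding $p(j) = p_0(j) + \Delta_j$ in the first variance term and $p(j)^2 \leq 2p_0(j)^2 + 2\Delta_j^2$ in the second splits the variance into four non-negative pieces,
\begin{align*}
4n^2 V_\sigma(p_0)^{2/3}, \quad 4n^3 \sum_j \frac{\Delta_j^2}{p_0(j)^{1/3}}, \quad 4n^3 \sum_j \frac{|\Delta_j|^3}{p_0(j)^{4/3}}, \quad 4n^2 \sum_j \frac{\Delta_j^2}{p_0(j)^{4/3}}.
\end{align*}
Each piece is then controlled by $(\mathbb{E}_p[T_2(\sigma)])^2$ using three recurring tools: (a) the inequality $\sum_j \Delta_j^4/p_0(j)^{4/3} \leq (\mathbb{E}_p[T_2(\sigma)]/n^2)^2$, obtained from $\sum a_j^2 \leq (\sum a_j)^2$ applied to the non-negative terms $a_j = \Delta_j^2/p_0(j)^{2/3}$; (b) a further Cauchy--Schwarz against $p_0(j)^{2/3}$ to bound the second piece by $V_\sigma(p_0)^{1/3} \mathbb{E}_p[T_2(\sigma)]/n^2$; and (c) the crude estimate $1/p_0(j)^{2/3} \leq 1/p_{\min,\sigma}^{2/3}$ for $j \in \mathcal{B}_\sigma$.

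Step (iv) is then bookkeeping: each of the four per-piece inequalities $\text{piece} \leq 4 (\mathbb{E}_p[T_2(\sigma)])^2/c$ translates, via the mean lower bound from step (ii), into a lower bound on $n$. Pieces 1, 2, and 4 collapse to $n \gtrsim V_\sigma(p_0)/\Delta_{\mathcal{B}_\sigma}^2$ (the second condition in the hypothesis), while the cubic piece 3 produces $n \gtrsim V_\sigma(p_0)^{1/3}/(p_{\min,\sigma}^{1/3} \Delta_{\mathcal{B}_\sigma})$ (the first condition). The main obstacle, and the reason the first condition is needed at all, is this cubic piece: there is no way to dominate $\sum_j |\Delta_j|^3/p_0(j)^{4/3}$ purely in terms of $\mathbb{E}_p[T_2(\sigma)]$ without invoking a uniform lower bound on $p_0(j)$ over $\mathcal{B}_\sigma$, which is precisely what the $p_{\min,\sigma}$-dependent hypothesis supplies.
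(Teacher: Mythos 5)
The paper itself does not prove this lemma: it states it as a direct citation of Lemma~6 from Valiant and Valiant, so there is no in-text argument to compare against. Your blind reconstruction is the natural one and, up to two bookkeeping issues, it is sound. The per-cell Poisson moment computation ($\mathbb{E}[(X-\mu)^2 - X] = (\lambda-\mu)^2$ and $\mathrm{Var}[(X-\mu)^2 - X] = 4(\lambda-\mu)^2\lambda + 2\lambda^2$), the Cauchy--Schwarz mean lower bound $\mathbb{E}_p[T_2(\sigma)] \geq n^2\Delta_{\mathcal{B}_\sigma}^2/V_\sigma(p_0)^{2/3}$, and the four-piece variance decomposition are all correct, and the tools (a)--(c) you list do suffice to close all four pieces.

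Two things need fixing before this is airtight. First, you assign piece 4, $4n^2\sum_j\Delta_j^2/p_0(j)^{4/3}$, to the second ($V_\sigma(p_0)/\Delta_{\mathcal{B}_\sigma}^2$) condition, but that is wrong: the only way to control this piece is to extract a $1/p_{\min,\sigma}^{2/3}$ factor, giving $P_4 \leq 4\,\mathbb{E}_p T_2/p_{\min,\sigma}^{2/3}$, and this closes only via the first, $p_{\min,\sigma}$-dependent condition (the second condition alone does not imply $\mathbb{E}_p T_2 \gtrsim c/p_{\min,\sigma}^{2/3}$). Second, and more substantively, applying your tool (c) to piece 3 in the obvious way --- pulling out $1/p_{\min,\sigma}^{2/3}$ and then bounding $\sum_j |\Delta_j|^3/p_0(j)^{2/3}$ by $\Delta_{\mathcal{B}_\sigma}\cdot\mathbb{E}_p T_2/n^2$ --- produces the requirement $n \gtrsim V_\sigma(p_0)^{2/3}/\bigl(p_{\min,\sigma}^{2/3}\Delta_{\mathcal{B}_\sigma}\bigr)$, which is \emph{strictly stronger} than the lemma's $n \gtrsim V_\sigma(p_0)^{1/3}/\bigl(p_{\min,\sigma}^{1/3}\Delta_{\mathcal{B}_\sigma}\bigr)$ because $V_\sigma(p_0) \geq p_{\min,\sigma}$. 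To land on the stated exponents you need a nested argument: e.g., split $|\Delta_j|^3/p_0(j)^{4/3} = \bigl(|\Delta_j|/p_0(j)^{1/3}\bigr)\cdot\bigl(\Delta_j^2/p_0(j)\bigr)$, apply Cauchy--Schwarz, then tool (c) to the resulting $\sum_j\Delta_j^4/p_0(j)^2$, then tool (a); or, equivalently, use $\ell_3\leq\ell_2$ monotonicity of norms applied to $|\Delta_j|/p_0(j)^{1/3}$ to get $\sum_j|\Delta_j|^3/p_0(j) \leq (\mathbb{E}_p T_2/n^2)^{3/2}$ and pull out only a single $p_{\min,\sigma}^{-1/3}$. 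Either route gives $P_3 \leq 4(\mathbb{E}_p T_2)^{3/2}/p_{\min,\sigma}^{1/3}$, which closes since the hypothesis together with your mean lower bound gives $\mathbb{E}_p T_2 \geq c^2/p_{\min,\sigma}^{2/3}$.
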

\noindent The following result appears in the proof of Proposition~1 of \cite{valiant14}.
\begin{lemma}
\label{lem:nrelate}
For any $c \geq 1$, suppose that,
\begin{align*}
n \geq 2 c \frac{ V_{\sigma/2}(p_0) }{ \Delta_{\mathcal{B}_{\sigma}}^2},
\end{align*}
then we have that,
\begin{align*}
n \geq c \max \left\{ \frac{V_{\sigma}(p_0)^{1/3}}{p_{\min,\sigma}^{1/3} \Delta_{\mathcal{B}_{\sigma}}}, \frac{ V_{\sigma}(p_0) }{ \Delta_{\mathcal{B}_{\sigma}}^2 }\right\}.
\end{align*}
\end{lemma}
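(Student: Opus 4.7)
Plan: The statement is a purely algebraic implication, so my strategy is to split the $\max$ and verify each term separately under the hypothesis $n \geq 2c V_{\sigma/2}(p_0)/\Delta_{\mathcal{B}_\sigma}^2$. Both inequalities follow once we control the relationship between $V_{\sigma/2}(p_0)$, $V_{\sigma}(p_0)$, $p_{\min,\sigma}$ and $\Delta_{\mathcal{B}_\sigma}$; the first is a routine monotonicity argument and the second is the genuine content.

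The easy term is $V_\sigma(p_0)/\Delta_{\mathcal{B}_\sigma}^2$. By definition $\mathcal{Q}_{\sigma/2}\subseteq\mathcal{Q}_\sigma$ (lowering the tail threshold can only remove indices from the tail), so $\mathcal{B}_{\sigma/2}\supseteq\mathcal{B}_\sigma$. Consequently
\[
V_{\sigma/2}(p_0)^{2/3} \;=\; \sum_{i\in\mathcal{B}_{\sigma/2}} p_0(i)^{2/3} \;\geq\; \sum_{i\in\mathcal{B}_\sigma} p_0(i)^{2/3} \;=\; V_\sigma(p_0)^{2/3},
\]
and after raising to the $3/2$ power, $V_{\sigma/2}(p_0) \geq V_\sigma(p_0)$. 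Plugging into the hypothesis and using $c\geq 1$ immediately gives $n \geq 2c V_\sigma(p_0)/\Delta_{\mathcal{B}_\sigma}^2 \geq c V_\sigma(p_0)/\Delta_{\mathcal{B}_\sigma}^2$.

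The harder term is $V_\sigma(p_0)^{1/3}/(p_{\min,\sigma}^{1/3}\Delta_{\mathcal{B}_\sigma})$. After cross-multiplying, this reduces to showing
\[
2\,V_{\sigma/2}(p_0)\,p_{\min,\sigma}^{1/3} \;\geq\; V_\sigma(p_0)^{1/3}\,\Delta_{\mathcal{B}_\sigma}.
\]
My plan is to combine two elementary bounds: first, the Hölder-type estimate $V_\sigma(p_0)^{2/3} = \sum_{i\in\mathcal{B}_\sigma} p_0(i)\cdot p_0(i)^{-1/3} \leq p_{\min,\sigma}^{-1/3}\sum_{i\in\mathcal{B}_\sigma} p_0(i) \leq p_{\min,\sigma}^{-1/3}$, which controls $V_\sigma(p_0)^{1/3}$ from above in terms of $p_{\min,\sigma}$; and second, the triangle-inequality bound $\Delta_{\mathcal{B}_\sigma}\leq 2$ coming from $\|p-p_0\|_1\leq 2$. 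Substituting these gives an upper bound on the right-hand side, and I will then argue that the left-hand side dominates it via the fact that $V_{\sigma/2}(p_0)$ contains additional bulk terms (those indices in $\mathcal{B}_{\sigma/2}\setminus\mathcal{B}_\sigma$, i.e.\ the boundary tail elements whose cumulative mass lies between $\sigma/2$ and $\sigma$) and hence carries enough slack over $V_\sigma(p_0)$.

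The main obstacle will be producing a sufficiently sharp quantitative link between $V_{\sigma/2}(p_0)$ and the pair $(V_\sigma(p_0), p_{\min,\sigma})$; naive use of $V_{\sigma/2}\geq V_\sigma$ is too weak. The key observation I plan to leverage is that the indices added to the bulk when moving from $\sigma$ to $\sigma/2$ each have probability bounded above by $p_{\min,\sigma}$ yet collectively inject enough $2/3$-mass into $V_{\sigma/2}$ to absorb the factor $\Delta_{\mathcal{B}_\sigma}/2$. Once this comparison is established, both inequalities in the $\max$ follow, completing the proof.
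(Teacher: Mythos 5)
Your treatment of the second term is fine: $\mathcal{B}_{\sigma/2}\supseteq\mathcal{B}_\sigma$ gives $V_{\sigma/2}(p_0)\geq V_\sigma(p_0)$, and that part matches what is needed. The reduction of the first term to $2V_{\sigma/2}(p_0)\,p_{\min,\sigma}^{1/3}\geq V_\sigma(p_0)^{1/3}\,\Delta_{\mathcal{B}_\sigma}$ is also correct. But at that point your proposal stops being a proof: the decisive step --- that the indices in $B:=\mathcal{B}_{\sigma/2}\setminus\mathcal{B}_\sigma$ ``inject enough $2/3$-mass to absorb the factor $\Delta_{\mathcal{B}_\sigma}/2$'' --- is only announced, never carried out, and it cannot be carried out along the route you sketch. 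The sharpest thing the added indices give you (each has $p_0(i)\leq p_{\min,\sigma}$, and $p_{\min,\sigma}+\sum_{i\in B}p_0(i)\geq \sigma/2$, the same device the paper uses in the proof of Theorem~\ref{thm:max}) is
\begin{align*}
V_{\sigma/2}^{2/3}(p_0)\;\geq\;\frac{\sigma}{2\,p_{\min,\sigma}^{1/3}},
\qquad\text{hence}\qquad
2V_{\sigma/2}(p_0)\,p_{\min,\sigma}^{1/3}\;\geq\;\sigma\,V_{\sigma/2}^{1/3}(p_0)\;\geq\;\sigma\,V_\sigma^{1/3}(p_0),
\end{align*}
which closes the argument only when $\Delta_{\mathcal{B}_\sigma}\lesssim\sigma$. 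Your auxiliary bounds $\Delta_{\mathcal{B}_\sigma}\leq 2$ and $V_\sigma^{2/3}(p_0)\leq p_{\min,\sigma}^{-1/3}$ push in the wrong direction: after substituting them you would need $V_{\sigma/2}(p_0)\geq p_{\min,\sigma}^{-1/2}$, which is false in general.

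The deeper issue is that the implication, read as a free-standing algebraic statement with no relation imposed between $\Delta_{\mathcal{B}_\sigma}$ and $\sigma$, is not true, so no self-contained manipulation of the displayed quantities can prove it. For instance take $p_0=(0.9,0.05,0.05)$, $\sigma=0.05$, and $p=(0,1,0)$, so $\mathcal{B}_\sigma=\{2\}$, $\Delta_{\mathcal{B}_\sigma}=0.95$, $V_\sigma(p_0)=p_{\min,\sigma}=0.05$, $V_{\sigma/2}(p_0)=2^{3/2}\cdot 0.05\approx 0.14$; with $c=1$ and $n=1$ the hypothesis $n\geq 2V_{\sigma/2}/\Delta_{\mathcal{B}_\sigma}^2\approx 0.31$ holds while the first term of the conclusion is $1/0.95>1$. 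The paper itself offers no proof of this lemma --- it is quoted from the proof of Proposition~1 of \citet{valiant14} --- and there the inequality is established together with the side conditions in force in that argument (in particular the companion requirement that $n$ also exceed a multiple of $1/\sigma$, and the way $\sigma$, $\Delta_{\mathcal{B}_\sigma}$ and $\critradn$ are tied together, exactly as in Lemma~\ref{lem:valinter}). Any complete proof must import those contextual facts; your plan, which treats the statement as a purely algebraic consequence of $\Delta_{\mathcal{B}_\sigma}\leq 2$ and monotonicity of $V_\sigma$, has a genuine gap precisely at its central step.
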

With these two results in place, we can now complete the proof. 
We divide the space of alternatives into two sets:
\begin{align*}
\mathcal{S}_{1} &= 
\left\{p: \|p - p_0\|_1 \geq \critradn, 
\sum_{i \in \epstail{\sigma}} |p_0(i) - p(i)| \geq 3 \sigma \right\} \\
\mathcal{S}_{2} &= 
\left\{p: \|p - p_0\|_1 \geq \critradn, 
\sum_{i \in \epstail{\sigma}} |p_0(i) - p(i)| < 3 \sigma  \right\}.
\end{align*}
In order to show desired result it then suffices to show that
when $p \in \mathcal{S}_1$, $P(\phi_1  = 0) \leq \zeta$, and
that when $p \in \mathcal{S}_2$, $P(\phi_2 = 0) \leq \zeta$. We verify each of these claims in turn.

\noindent {\bf When $p \in \mathcal{S}_1$: } In this case, we have that 
$P(\epstail{\sigma}) \geq 2 \sigma.$
Under the alternate we have that 
$T_1(\sigma) \sim \text{Poi}(nP(\epstail{\sigma})) - nP_0(\epstail{\sigma})$.
This yields,
\begin{align}
\label{eqn:evetail}
P(\phi_{\text{tail}} = 0) \leq  P\left( \text{Poi}(nP(\epstail{\sigma})) < \rho n P(\epstail{\sigma})\right),
\end{align}
where 
\begin{align*}
\rho = \frac{P_0(\epstail{\sigma})}{P(\epstail{\sigma})} +  \frac{1}{P(\epstail{\sigma})} \sqrt{ \frac{P_0(\epstail{\sigma})}{n \alpha}}.
\end{align*}
Provided $\rho \leq 1$ we obtain via Chebyshev's inequality that,
\begin{align*}
P(\phi_{\text{tail}} = 0) \leq \frac{1}{n(1 - \rho)^2 P_1(\epstail{\sigma})}.
\end{align*}
We further have that,
\begin{align*}
\rho \leq \frac{1}{2} \left[ 1 +  \frac{1}{ \sqrt{n \alpha \sigma} }\right].
\end{align*}
Under the conditions that,
\begin{align*}
n \geq \frac{4}{ \alpha \sigma},
\end{align*}
we obtain that $\rho \leq 1/2$, which yields that,
\begin{align*}
P(\phi_{\text{tail}} = 0) \leq \frac{2}{n  \sigma} \leq \zeta,
\end{align*}
where the final inequality uses the condition on $n$.

\noindent {\bf When $p \in \mathcal{S}_2$: } In this case, we first observe that the 
bulk deviation must be sufficiently large.
Concretely, at most $\critradn/2$ deviation can occur in the largest
element and at most $3 \sigma$ occurs in the tail, i.e.
\begin{align*}
\Delta_{\mathcal{B}_{\sigma}} \geq \frac{\critradn}{2} - 3 \sigma \geq \frac{\critradn}{8}.
\end{align*}
Our next goal will be to upper bound the test threshold, $t_2(\alpha/2,\sigma)$. In particular,
we claim that,
\begin{align}
\label{eqn:threshold_bound}
t_2(\alpha/2,\sigma) \leq \sqrt{\frac{2 \text{Var}(T_2(\sigma))}{\alpha}} 
\end{align}
Taking this claim as given for now and supposing that our sample size can
be written as
$n = c \max \left\{ \frac{T_{\sigma}(p_0)^{1/3}}{p_{\min,\sigma}^{1/3} \Delta_{\mathcal{B}_{\sigma}}}, \frac{ T_{\sigma}(p_0) }{ \Delta_{\mathcal{B}_{\sigma}}^2 }\right\},$ for some $c \geq 1$,
we can use Lemma~\ref{lem:valiant_main} and Chebyshev's inequality to obtain that, 
\begin{align*}
P(\phi_{2/3} = 0) \leq \frac{1}{ (\sqrt{\frac{c}{16}} - \sqrt{\frac{2}{\alpha}})^2},
\end{align*}
provided that $\sqrt{\frac{c}{16}} \geq \sqrt{\frac{2}{\alpha}}.$
Thus, it suffices to ensure that,
\begin{align*}
n \geq 64 \max\left\{ \frac{2}{\alpha}, \frac{1}{\zeta} \right\} \max \left\{ \frac{T_{\sigma}(p_0)^{1/3}}{p_{\min,\sigma}^{1/3} \Delta_{\mathcal{B}_{\sigma}}}, \frac{ T_{\sigma}(p_0) }{ \Delta_{\mathcal{B}_{\sigma}}^2 }\right\},
\end{align*}
to obtain that $P(\phi_{2/3} = 0) \leq \zeta$ as desired. Using Lemma~\ref{lem:nrelate}, we have that this holds under the condition on $n$. It remains to verify the claim in~\eqref{eqn:threshold_bound}. In order to do so we just note that the variance of the statistic is minimized at the null, i.e.
\begin{align*}
\text{Var}(T_2(\sigma)) \geq \sum_{i \in \mathcal{B}_{\sigma}} 2n^2 p_0(i)^{2/3} = \frac{\alpha t_2^2(\alpha/2,\sigma)}{2}.
\end{align*}
as desired.

\subsection{Proof of Theorem~\ref{thm:max}}
Fix any multinomial $p$ on $[d]$, and 
suppose we denote $\Delta_i = p_0(i) - p(i)$, then a straightforward computation shows that,
\begin{align}
\label{eqn:altmean}
\mathbb{E}_p[T_j] &= n^2 \sum_{t \in S_j} \Delta_t^2, 
\end{align}
\begin{align}
\label{eqn:altvar}
\mathrm{Var}_p[T_j] &= \sum_{t \in S_j}  \left[ 2n^2 p_0(t)^2 + 2n^2 \Delta_t^2 - 4 n^2 \Delta_t p_0(t) + 4n^3 \Delta_t^2 p_0(t) - 4n^3 \Delta_t^3 \right].
\end{align}
This in turn yields that the null variance of $T_j$ is simply $\mathrm{Var}_0[T_j] = 2n^2 \sum_{t \in S_j}  p(t)^2.$ By Chebyshev's inequality we then obtain that:
\begin{align*}
P_0(T_j > t_j) \leq \alpha/k,
\end{align*}
which together with the union bound yields,
\begin{align*}
P_0(\maxtest = 1) \leq \alpha.
\end{align*}
As in the proof of Theorem~\ref{thm:valup} we consider two cases: when $p \in \mathcal{S}_1$ and when $p \in \mathcal{S}_2.$ Since the composite test includes the tail test, the analysis of the case when $p \in \mathcal{S}_1$ is identical to before.  
Now, we consider the case when $p \in \mathcal{S}_2$.

We have further partitioned the bulk of the distribution into at most $k$ sets, so that at least
one of the sets $S_j$ must witness a discrepancy of at least $\critradn/(8k)$, i.e. when
$p \in \mathcal{S}_2$ we have that,
\begin{align*}
\sup_{j} \sum_{i \in S_j} |p_0(i) - p(i)| \geq \frac{\critradn}{8k}.
\end{align*}
Let $j^*$ denote the set that witnesses this discrepancy. 
We focus the rest of the proof on this fixed set $S_{j^*}$ and show that under the alternate $T_{j^*} > 
t_{j^*}$ with sufficiently high probability. Suppose that for $j^*$ we can verify the following two conditions:
\begin{align}
\label{eqn:condonemult2}
t_{j^*} &\leq \frac{\mathbb{E}_p[T_{j^*}]}{2} \\
\label{eqn:condtwomult2}
\mathbb{E}_p[T_{j^*}] &\geq2 \sqrt{\frac{\mathrm{Var}_p[T_{j^*}]}{\zeta}},
\end{align}
then we obtain that $P(\maxtest = 0) \leq \zeta.$ To see this, observe that
\begin{align*}
P(\maxtest = 0) &\leq P( T_{j^*} - \mathbb{E}_p[T_{j^*}] < t_{j^*} - \mathbb{E}_p[T_{j^*}]) \\
&\leq P( (T_{j^*} - \mathbb{E}_p[T_{j^*}])^2 < (t_{j^*} - \mathbb{E}_p[T_{j^*}])^2) \\
&\leq \frac{\mathrm{Var}_p[T_{j^*}]}{ (t_{j^*} - \mathbb{E}_p[T_{j^*}])^2} \leq \frac{4 \mathrm{Var}_p[T_{j^*}]}{  \mathbb{E}_p[T_{j^*}]^2} \leq \zeta. 
\end{align*}
Consequently, we focus the rest of the proof on showing the above two conditions.
We let $d_{j^*}$ denote the size of $S_{j^*}$.

\noindent {\bf Condition in Equation~\eqref{eqn:condonemult2}: } Observe that,
\begin{align}
\label{eqn:lbxx}
\sum_{i \in S_{j^*}} \Delta_i^2 \geq \frac{\left(\sum_{i \in S_{j^*}} |\Delta_i|\right)^2}{d_{j^*}} \geq
\frac{\critradn^2}{64 k^2 d_{j^*}}.
\end{align}
Using Equations~\eqref{eqn:thresh} and~\eqref{eqn:altmean}, it suffices to check that,
\begin{align*}
n \sqrt{ \frac{2k \sum_{i \in S_{j^*}} p_0(i)^2}{\alpha} } \leq n^2 \sum_{i \in S_{j^*}} \Delta_j^2,
\end{align*}
and applying the lower bound in Equation~\eqref{eqn:lbxx} it suffices if,
\begin{align*}
\frac{\critradn^2}{64 k^2 d_{j^*}} \geq \frac{1}{n} \sqrt{ \frac{2k \sum_{i \in S_{j^*}} p_0(i)^2}{\alpha} }. 
\end{align*}
Denote the maximum and minimum entry of the multinomial on $S_{j^*}$ as $b_{j^*}$ and $a_{j^*}$ 
respectively. Noting that on each bin the multinomial is roughly uniform one can further observe that,
\begin{align*}
d_{j^*} \sqrt{  \sum_{i \in S_{j^*}} p_0(i)^2} \leq d_{j^*}^{3/2} b_{j^*} \leq 2 d_{j^*}^{3/2} a_{j^*} \leq
2 \truncnorm{\critradn/8}{p_0}.
\end{align*}
This yields that the first condition is satisfied if,
\begin{align*}
\critradn^2 \geq \frac{256 k^{5/2}}{n} \frac{\truncnorm{\critradn/8}{p_0} }{\sqrt{\alpha} },
\end{align*}
which is indeed the case.

\noindent {\bf Condition in Equation~\eqref{eqn:condtwomult2}: } We proceed by upper bounding the variance under the alternate. Using Equation~\eqref{eqn:altvar} we have,
\begin{align*}
\mathrm{Var}_p[T_{j^*}] &= \sum_{t \in S_{j^*}}  \left[ 2n^2 p_0(t)^2 + 2n^2 \Delta_t^2 - 4 n^2 \Delta_t p_0(t) + 4n^3 \Delta_t^2 p_0(t) - 4n^3 \Delta_t^3 \right] \\
&\leq  \sum_{t \in S_{j^*}} \left[ 4n^2 p_0(t)^2 + 4n^2 \Delta_t^2+ 4n^3 \Delta_t^2 p_0(t) - 4n^3 \Delta_t^3 \right] \\
&\leq \underbrace{4 n^2 b_{j^*}^2 d_{j^*}}_{U_1} +  \underbrace{4n^2 \sum_{t \in S_{j^*}}  \Delta_t^2 }_{U_2} +  \underbrace{4n^3 b_{j^*}  \sum_{t \in S_{j^*}}  \Delta_t^2 }_{U_3} \underbrace{ - 4n^3 \sum_{t \in S_{j^*}} \Delta_t^3}_{U_4} .
\end{align*}
In order to check the desired condition, it suffices to verify that
\begin{align*}
\mathbb{E}_p[T_{j^*}] \geq 8 \sqrt{\frac{U_i}{\zeta}}, 
\end{align*}
for each $i \in \{1,2,3,4\}$. We consider these tasks in sequence. For the first term we obtain
that it suffices if, 
\begin{align*}
 \sum_{i \in S_{j^*}} \Delta_i^2  \geq \left(\frac{16 b_{j^*} d_{j^*}^{1/2} } {n \sqrt{ \zeta}}\right),
\end{align*}
and applying the lower bound in Equation~\eqref{eqn:lbxx}, and from some straightforward algebra it is sufficient to ensure that,
\begin{align*}
\critradn^2 \geq \frac{2048 k^2 \truncnorm{\critradn/8}{p_0} }{ n \sqrt{\zeta}},
\end{align*}
which is indeed the case. For the second term, some simple algebra yields that it suffices to have that,
\begin{align}
\label{eqn:boundtemp}
 \sum_{i \in S_{j^*}} \Delta_i^2 \geq \left( \frac{ 144}{ n^2 \zeta } \right).
\end{align}
In order to establish this, we need to appropriately lower bound $n$. Let $p_{\min}$ denote
the smallest entry in $\bulk{\critradn/8}$. For a sufficiently large universal constant $C > 0$, let us denote:
\begin{align*}
\theta_{k,\alpha} := C k^2 \left[ \sqrt{ \frac{k}{\alpha}} + \frac{1}{\zeta} \right].
\end{align*}
Then using the lower bound on $\critradn$ we obtain,
\begin{align*}
n \geq \frac{\theta_{k,\alpha} \truncnorm{\critradn/16}{p_0}}{\critradn^2} = 
\frac{\theta_{k,\alpha} \truncnorm{\critradn/16}{p_0}^{1/3} \left[ \sum_{i \in \bulk{\critradn/16}} p_0(i)^{2/3}\right]}{\critradn^2}.
\end{align*}
Now denote $B = \bulk{\critradn/16} \backslash \bulk{\critradn/8}$, then we have that,
\begin{align*}
p_{\min} + \sum_{i \in B} p_i \geq \critradn/16,
\end{align*}
so that,
\begin{align*}
\sum_{i \in \bulk{\critradn/16}} p_0(i)^{2/3} \geq \sum_{i \in B} p_0(i)^{2/3} + p_{\min}^{2/3} = \frac{1}{p_{\min}^{1/3}} \left[ \sum_{i \in B} p_0(i)^{2/3} p_{\min}^{2/3} + p_{\min} \right] \geq \frac{\critradn}{16 p_{\min}^{1/3}}.
\end{align*}
This gives the lower bound, 
\begin{align*}
n \geq \frac{\theta_{k,\alpha} \truncnorm{\critradn/16}{p_0}^{1/3}}{16 \critradn p_{\min}^{1/3}} \geq
\frac{\theta_{k,\alpha} \left(\sum_{t \in S_{j^*}} p_0(t)^{2/3} \right)^{1/2} }{16 \critradn p_{\min}^{1/3}} \geq
\frac{\theta_{k,\alpha} \sqrt{d_{j^*}}  }{16 \critradn}.
\end{align*}
Returning to the bound in Equation~\eqref{eqn:boundtemp}, and using the lower bound in Equation~\eqref{eqn:lbxx} we obtain that it suffices to ensure that
\begin{align*}
\frac{\critradn}{8 k \sqrt{d_{j^*}}} \geq \left( \frac{192 \critradn}{\theta_{k,\alpha} \sqrt{d_{j^*}} \sqrt{ \zeta} } \right),
\end{align*}
which is indeed the case. Turning our attention to the term involving $U_3$ we have,
that by some simple algebra it suffices to verify that,
\begin{align*}
\sum_{i \in S_{j^*}} \Delta_i^2 \geq \left( \frac{ 144 b_{j^*}}{ n \zeta } \right).
\end{align*}
Using the lower bound in Equation~\eqref{eqn:lbxx} we obtain that it is sufficient to ensure,
\begin{align*}
\frac{\critradn^2}{64 k^2 d_{j^*} } \geq  \left( \frac{ 144 b_{j^*}}{ n \zeta } \right),
\end{align*}
and with the observation that $d_{j^*} b_{j^*} \leq 2 d_{j^*}^{3/2} a_{j^*} \leq 2 \truncnorm{\critradn/8}{p_0}$ we obtain,
\begin{align*}
\critradn^2 \geq  \left( \frac{ 18432 k^2 \truncnorm{\critradn/8}{p_0}}{ n \zeta } \right),
\end{align*}
which is indeed the case. Finally, we turn our attention to the term involving $U_4$. In this case 
we have that it suffices to show that,
\begin{align*}
n^{1/2} \sum_{i \in S_{j^*}} \Delta_i^2 \geq 16 \sqrt{ \frac{\sum_{i \in S_{j^*}} \Delta_i^3 }{\zeta} },
\end{align*}
by the monotonicity of the $\ell_p$ norm it suffices then to show that,
\begin{align*}
n^{1/2} \sum_{i \in S_{j^*}} \Delta_i^2 \geq 16 \sqrt{ \frac{ \left[ \sum_{i \in S_{j^*}} \Delta_i^2 \right]^{3/2} }{\zeta} },
\end{align*}
and after some simple algebra this yields that it suffices to have,
\begin{align*}
\sum_{i \in S_{j^*}} \Delta_i^2 \geq \frac{(16)^4}{\zeta^2 n^2},
\end{align*}
and this follows from an essentially identical argument to the one handling the term involving $U_2$. This completes the proof.

\section{Proofs for Examples of Lipschitz Testing}
\label{app:ex}
In this Section we provide proofs of the claims in Section~\ref{sec:ex}. For convenience, we restate
all the claims in the following lemma.
\begin{lemma}
\label{lem:ex}
\begin{itemize}
\item Suppose that $p_0$ is a standard one-dimensional Gaussian, with mean $\mu$, and variance $\nu^2$, then we have that:
\begin{align}
\label{eqn:exclaim1}
T_0(p_0) = (8 \pi)^{1/2} \nu.
\end{align}
\item Suppose that $p_0$ is a Beta distribution with parameters $\alpha, \beta$. Then we have,
\begin{align}
\label{eqn:exclaim2}
T_0(p_0) = \left(\int_0^1 \sqrt{p_0(x)} dx\right)^2 = \frac{B^2((\alpha + 1)/2, (\beta + 1)/2)}{ B(\alpha,\beta) },
\end{align}
where $B: \mathbb{R}^2 \mapsto \mathbb{R}$ is the Beta function.
Furthermore, if we take $\alpha = \beta = t \geq 1,$ then we have that,
\begin{align}
\label{eqn:exclaim3}
 \frac{\pi^2}{4e^4} t^{-1/2}  \leq T_0(p_0) \leq \frac{e^4}{4} t^{-1/2}.
\end{align}
\item Suppose that $p_0$ is Cauchy with parameter $\alpha$, then we have that,
\begin{align}
\label{eqn:exclaim4}
T_0(p_0) = \infty.
\end{align}
Furthermore, if $0 \leq \sigma \leq 0.5$ then,
\begin{align}
\label{eqn:exclaim5}
 \frac{4\alpha}{\pi}  \left[  \ln^2 \left( \frac{1}{\sigma}  \right)  \right] \leq T_{\sigma}(p_0) \leq  \frac{4\alpha}{\pi}  \left[  \ln^2 \left( \frac{2 e}{\pi \sigma}  \right)  \right].
\end{align}
\item Suppose that $p_0$ has a Pareto distribution with parameter $\alpha$ then we have that,
\begin{align}
\label{eqn:exclaim6}
T_0(p_0) = \infty,
\end{align}
while the truncated $T$-functional satisfies:
\begin{align}
\label{eqn:exclaim7}
 \frac{4 \alpha x_0}{(1 - \alpha)^2} \left( \sigma^{- \frac{1 - \alpha}{2\alpha}} - 1\right)^2 = T_\sigma(p_0) \leq  \frac{4 \alpha x_0}{(1 - \alpha)^2} \sigma^{- \frac{1 - \alpha}{\alpha}}.
\end{align}
\end{itemize}
\end{lemma}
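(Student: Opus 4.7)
\textbf{Proof proposal for Lemma~\ref{lem:ex}.} The plan is to treat the four distributions in turn, exploiting the fact that in each case the optimal set $B \in \mathcal{B}_\sigma$ that achieves the infimum in $T_\sigma(p_0)$ is a superlevel set $\{p_0 \geq c\}$ for an appropriate $c$. This is a standard consequence of a Lagrangian/rearrangement argument: among all sets $B$ with $P_0(B) \geq 1-\sigma$, the one minimizing $\int_B \sqrt{p_0}$ is obtained by keeping the points where $p_0(x)/\sqrt{p_0(x)} = \sqrt{p_0(x)}$ is largest, i.e., the top of $p_0$. Since every $p_0$ below is either symmetric unimodal or monotone on its support, the optimal $B$ is a single interval, reducing every computation to evaluating a one-dimensional integral.

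For the Gaussian~\eqref{eqn:exclaim1}, I would just take $B = \mathbb{R}$ (since $\sigma=0$) and integrate: $\int \sqrt{p_0} = (2\pi\nu^2)^{-1/4} \int e^{-(x-\mu)^2/(4\nu^2)}\,dx$. Recognising the inner integral as a Gaussian with variance $2\nu^2$ gives $(2\pi\nu^2)^{-1/4}\cdot 2\sqrt{\pi}\,\nu$; squaring yields $(8\pi)^{1/2}\nu$. The Beta identity~\eqref{eqn:exclaim2} is equally direct: $\int_0^1 x^{(\alpha-1)/2}(1-x)^{(\beta-1)/2}\,dx = B((\alpha+1)/2,(\beta+1)/2)$ by definition of the Beta function. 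For the asymptotic bounds~\eqref{eqn:exclaim3} with $\alpha=\beta=t$, I would apply the standard Stirling bounds
\[
\sqrt{2\pi}\,x^{x-1/2}e^{-x} \leq \Gamma(x) \leq e\cdot x^{x-1/2}e^{-x}
\]
to both $B(t,t)=\Gamma(t)^2/\Gamma(2t)$ and $B((t+1)/2,(t+1)/2)=\Gamma((t+1)/2)^2/\Gamma(t+1)$, then simplify the ratio. The dominant terms are $t^{-1/2}$ times a bounded constant; the explicit constants $\pi^2/(4e^4)$ and $e^4/4$ come from absorbing the Stirling slack.

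For the Cauchy distribution, I would first verify $T_0(p_0)=\infty$ by noting that $\int \sqrt{p_0(x)}\,dx = \sqrt{\alpha/\pi}\int \frac{dx}{\sqrt{x^2+\alpha^2}}$ diverges logarithmically at $\pm\infty$. For the truncated functional, symmetry and unimodality imply the optimal $B$ is $[-M,M]$ with $P_0([-M,M])=1-\sigma$, which gives $M=\alpha\cot(\pi\sigma/2)$. Then
\[
\int_{-M}^M \sqrt{p_0(x)}\,dx = 2\sqrt{\alpha/\pi}\,\ln\!\left(\frac{M+\sqrt{M^2+\alpha^2}}{\alpha}\right).
\]
Squaring gives $T_\sigma(p_0) = (4\alpha/\pi)\ln^2((M+\sqrt{M^2+\alpha^2})/\alpha)$. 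The upper bound in~\eqref{eqn:exclaim5} follows from $\sqrt{M^2+\alpha^2}\leq M+\alpha$ and the inequality $\cot(\pi\sigma/2)\leq 2/(\pi\sigma)$ valid for $\sigma\in(0,1/2]$, while the lower bound uses $M\geq \alpha/\sigma$ (which follows from $\tan(x)\geq x$ applied to $\pi(1-\sigma)/2$, or more simply from a crude bound on the tail probability of the Cauchy) together with $M+\sqrt{M^2+\alpha^2}\geq M$.

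The Pareto case is the cleanest: $T_0(p_0)=\infty$ by a direct check since $\int_{x_0}^\infty x^{-(\alpha+1)/2}\,dx$ diverges for $\alpha<1$. The optimal $B$ is $[x_0,M]$ (since $p_0$ is decreasing), and $P_0([x_0,M])=1-(x_0/M)^\alpha=1-\sigma$ gives $M=x_0\sigma^{-1/\alpha}$. Evaluating $\int_{x_0}^M \sqrt{p_0}$ is an elementary power-function integral yielding
\[
\int_{x_0}^M \sqrt{p_0(x)}\,dx = \frac{2\sqrt{\alpha x_0}}{1-\alpha}\left[\sigma^{-(1-\alpha)/(2\alpha)}-1\right],
\]
which squares to the lower bound in~\eqref{eqn:exclaim7}; the upper bound is immediate from $\sigma^{-(1-\alpha)/(2\alpha)}-1 \leq \sigma^{-(1-\alpha)/(2\alpha)}$.

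The main obstacle is the Beta asymptotic~\eqref{eqn:exclaim3}: the identity~\eqref{eqn:exclaim2} is trivial, but getting clean two-sided Stirling bounds with the precise constants $\pi^2/(4e^4)$ and $e^4/4$ requires bookkeeping of the error factors. Everything else reduces to verifying unimodality/monotonicity of $p_0$ (so the optimal $B$ is an interval), computing one integral, and squaring.
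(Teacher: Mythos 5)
Your overall route is the paper's route: recognise that the infimising set $B$ in $T_\sigma$ is a superlevel set (so an interval for unimodal or monotone densities), compute the elementary integral $\int_B \sqrt{p_0}$, and square. The Gaussian, Beta-identity, and Pareto computations match the paper's exactly. For the Beta asymptotic you propose applying Stirling directly to the possibly non-integer argument $(t+1)/2$, whereas the paper applies Stirling at integers and sandwiches $B\bigl(\tfrac{t+1}{2},\tfrac{t+1}{2}\bigr)$ between $B(t,t)$ and $B(t+1,t+1)$ using monotonicity; both are legitimate. For the Cauchy you use the exact antiderivative $\int_{-M}^M (x^2+\alpha^2)^{-1/2}\,dx = 2\ln\bigl(\tfrac{M+\sqrt{M^2+\alpha^2}}{\alpha}\bigr)$, which is a bit cleaner than the paper's approach of splitting the integral at $|x|=\alpha$.

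There is, however, a concrete gap in your Cauchy lower bound. You claim $M = \alpha\cot(\pi\sigma/2) \geq \alpha/\sigma$, justified by ``$\tan(x)\geq x$ applied to $\pi(1-\sigma)/2$.'' That inequality only gives $M \geq \alpha\,\pi(1-\sigma)/2$, which is far weaker; and $M\geq\alpha/\sigma$ is in fact false, e.g.\ at $\sigma=1/2$ it reads $\alpha\geq 2\alpha$. (The correct bound, which the paper uses, is the weaker $M\geq\alpha/(4\sigma)$ on $(0,1/2]$.) Since your argument needs $\ln\bigl(\tfrac{M+\sqrt{M^2+\alpha^2}}{\alpha}\bigr)\geq\ln(1/\sigma)$, i.e.\ $M+\sqrt{M^2+\alpha^2}\geq\alpha/\sigma$, you cannot obtain it by bounding $M$ alone. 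A clean fix is to use the half-angle identity $\cot\theta+\csc\theta=\cot(\theta/2)$, which gives $M+\sqrt{M^2+\alpha^2}=\alpha\cot(\pi\sigma/4)$, and then verify $\cot(\pi\sigma/4)\geq 1/\sigma$ on $(0,1/2]$ (equivalently $\tan(\pi\sigma/4)\leq\sigma$, which holds since $\tan$ is convex on $[0,\pi/8]$ with $\tan(\pi/8)=\sqrt{2}-1<1/2$). With that repair the Cauchy lower bound goes through; everything else in your sketch is sound.
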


\begin{proof}
Notice that Claims~\eqref{eqn:exclaim4} and~\eqref{eqn:exclaim6} follow by taking $\sigma \rightarrow 0$ in Claims~\eqref{eqn:exclaim5} and~\eqref{eqn:exclaim7} respectively.
We prove the remaining claims in turn.

\noindent {\bf Proof of Claim~\eqref{eqn:exclaim1}: } Observe that,
\begin{align*}
T_0(p_0) &= \frac{1}{\sqrt{2\pi} \nu} \left(\int_{-\infty}^{\infty} \exp( - (x - \mu)^2/(4\nu^2)) dx \right)^2 \\
&= \frac{1}{\sqrt{2\pi} \nu} 4\pi \nu^2 \\
&= \sqrt{8 \pi} \nu.
\end{align*}

\noindent {\bf Proof of Claim~\eqref{eqn:exclaim2}: } The Beta density can be written 
as:
\begin{align*}
p_0(x) = \frac{\Gamma(\alpha + \beta)}{\Gamma(\alpha) \Gamma(\beta)} x^{\alpha - 1} x^{\beta - 1} =
\frac{1}{B(\alpha,\beta)} x^{\alpha - 1} x^{\beta - 1},
\end{align*}
where $\Gamma: \mathbb{R} \mapsto \mathbb{R}$ denotes the Gamma function. Some simple algebra yields that the $T$-functional is simply:
\begin{align}
\label{eqn:betat}
T_0(p_0) = \int_0^1 \sqrt{p_0(x)} dx = \frac{B((\alpha + 1)/2, (\beta + 1)/2)}{ \sqrt{ B(\alpha,\beta) }}.
\end{align}

\noindent {\bf Proof of Claim~\eqref{eqn:exclaim3}: } We now take $\alpha = \beta = t \geq 1$ in the above expression. To prove the claim we use standard approximations to the Beta function derived using Stirling's formula. Recall, that by Stirling's formula we have that:
\begin{align*}
\sqrt{2 \pi n} \left( \frac{n}{e} \right)^n \leq n! \leq e \sqrt{n} \left( \frac{n}{e} \right)^n.
\end{align*}
We begin by upper bounding the Beta function for integers $\alpha, \beta \geq 0$:
\begin{align*}
B(\alpha, \beta) &= \frac{\Gamma(\alpha) \Gamma(\beta) }{\Gamma(\alpha + \beta)} = \frac{ (\alpha - 1)! (\beta - 1)! } { (\alpha + \beta - 1)! } = \frac{\alpha ! \beta ! }{ (\alpha + \beta)! } \frac{ \alpha + \beta}{\alpha \beta} \\
&\leq \frac{e^2}{\sqrt{2\pi}} \frac{ \alpha + \beta}{\alpha \beta}
\frac{ \sqrt{\alpha\beta } \alpha^{\alpha} \beta^{\beta} \exp (\alpha + \beta) }{\sqrt{\alpha + \beta} (\alpha + \beta)^{\alpha + \beta} \exp (\alpha + \beta) } \\
&= \frac{e^2}{\sqrt{2\pi}} \sqrt{ \frac{ \alpha + \beta}{\alpha \beta}} \frac{ \alpha^{\alpha} \beta^{\beta} }{(\alpha + \beta)^{\alpha + \beta}}.  
\end{align*}
Now, setting $\alpha = \beta = t \geq 1,$ we obtain:
\begin{align*}
B(t,t) \leq \frac{e^2}{\sqrt{\pi}} \frac{ 2^{-2t} }{ \sqrt{t}}.
\end{align*}
We can similarly lower bound the Beta function as:
\begin{align*}
B(t,t) \geq \frac{2\sqrt{2} \pi}{e} \frac{ 2^{-2t} }{ \sqrt{t}}.
\end{align*}
We also need to bound the Beta function at certain 
non-integer values. In particular,
we observe that,
\begin{align*}
B(t+1,t+1) \leq B(t + 1/2, t + 1/2) \leq B(t,t),
\end{align*}
so that we can similarly sandwich the Beta function at these non-integer values as:
\begin{align*}
\frac{2 \pi}{4e} \frac{ 2^{-2t} }{ \sqrt{t}}.\leq B(t + 1/2, t + 1/2) \leq \frac{e^2}{\sqrt{\pi}} \frac{ 2^{-2t} }{ \sqrt{t}}.
\end{align*}
With these bounds in place we can now upper and lower bound the $T$-functional in~\eqref{eqn:betat}. We can upper bound this expression by considering the cases when $t$ is odd and $t$ is even separately, and taking the worse of these two bounds to obtain:
\begin{align*}
T(p_0) \leq \frac{e^2}{2} t^{-1/4}.
\end{align*}
Similarly, using the above results we can lower bound the $T$-functional as:
\begin{align*}
T(p_0) \geq \frac{\pi}{2e^2} t^{-1/4},
\end{align*}
and this yields the claim.


\noindent {\bf Proof of Claim~\eqref{eqn:exclaim5}: } 
We are interested in the truncated $T$-functional. 
The set $B_\sigma$ of probability content $1 - \sigma$, takes the form $[-\alpha,\alpha]$, where
\begin{align*}
\alpha =  \gamma \tan \left( \frac{\pi}{2} (1 - \sigma) \right) = \gamma \cot  \left( \frac{\pi \sigma}{2} \right).
\end{align*}
Using the inequality that $\cot(x) \leq \frac{1}{x},$
we can upper bound $\alpha$ as:
\begin{align*}
\alpha \leq \frac{2 \gamma}{\pi \sigma}.
\end{align*}
Similarly, we can (numerically) lower bound $\alpha$ by noting that for $0 \leq \sigma \leq 0.5$ we have that,
\begin{align*}
\alpha \geq \frac{ \gamma }{4 \sigma}.
\end{align*}
With these bounds in place, we can now proceed to upper and lower bound the truncated $T$ functional. Concretely,
\begin{align*}
T_\sigma(p_0) &\leq   \frac{\gamma}{\sqrt{\pi \gamma}} \int_{-\frac{2 \gamma}{\pi \sigma}}^{ \frac{2 \gamma}{\pi \sigma}}  \frac{1}{\sqrt{x^2 + \gamma^2}} dx \leq  
 \frac{2 \gamma}{\sqrt{\pi \gamma}} \left[ \int_0^\gamma \frac{1}{\gamma} dx + \int_{\gamma}^{\frac{2 \gamma}{\pi \sigma}}  \frac{1}{x} dx \right] \\
&\leq \frac{2 \gamma}{\sqrt{\pi \gamma}} \left[ 1 + \ln \left( \frac{2}{\pi \sigma}  \right)  \right] \\
&= 2 \sqrt{\frac{\gamma}{\pi}} \left[  \ln \left( \frac{2 e}{\pi \sigma}  \right)  \right].
\end{align*}
In a similar fashion, we can lower bound the functional as:
\begin{align*}
T_\sigma(p_0) \geq 2 \sqrt{\frac{\gamma}{\pi}}  \left[  \ln \left( \frac{1}{\sigma}  \right)  \right].
\end{align*}
Taken together these bounds give the desired claim.

\noindent {\bf Proof of Claim~\eqref{eqn:exclaim7}: } We treat $x_0$ as a fixed constant. 
The CDF for 
the Pareto family of distributions takes the simple form:
\begin{align*}
F(x) = 1 - \left(\frac{x_0}{x} \right)^{\alpha},~~~\text{for}~~x \geq x_0,
\end{align*}
we obtain that the set $B_\sigma$ takes the form $[x_0, x_0 \sigma^{-1/\alpha}]$.
So that the truncated functional is simply:
\begin{align*}
T_{\sigma}(p_0) &= \int_{x_0}^{x_0  \sigma^{-1/\alpha}} \sqrt{p_0(x; x_0,\alpha)} dx \\
&= \frac{2 \sqrt{\alpha x_0}}{1 - \alpha} \left( \sigma^{- \frac{1 - \alpha}{2\alpha}} - 1\right),
\end{align*}
which yields the desired claim.
\end{proof}

\section{Properties of the $T$-functional}
\label{app:tfunc}
The rate for 
Lipschitz testing is largely dependent on the truncated
$T$-functional 
of the null hypothesis. 
In this section we establish several properties of the $T$-functional, 
and its stability with respect to perturbations. 
There are two notions of stability of the truncated $T$-functional that are of interest:
its stability with respect to perturbation of the truncation parameter, and its
stability with respect to perturbations of the density $p_0$. 
In particular, the truncation stability determines the discrepancy between the upper and lower bounds in Theorem~\ref{thm:main}.

Our interest is in the difference between $T_{\sigma_1}(p_0)$ and $T_{\sigma_2}(p_0)$ (where without loss of generality we take $\sigma_1 \leq \sigma_2$). We show that if the support of the density is stable with respect to the truncation parameter then so is the $T$-functional. Intuitively, the discrepancy can be large only if the density has a long $\sigma_1$-tail but a relatively small $\sigma_2$-tail. Returning to the definition of the $T$-functional in~\eqref{eqn:tfuncdef}, we let $B_{\sigma_1}$ and $B_{\sigma_2}$ denote the sets that achieve the infimum for $T_{\sigma_1}$ and 
$T_{\sigma_2}$ respectively. These are not typically well-defined 
for two reasons: the set may not be unique and 
the infimum might not be attained. The second problem 
can be easily dealt with by introducing a small amount of
slack. To deal with the non-uniqueness we simply choose the sets that have maximal overlap in Lebesgue measure, i.e.
we define $B_{\sigma_1}$ and $B_{\sigma_2}$ to be two sets that have maximal Lebesgue overlap such that,
\begin{align*}
\Big( \int_{B_{\sigma_1}} \sqrt{p_0(x)} dx \Big)^{2}  &\geq T_{\sigma_1}(p_0) - \xi,\\
\Big( \int_{B_{\sigma_2}} \sqrt{p_0(x)} dx \Big)^{2}  &\geq T_{\sigma_2}(p_0) - \xi,
\end{align*}
for an arbitrary small $\xi > 0$.
The quantity $\xi$ may be taken as small as we like and has no effect when chosen small enough so we ignore it in what follows. We define, $S = B_{\sigma_1} \backslash B_{\sigma_2}$ which measures
the stability of the support with respect to changes in the truncation parameter, i.e. if the Lebesgue
measure $\mu(S)$ is small then the support is stable.
With these definitions in place we have the following lemma:
\begin{lemma}
For any two truncation levels $\sigma_1 \leq \sigma_2$, we have that,
\begin{align*}
T_{\sigma_1}^\gamma(p_0) - T_{\sigma_2}^\gamma(p_0) \leq (\sigma_1 - \sigma_2)^{\gamma} \mu(S)^{1-\gamma}.
\end{align*}
\end{lemma}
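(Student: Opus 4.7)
The plan is to read the right-hand side as $(\sigma_2-\sigma_1)^{\gamma}\,\mu(S)^{1-\gamma}$, since $\sigma_1\le\sigma_2$ makes $(\sigma_1-\sigma_2)^{\gamma}$ ill-defined (indeed, $T_{\sigma_1}^\gamma\ge T_{\sigma_2}^\gamma$ because shrinking the allowed $\sigma$ shrinks the feasible family for the infimum). The idea is to build an explicit competitor for the infimum defining $T_{\sigma_1}$ by taking the near-minimizer $B_{\sigma_2}$ and enlarging it by just enough of $S = B_{\sigma_1}\setminus B_{\sigma_2}$ to satisfy the $(1-\sigma_1)$-mass constraint. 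Once the difference is written as an integral over a small subset of $S$, a single application of H\"older's inequality gives the bound.

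Concretely, I will use the near-optimality conditions preceding the lemma to write $T_{\sigma_i}^{\gamma}(p_0) = \int_{B_{\sigma_i}}p_0^{\gamma}+O(\xi)$ with $P_0(B_{\sigma_i})\ge 1-\sigma_i$, for $i=1,2$. Because $p_0$ is a (Lipschitz, hence continuous) density, $P_0$ is non-atomic, so I can select a measurable $S'\subseteq S$ with $P_0(S') = \min\{P_0(S),\sigma_2-\sigma_1\}$. I then set $B':=B_{\sigma_2}\cup S'$ and verify feasibility for the $T_{\sigma_1}$ infimum by cases. If $P_0(S)\ge \sigma_2-\sigma_1$ then $P_0(B')=P_0(B_{\sigma_2})+P_0(S')\ge (1-\sigma_2)+(\sigma_2-\sigma_1)=1-\sigma_1$; otherwise $S'=S$ and $B'=B_{\sigma_1}\cup B_{\sigma_2}\supseteq B_{\sigma_1}$, so again $P_0(B')\ge 1-\sigma_1$. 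Since $B_{\sigma_2}$ and $S'\subseteq S$ are disjoint,
$$T_{\sigma_1}^{\gamma}(p_0)\;\le\;\int_{B'}p_0^{\gamma}\;=\;\int_{B_{\sigma_2}}p_0^{\gamma}+\int_{S'}p_0^{\gamma}\;=\;T_{\sigma_2}^{\gamma}(p_0)+\int_{S'}p_0^{\gamma}+O(\xi).$$

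The final step is H\"older's inequality with exponents $1/\gamma$ and $1/(1-\gamma)$ applied to the integrand $p_0^{\gamma}\cdot 1$ on $S'$:
$$\int_{S'}p_0^{\gamma}\,dx\;\le\;\Bigl(\int_{S'}p_0\,dx\Bigr)^{\gamma}\mu(S')^{1-\gamma}\;=\;P_0(S')^{\gamma}\mu(S')^{1-\gamma}\;\le\;(\sigma_2-\sigma_1)^{\gamma}\mu(S)^{1-\gamma},$$
where the last inequality uses $P_0(S')\le\sigma_2-\sigma_1$, $S'\subseteq S$, and $1-\gamma>0$. Rearranging and letting $\xi\downarrow 0$ yields the stated inequality.

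I do not expect a serious obstacle. The only two subtleties are (i) handling the case $P_0(S)<\sigma_2-\sigma_1$ without forcing the $P_0$-mass of the enlarging set to exceed $\sigma_2-\sigma_1$, which is exactly what the $\min$ in the definition of $P_0(S')$ accomplishes; and (ii) noting that the maximum Lebesgue overlap tiebreaker in the choice of $B_{\sigma_1}, B_{\sigma_2}$ is not actually required for the inequality as stated — it only pins down the value of $\mu(S)$ and so controls how informative the right-hand side is in the stability interpretation discussed in the surrounding text.
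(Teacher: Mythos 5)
Your proof is correct, and it is \emph{more} careful than the paper's own argument. The paper's proof opens by writing $T_{\sigma_1}^\gamma(p_0) - T_{\sigma_2}^\gamma(p_0) = \int_{S} p_0^{\gamma}(x)\, dx$ and closes by identifying $\int_S p_0(x)\,dx$ with $\sigma_2-\sigma_1$ (modulo the sign typo you correctly flagged). Both steps rely on implicit structural assumptions about the near-minimizers: that $B_{\sigma_2} \subseteq B_{\sigma_1}$ up to null sets (so that the difference of the two integrals localizes to $S$ rather than picking up a term on $B_{\sigma_2}\setminus B_{\sigma_1}$), and that $P_0(B_{\sigma_i})$ equals the threshold $1-\sigma_i$ exactly (so that $P_0(S)=\sigma_2-\sigma_1$ rather than merely being some number in $[0, P_0(B_{\sigma_1})]$). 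Neither of these follows from the definitions given in the surrounding text, so the paper's derivation is really a heuristic sketch. Your construction of the explicit competitor $B' = B_{\sigma_2}\cup S'$ with $P_0(S')=\min\{P_0(S),\sigma_2-\sigma_1\}$ sidesteps both issues: feasibility of $B'$ is verified by the two-case argument, and the infimum definition of $T_{\sigma_1}$ then gives the inequality directly. The central idea --- applying H\"older with exponents $1/\gamma$ and $1/(1-\gamma)$ to $\int p_0^\gamma$ over a small set --- is the same in both proofs, so the difference is purely in the rigor of the setup; what your version buys is that it does not depend on unstated regularity of the (possibly non-unique, possibly non-attained) minimizing sets. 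Two small bookkeeping remarks: (i) the non-atomicity you invoke to extract $S'$ is genuinely needed and correctly justified by continuity of the Lipschitz density; (ii) the final bound you obtain has an extra $O(\xi)$ slack from the near-minimality of $B_{\sigma_2}$, which you correctly dispose of by sending $\xi\downarrow 0$ --- the same informal handling the paper applies silently.
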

\noindent {\bf Remarks: } 
\begin{itemize}
\item Since $\gamma < 1$, this result asserts that if the support of the density is stable with respect
to the truncation parameter then so is the truncated $T$-functional. This is the case in all the examples we considered in Section~\ref{sec:ex}.
\item If we restrict attention to compactly supported densities then we can upper bound $\mu(S)$ by 
the Lebesgue measure of the support indicating that in these cases the truncated $T$-functional is somewhat stable.
\item On the other hand this result also gives insight into when the truncated functional is not stable. In particular, it is straightforward to construct examples of densities $p_0$ which have a very long $\sigma_1$-tail but a light $\sigma_2$-tail, in which case this discrepancy can be arbitrarily large. 
Noting however that in our bounds the regime of interest is when the truncation 
parameter is not fixed, i.e. when $\sigma \rightarrow 0$, in which case this discrepancy can be large only for carefully constructed pathological densities. 
\end{itemize}
%
%
%
%

%
%
%
\begin{proof}
The result follows using H\"{o}lder's inequality:
\begin{align*}
T_{\sigma_1}^\gamma(p_0) - T_{\sigma_2}^\gamma(p_0) &= 
\int_{S} p_0^{\gamma}(x) dx \\
&= \mu(S) \int_{S} \frac{p_0^{\gamma}(x)}{\mu(S)} dx \\
&\leq \mu(S) \left(\int_{S} \frac{p_0(x)}{\mu(S)}dx \right)^{\gamma} \\
&= \mu(S)^{1 - \gamma} (\sigma_1 - \sigma_2)^{\gamma}.
\end{align*}
\end{proof}

In order to understand the stability of the $T$-functional with respect to perturbations of $p_0$ 
it is natural to consider a form of the modulus of continuity. We restrict our attention to densities
$p_0$ which have support contained in a fixed set $S$, and denote these densities by $\mathcal{L}(\lipcons,S)$, and only consider the case when $d = 1$ and hence $\gamma = 1/2.$

Focussing on the case when the truncation parameter is fixed (say to $0$) we define:
\begin{align*}
s(p_0, \tau,S) = \sup_{p, p_0 \in \mathcal{L}(\lipcons,S), \|p - p_0\|_1 \leq \tau} |T_0^{\gamma}(p) - T_0^{\gamma}(p_0)|.
\end{align*}
With these definitions in place, we have the following result:
\begin{lemma}
For any $p_0$, the modulus of continuity of the $T$-functional is upper bounded as:
\begin{align*}
s(p_0,\tau,S) \leq \sqrt{\tau \mu(S)}.
\end{align*}
\end{lemma}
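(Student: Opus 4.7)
The plan is to bound the desired quantity by a pointwise inequality on square roots, followed by Cauchy--Schwarz on the set $S$. Since we are in the regime $\gamma = 1/2$ and the truncation level is $0$, we have $T_0^{\gamma}(q) = \int_S \sqrt{q(x)}\,dx$ for any density $q$ supported in $S$, so the object to control is $\bigl|\int_S \sqrt{p(x)}\,dx - \int_S \sqrt{p_0(x)}\,dx\bigr|$ uniformly over pairs $p,p_0 \in \mathcal{L}(L,S)$ with $\|p - p_0\|_1 \leq \tau$.

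First, I would pass the absolute value inside the integral and apply the elementary inequality
\begin{equation*}
\bigl|\sqrt{a} - \sqrt{b}\bigr| \;\leq\; \sqrt{|a-b|}, \qquad a,b \geq 0,
\end{equation*}
which follows from expanding $(\sqrt{a}-\sqrt{b})^2 = |a-b| - 2\sqrt{\min(a,b)}\,|\sqrt{a}-\sqrt{b}| \leq |a-b|$. Applied pointwise to $a = p(x)$, $b = p_0(x)$, this yields
\begin{equation*}
\Bigl|\int_S \sqrt{p} - \int_S \sqrt{p_0}\Bigr| \;\leq\; \int_S \sqrt{|p(x) - p_0(x)|}\,dx.
\end{equation*}

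Second, I would apply the Cauchy--Schwarz inequality to the pair of functions $\sqrt{|p-p_0|}$ and $\mathbb{1}_S$, which gives
\begin{equation*}
\int_S \sqrt{|p(x)-p_0(x)|}\,dx \;\leq\; \Bigl(\int_S |p(x) - p_0(x)|\,dx\Bigr)^{1/2} \mu(S)^{1/2} \;\leq\; \sqrt{\tau\,\mu(S)},
\end{equation*}
using the hypothesis $\|p - p_0\|_1 \leq \tau$. Taking the supremum over all admissible pairs produces the claimed bound $s(p_0, \tau, S) \leq \sqrt{\tau\,\mu(S)}$.

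The argument is short and requires no serious obstacle: the only subtlety is verifying the pointwise root inequality, which is elementary. Note that the Lipschitz structure of $\mathcal{L}(L,S)$ is not actually needed for this modulus-of-continuity bound; the proof uses only non-negativity of the densities, the $\ell_1$ constraint, and finiteness of $\mu(S)$. The Lipschitz hypothesis would only matter if one wanted a sharper bound exploiting that the densities cannot oscillate too wildly on $S$.
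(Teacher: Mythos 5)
Your proof is correct and follows essentially the same route as the paper's: both arguments reduce the claim to Cauchy--Schwarz on $S$ combined with the inequality $(\sqrt{a}-\sqrt{b})^2\leq|a-b|$, the only difference being that you apply that inequality pointwise before integrating, while the paper applies Jensen first and then invokes the integrated form (Hellinger bounded by $\ell_1$) as a known fact. Your closing remark that the Lipschitz hypothesis plays no role here is also consistent with the paper's proof, which likewise uses only non-negativity and the $\ell_1$ constraint.
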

\noindent {\bf Remark: } 
\begin{itemize}
\item This result guarantees that for densities that are close in $\ell_1$, their corresponding $T$-functionals are close, provided that we restrict attention to compactly supported densities. 

\item On the other hand, an inspection of the proof below reveals that if we eliminate the restriction
of compact support, then for any density $p_0$, we can construct a density $p$ that is close in $\ell_1$ but has an arbitrarily large discrepancy in the $T$-functional, i.e. the $T$-functional can be highly unstable to perturbations of $p_0$ if we allow densities with arbitrary support.

\end{itemize}

\begin{proof}
Notice that,
\begin{align*}
T_0^{\gamma}(p) - T_0^{\gamma}(p_0) &= \int_S (\sqrt{p(x)} - \sqrt{p_0(x)}) dx \\
&= \mu(S) \int_S (\sqrt{p(x)} - \sqrt{p_0(x)}) \frac{1}{\mu(S)} dx \\
&\leq \mu(S) \sqrt{ \int_S  (\sqrt{p(x)} - \sqrt{p_0(x)})^2 \frac{1}{\mu(S)} dx } \\
&\stackrel{\text{(i)}}{\leq} \sqrt{\mu(S) \|p - p_0\|_1} \leq \sqrt{ \tau \mu(S) },
\end{align*}
where (i) uses the fact that the Hellinger distance is upper bounded by the $\ell_1$ distance.
\end{proof}

\subsection{Proof of Claim~\eqref{eqn:claim}}
This claim is a straightforward consequence of H\"older's inequality. We have that,
\begin{align*}
T_{\sigma}(p_0) =  \inf_{B \in {\cal B}_\sigma}  \left(
\int_{B} p_0^\gamma(x) dx\right)^{1/\gamma}. 
\end{align*}
We restrict our attention to densities with support contained in a fixed set $S$. We let $B_{\sigma}$ denote an arbitrary set in $\mathcal{B}_{\sigma}$ that minimizes the above integral (dealing with non-uniqueness as before). Then,
\begin{align*}
T^{\gamma}_{\sigma}(p_0) &=  \mu(B_{\sigma}) \int_{B_{\sigma}} \frac{p_0^{\gamma}(x)}{\mu(B_{\sigma})} dx \\
&\stackrel{\text{(i)}}{\leq} \mu(B_{\sigma}) \left( \int_{B_{\sigma}} \frac{p_0(x)}{\mu(B_{\sigma})} dx \right)^{\gamma} \\
&= \mu(B_{\sigma})^{1 - \gamma} (1 - \sigma)^{\gamma} \leq \mu(S)^{1 - \gamma} (1 - \sigma)^{\gamma}
\end{align*}
where (i) uses H\"older's inequality. This yields the claim. 
For the uniform distribution $u$ on the set $S$ we have that for any set $B_{\sigma}$ of mass $1 - \sigma$,
\begin{align*}
T_{\sigma}^{\gamma}(u) &= \int_{B_{\sigma}}\frac{1}{\mu^{\gamma}(S)} dx  \\
&= \mu(S)^{1 - \gamma} (1 - \sigma),
\end{align*}
which matches the  result of~\eqref{eqn:claim} up to constant factors involving $\sigma$ and $\gamma$. In particular, our interest is in the regime when $\sigma \rightarrow 0$, and $\gamma$ is a constant, in which case the two quantities are equal.

\section{Technical Results for Lipschitz Testing}
\label{app:lip}
In this section we provide the remaining technical proofs related the Theorem~\ref{thm:main}. We begin with the preliminary Lemmas~\ref{lemma:ingster} and~\ref{lem:rho}.

\subsection{Preliminaries}

\subsubsection{Proof of Lemma~\ref{lemma:ingster}}
Let ${\cal A}$ be all sets $A$ such that $P_0^n(A) \leq \alpha$.
Now
\begin{align*}
\zeta_n({\cal P}) & \geq \inf_\phi Q(\phi=0) \geq 1-\alpha - \sup_{A\in {\cal A}}|Q(A)-P_0^n(A)|\\
& \geq 1-\alpha - \sup_{A}|Q(A)-P_0^n(A)|\\
&= 1-\alpha - \frac{1}{2} \|Q- P_0^n\|_1.
\end{align*}
Note that
\begin{align*}
\|Q- P_0^n\|_1 &=
\mathbb{E}_0 | \likrat(Z_1,\ldots,Z_n) - 1| \leq \sqrt{\mathbb{E}_0 [\likrat^2(Z_1,\ldots,Z_n)] - 1}.
\end{align*}
The result then follows from (\ref{eq:L}).

\subsubsection{Proof of Lemma~\ref{lem:rho}}
We divide the proof into several claims.

\noindent {\bf Claim 1: Each $p_\eta$ is a density function.}
Note that \begin{align*}
\int p_\eta(x) dx = 1.
\end{align*}
Now we show it is non-negative.
Let $x\in A_j$.
Then
\begin{align*}
p_\eta(x) &= p_0(x) + \rho_j  \eta_j \psi_j(x) \geq 
p_0(x) -\rho_j  \psi_j(x)\\
&\geq p_0(x) -  \frac{\rho_j}{c_j^{d/2}h_j^{d/2}} \|\psi\|_{\infty}.
\end{align*}
Now, we observe that for each piece of our partition we have that,
\begin{align*}
p_0(x) \geq \frac{p_0(x_j)}{2} \geq \frac{\lipcons \alpha \beta \sqrt{\dimension} h_j}{2} \geq \lipcons\sqrt{\dimension} h_j, 
\end{align*}
where we use the fact that $\alpha \beta \geq 2$. We then obtain that it suffices to choose,
\begin{align*}
\rho_j \leq  \frac{\lipcons c_j^{d/2}}{\coninfty}  h_j^{d/2 + 1},
\end{align*}
which is ensured by the condition in Equation~\eqref{eqn:condone}.

\vspace{1cm}

\noindent {\bf Claim 2:
Each $p_\eta \in \mathcal{L}(\lipcons)$.}
Let $x,y$ be two points, and that $x \in A_j, y \in A_k$. We consider two
cases: when neither of $j,k$ are $\infty$, and when at least one of them is. Noting that we do not perturb $A_{\infty}$ the second case follows from a similar argument to that of the first case. 
In the first case, we have that:
\begin{align*}
|p_\eta(y) - p_\eta(x)| &\leq |p_0(x) - p_0(y)| + \left\vert \frac{\rho_k \eta_k}{c_k^{d/2} h_k^{d/2}} \psi \left( \frac{y - x_k}{c_k h_k} \right) -   \frac{\rho_j \eta_j}{c_j^{d/2} h_j^{d/2}} \psi \left( \frac{x - x_j}{c_j h_j} \right)\right\vert \\
& \leq \intconst \lipcons \|x - y\| + \left\vert \frac{\rho_k \eta_k}{c_k^{d/2} h_k^{d/2}} \psi \left( \frac{y - x_k}{c_k h_k} \right) -   \frac{\rho_k \eta_k}{c_k^{d/2} h_k^{d/2}} \psi \left( \frac{x - x_k}{c_k h_k} \right)\right\vert \\
& + \left\vert \frac{\rho_j \eta_j}{c_j^{d/2} h_j^{d/2}} \psi \left( \frac{y - x_j}{c_j h_j} \right) -   \frac{\rho_j \eta_j}{c_j^{d/2} h_j^{d/2}} \psi \left( \frac{x - x_j}{c_j h_j} \right)\right\vert \\
&\leq \intconst \lipcons \|x - y\| + \frac{\rho_k \|\psi^{\prime}\|_{\infty} \|x - y\|}{c_k^{d/2 + 1} h_k^{d/2 + 1}} +
\frac{\rho_j \|\psi^{\prime}\|_{\infty} \|x - y\|}{c_j^{d/2 + 1} h_j^{d/2 + 1}},
\end{align*}
so that it suffices to ensure that for $i \in \{1,\ldots,\smallN\}$,
\begin{align*}
\rho_i \leq \frac{(1 - \intconst) \lipcons c_i^{d/2 + 1} h_i^{d/2 + 1}}{ 2 \|\psi^{\prime}\|_{\infty}},
\end{align*}
which is ensured by the condition in~\eqref{eqn:condone}.

\vspace{1cm}

\noindent {\bf Claim 3.
$\int |p_0 - p_\eta| \geq \epsilon$.}
We have
\begin{align*}
\int | p_0 - p_\eta| &= \sum_j \int_{A_j} |p_0 - p_\eta| =
\sum_j \int_{A_j} |\rho_j \eta_j \psi_j|\\
&=
\sum_j \rho_j \int_{A_j} |\psi_j| =
\sum_j \rho_j \int_{A_j} \frac{1}{c_j^{d/2} h_j^{d/2}} \left\vert\psi \left(\frac{x-x_j}{c_j h_j}\right)\right\vert\\
&=
\sum_j \rho_j c_j^{d/2} h_j^{d/2}\int_{[-1/2,1/2]^d} |\psi|=
c_1 \sum_j \rho_j c_j^{d/2} h_j^{d/2} \geq \epsilon,
\end{align*}
where we use the condition in~\eqref{eqn:condtwo}.
Taken together claims 1, 2 and 3 show that $p_\eta\in \mathcal{L}(\lipcons)$ and that
$\|p_\eta- p_0\|_1 \geq \critradn$.

\vspace{1cm}

\noindent {\bf Claim 4: Likelihood ratio bound.}
For observations $\{Z_1,\ldots,Z_n\}$ the likelihood ratio is given as
$$
\likrat(Z_1,\ldots,Z_n) = \frac{1}{2^N}\sum_{\eta \in \{-1,1\}^N} \prod_i \frac{p_\eta(Z_i)}{p_0(Z_i)}
$$
and
\begin{align*}
\likrat^2(Z_1,\ldots,Z_n) &= \frac{1}{2^{2\smallN}}\sum_{\eta \in \{-1,1\}^N} \sum_{\nu \in \{-1,1\}^N}   \prod_i \frac{p_\eta(Z_i)p_\nu(Z_i)}{p_0(Z_i)p_0(Z_i)}\\
&= \frac{1}{2^{2\smallN}}\sum_{\eta \in \{-1,1\}^N} \sum_{\nu \in \{-1,1\}^N}    \prod_i 
\left(1 + \frac{\sum_{j=1}^{\smallN} \rho_j \eta_j \psi_j(Z_i)}{p_0(Z_i)}\right)
\left(1 + \frac{\sum_{j=1}^{\smallN} \rho_j \nu_j \psi_j(Z_i)}{p_0(Z_i)}\right).
\end{align*}
Taking the expected value over $Z_1,\ldots, Z_n$, and using the fact that the $\psi_j$s have disjoint support we obtain
\begin{align*}
E_0[\likrat^2(Z_1,\ldots,Z_n)] &=
 \frac{1}{2^{2\smallN}}\sum_{\eta \in \{-1,1\}^N} \sum_{\nu \in \{-1,1\}^N} \left(1+  \sum_{j=1}^{\smallN} \rho_j^2 \eta_j \nu_j a_j \right)^n \\
 &\leq
\frac{1}{2^{2\smallN}}\sum_{\eta \in \{-1,1\}^N} \sum_{\nu \in \{-1,1\}^N}
\exp\left( n  \sum_j \rho_j^2 \eta_j \nu_j a_j\right)
\end{align*}
where
\begin{align*}
a_j &= \int_{A_j} \frac{\psi_j^2(z)}{p_0(z)} dz =
\frac{1}{p_0(z_j)}\int_{A_j} \psi_j^2(z) \frac{p_0(z_j)}{p_0(z)} dz \\
& \leq \frac{2}{p_0(z_j)}.
\end{align*}
Thus
$E_0[\likrat^2(Z_1,\ldots,Z_n)] \leq E_{\eta,\nu} e^{n \langle \eta,\nu\rangle}$
where we use the weighted inner product defined as:
\begin{align*}
\langle \eta,\nu\rangle := \sum_j \rho_j^2 \eta_j \nu_j a_j.
\end{align*}
Hence,
\begin{align*}
E_0[\likrat^2(Z_1,\ldots,Z_n)] & \leq E_{\eta,\nu} e^{n \langle \eta,\nu\rangle} = 
\prod_j E e^{n \eta_j \nu_j}\\
&=
\prod_j {\rm cosh}(n \rho_j^2 a_j) \leq
\prod_j ( 1+ n^2 \rho_j^4 a_j^2) \leq
\prod_j \exp (n^2 \rho_j^4 a_j^2)\\
&= \exp\left\{ \sum_j n^2 \rho_j^4 a_j^2 \right\} \leq
\exp\left\{ 4 n^2 \sum_j \frac{\rho_j^4}{p_0^2(x_j)}\right\} \leq C_0,
\end{align*}
where the final inequality uses the condition in~\eqref{eqn:condthree}.
From Lemma \ref{lemma:ingster},
it follows that the Type II error of any test is at least $\delta$.

\subsection{Further technical preliminaries}
Our analysis of the pruning in Algorithm~\ref{alg:two} uses various results that we provide in this section.
\begin{lemma}
\label{lemma::smallest}
Let $P$ be a distribution with density $p$
and let $\gamma\in [0,1)$.
Let
$$
A = \{x:\ p(x) \geq t\}
$$
for some $t$.
Define $\theta = P(A)$.
Finally, let
${\cal B} = \{B:\ P(B) \geq \theta\}$.
Then, for every $B\in {\cal B}$,
$$
\int_A p^\gamma(x) dx \leq \int_B p^\gamma(x) dx.
$$
\end{lemma}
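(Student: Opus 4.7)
My plan is to prove this by a classical bathtub-principle style rearrangement. The intuition is that among all sets of a fixed $P$-mass, super-level sets of $p$ minimize $\int p^\gamma$ because $\gamma < 1$ makes $u \mapsto u^\gamma$ concave, so it is cheapest to pack the required mass into the region where $p$ is largest.

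Concretely, I would first decompose $A = (A \cap B) \sqcup (A \setminus B)$ and $B = (A \cap B) \sqcup (B \setminus A)$ and cancel the common integral over $A \cap B$, reducing the desired inequality to
\[
\int_{A \setminus B} p^\gamma(x)\, dx \;\leq\; \int_{B \setminus A} p^\gamma(x)\, dx.
\]
Subtracting $P(A \cap B)$ from both sides of the hypothesis $P(B) \geq \theta = P(A)$ gives the mass comparison $P(B \setminus A) \geq P(A \setminus B)$, which will be the combinatorial input.

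The analytic step uses that $\gamma - 1 < 0$, so $u \mapsto u^{\gamma - 1}$ is decreasing on $(0,\infty)$. On $A \setminus B$ we have $p(x) \geq t$, so $p(x)^\gamma = p(x)\cdot p(x)^{\gamma - 1} \leq t^{\gamma - 1}\, p(x)$; on $B \setminus A$ we have $p(x) < t$, which flips the bound to $p(x)^\gamma \geq t^{\gamma - 1}\, p(x)$, with the convention $0^\gamma := 0$ handling points where $p$ vanishes. Integrating and chaining these with the mass comparison:
\[
\int_{A \setminus B} p^\gamma\, dx \;\leq\; t^{\gamma - 1}\, P(A \setminus B) \;\leq\; t^{\gamma - 1}\, P(B \setminus A) \;\leq\; \int_{B \setminus A} p^\gamma\, dx,
\]
which closes the argument.

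I do not anticipate any serious obstacle. The only fussy points are the degenerate corners: if $t = 0$ then $A$ coincides with the support of $p$ up to a $P$-null set and the conclusion is immediate, while if $\gamma = 0$ one reads $p^\gamma$ as $\mathbf{1}[p > 0]$ and the same chain of inequalities still goes through (the factor $t^{\gamma-1} = t^{-1}$ is positive when $t > 0$, which is the only case where the reduction is nontrivial). Measurability and finiteness of the integrals are standard under the tacit assumption that $p$ is a measurable density.
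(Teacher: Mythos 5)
Your proof is correct and follows essentially the same route as the paper's: decompose on the symmetric difference $A\setminus B$ and $B\setminus A$, use that $P(B\setminus A)\ge P(A\setminus B)$, and exploit the monotonicity of $u\mapsto u^{\gamma-1}$ across the separating threshold. The only cosmetic difference is that you pin the separating constant to $t^{\gamma-1}$ directly, whereas the paper phrases the same step via $\sup_{A\setminus B} p^{\gamma-1}\le \inf_{B\setminus A} p^{\gamma-1}$.
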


\begin{proof}
Let
$$
S_1 = A \bigcap B^c,\ \ \ S_2 = A^c \bigcap B.
$$
Then
$$
\int_A p^\gamma(x) dx - \int_B p^\gamma(x) dx = \int_{S_1} p^\gamma(x) dx - \int_{S_2} p^\gamma(x) dx.
$$
So it suffices to show that
$\int_{S_1} p^\gamma(x) dx \leq \int_{S_2} p^\gamma(x) dx.$
Note that:
\begin{enumerate}
\item $S_1$ and $S_2$ are disjoint,
\item $\inf_{y\in S_1}p(y)   \geq \sup_{y\in S_2}p(y)$ and
\item $\int_{S_1} p(x) dx \leq \int_{S_2} p(x) dx$.
\end{enumerate}
where the last fact follows since $\int_A p(x) dx \leq \int_{B} p(x) dx$.
Thus,
letting $g(x) = 1/p^{1-\gamma}(x)$, we have that
$$
g(x) \leq g(y)
$$
for all $x\in S_1$ and $y\in S_2$.
So
\begin{align*}
\int_{S_1} p^\gamma(x) dx &= \int_{S_1} p(x) g(x) dx \leq \sup_{x\in S_1}g(x) \int_{x\in S_1} p(x) dx \\
& \leq \sup_{x\in S_1}g(x) \int_{x\in S_2} p(x) dx \leq \inf_{x\in S_2}g(x) \int_{x\in S_2} p(x) dx\\
& \leq \int_{S_2} p(x) g(x) dx = \int_{S_2} p^\gamma(x) dx.
\end{align*}
\end{proof}

\noindent The following lemma concerns the optimal truncation of a piecewise constant function. Suppose we have a piecewise constant positive function $f$, which is constant on the partition
$\{A_1,\ldots,A_N\}$. Without loss of generality suppose that $A_1,\ldots,A_N$ are arranged 
in decreasing order of the value of $f$ on the cell $A_i$.
The lemma follows from lemma \ref{lemma::smallest}.

\begin{lemma}
\label{lem::piececonst}
With the notation introduced above suppose that we construct a set $A = \bigcup_{i=1}^t A_i$
and let $\theta = \int_A f(x)$ then 
we have that for any $\gamma \leq 1$
\begin{align*}
\int_{A} f^{\gamma}(x) dx \leq \inf_{B, \int_{B} f(x) \geq \theta} \int_B f^{\gamma}(x) dx.
\end{align*}
\end{lemma}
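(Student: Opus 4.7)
The plan is to mirror the argument already used in Lemma~\ref{lemma::smallest}, which establishes the analogous fact for a density $p$ and its super-level sets. First I would observe that $A = \bigcup_{i=1}^t A_i$ is essentially a super-level set of $f$: because $f$ is constant on each $A_i$ and the cells are arranged in decreasing order of their $f$-value, we have $\inf_{x \in A} f(x) \geq \sup_{x \notin A} f(x)$.

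Next, fix any competitor $B$ with $\int_B f \geq \theta$, and decompose using the symmetric differences $S_1 := A \setminus B$ and $S_2 := B \setminus A$. Additivity of the integral gives $\int_A f - \int_B f = \int_{S_1} f - \int_{S_2} f$, so the hypothesis $\int_B f \geq \theta = \int_A f$ yields $\int_{S_1} f \leq \int_{S_2} f$, while the super-level-set property of $A$ yields $\inf_{S_1} f \geq \sup_{S_2} f$. Since $\gamma \leq 1$, the function $g(x) := f^{\gamma - 1}(x)$ (defined on $\{f > 0\}$; points where $f = 0$ contribute nothing to either integral) is non-increasing in $f$, so $\sup_{S_1} g \leq \inf_{S_2} g$. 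Writing $f^\gamma = f \cdot g$ and chaining gives
$$
\int_{S_1} f^\gamma \;\leq\; \Bigl(\sup_{S_1} g\Bigr) \int_{S_1} f \;\leq\; \Bigl(\inf_{S_2} g\Bigr) \int_{S_2} f \;\leq\; \int_{S_2} f^\gamma,
$$
and adding $\int_{A \cap B} f^\gamma$ to both sides produces $\int_A f^\gamma \leq \int_B f^\gamma$. Taking the infimum over admissible $B$ gives the claim.

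If one prefers a direct reduction to Lemma~\ref{lemma::smallest} rather than re-running its proof, assume $\int f < \infty$ (restricting to $A \cup B$ if necessary) and apply that lemma to the probability density $f/\int f$ with threshold equal to the common $f$-value on $A_t$; the normalizing constants appear identically on both sides of the resulting inequality and cancel. There is essentially no substantive obstacle here, as the entire argument is structural. The only minor book-keeping concerns ties between cell values (absorbed by the weak inequality $\inf_{S_1} f \geq \sup_{S_2} f$) and the possibility that $f$ vanishes on part of the domain (handled by restricting $g$ to $\{f > 0\}$, since both $\int f^\gamma$ integrands vanish on $\{f = 0\}$).
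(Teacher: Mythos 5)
Your proposal is correct and matches the paper, which simply notes that Lemma~\ref{lem::piececonst} ``follows from Lemma~\ref{lemma::smallest}.'' Your second route (recognize $A$ as a super-level set of $f$, normalize $f$ to a density, apply Lemma~\ref{lemma::smallest}, and cancel the normalizing constant) is precisely the intended reduction, and your first route just re-runs that lemma's proof verbatim for $f$.
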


\noindent The following result is the discrete analogue of the one above. Suppose that 
we have a sequence $\{p_1,\ldots,p_d\}$ of positive numbers sorted as $p_1 \geq p_2 \geq \ldots p_d$.
By replacing Lebesgue measure in Lemma \ref{lemma::smallest} by the counting measure we get:

\begin{lemma}
\label{lem::flat}
Suppose we construct a set of indices $A = \{1,\ldots,t\}$ and let $\theta = \sum_{i=1}^t p_i$, then we
have that,
\begin{align*}
\sum_{i=1}^t p_i^{2/3} \leq 
\min_{\mathcal{J}} \sum_{j \in \mathcal{J}, \sum_{k \in \mathcal{{J}}} p_k \geq \theta} p_j^{2/3}.
\end{align*}
\end{lemma}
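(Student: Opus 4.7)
The plan is to adapt the proof of Lemma~\ref{lemma::smallest} essentially verbatim, replacing Lebesgue integrals by finite sums and the function $g(x) = 1/p^{1-\gamma}(x)$ by its discrete counterpart. First I would fix an arbitrary candidate index set $\mathcal{J}$ satisfying $\sum_{k \in \mathcal{J}} p_k \geq \theta$, and set $S_1 = A \setminus \mathcal{J}$ and $S_2 = \mathcal{J} \setminus A$. Since the terms on $A \cap \mathcal{J}$ cancel in the difference, the problem reduces to showing
\begin{equation*}
\sum_{i \in S_1} p_i^{2/3} \;\leq\; \sum_{j \in S_2} p_j^{2/3}.
\end{equation*}

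The next step is to record the three key properties of $S_1$ and $S_2$ that mirror those used in Lemma~\ref{lemma::smallest}: (i) they are disjoint by construction; (ii) because $A$ consists of the $t$ largest entries of the sorted sequence, every index in $S_1 \subseteq \{1,\ldots,t\}$ has $p_i$ at least as large as every $p_j$ with $j \in S_2 \subseteq \{t+1,\ldots,d\}$; and (iii) the mass inequality $\sum_{i \in S_1} p_i \leq \sum_{j \in S_2} p_j$ follows from rewriting the hypothesis $\sum_{k \in \mathcal{J}} p_k \geq \sum_{i \in A} p_i$ after cancelling the common terms on $A \cap \mathcal{J}$.

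Finally, I would introduce the weight $g(p) := 1/p^{1/3}$ so that $p^{2/3} = p \cdot g(p)$, and observe that $g$ is decreasing in $p$. Combining this monotonicity with property (ii) gives $\sup_{i \in S_1} g(p_i) \leq \inf_{j \in S_2} g(p_j)$, and one then chains the inequalities exactly as in the proof of Lemma~\ref{lemma::smallest}:
\begin{equation*}
\sum_{i \in S_1} p_i^{2/3} \leq \Big(\sup_{i \in S_1} g(p_i)\Big) \sum_{i \in S_1} p_i \leq \Big(\inf_{j \in S_2} g(p_j)\Big) \sum_{j \in S_2} p_j \leq \sum_{j \in S_2} p_j^{2/3},
\end{equation*}
which is the required inequality. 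Since $\mathcal{J}$ was arbitrary, taking the minimum over $\mathcal{J}$ completes the argument. There is no real obstacle here beyond bookkeeping; the only thing to be careful about is the handling of possible ties in the $p_i$ values, which is harmless because the sorting assumption still allows us to designate the specific indices $\{1,\ldots,t\}$ as $A$, and the argument above uses only weak inequalities between $p$-values in $S_1$ versus $S_2$.
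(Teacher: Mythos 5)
Your proposal is correct and matches the paper's approach: the paper proves Lemma~\ref{lem::flat} simply by invoking Lemma~\ref{lemma::smallest} with Lebesgue measure replaced by counting measure, and your argument is exactly that discrete translation (with $S_1 = A \setminus \mathcal{J}$, $S_2 = \mathcal{J}\setminus A$, the ordering of values, the mass inequality, and the monotone weight $g$). Your parenthetical about ties is also the right observation--the argument only needs the weak inequality that every value in $S_1$ dominates every value in $S_2$, which the sorting into $\{1,\dots,t\}$ versus $\{t+1,\dots,d\}$ provides.
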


\subsection{Proof of Lemma~\ref{lem:uppermain}}
We divide the proof into two steps: the first step analyzes the output of Algorithm~\ref{alg:one},
and the second step analyzes the pruning of Algorithm~\ref{alg:two}.

\subsubsection{Analysis of Algorithm~\ref{alg:one}}
We analyze Algorithm~\ref{alg:one}, with the paramters: $\theta_1 = 1/(2\lipcons)$ and
$a, b = \critradn/1024$. We allow $\theta_2 > 0$ to be arbitrary.

Before turning our attention to the main properties, we verify that the partition created by Algorithm~\ref{alg:one} is indeed finite.
It is immediate to check that the partition $\mathcal{P}^{\dagger} = \{A_1,\ldots,A_{\bigN},A_{\infty}\}$ has the property that $P(A_{\infty}) \leq a + b,$ which yields the upper bound of property~\eqref{eqn:prop6f}.
We claim that no cell $A_i$
has very small diameter.  Recall that Algorithm~\ref{alg:one} is run on $S_{a}$ a set of probability content $1 - a$ (centered around the mean of $p_0$).
Define, 
\begin{align*}
p_{\min} = \frac{b}{\text{vol}(S_a)},
\end{align*}
Suppose that,
\begin{equation}
\text{diam}(A_i) < \frac{1}{4}\min \left\{ \theta_1 p_{\min}, \theta_2 p_{\min}^{\gamma} \right\},
\end{equation}
then let us denote the parent cell of $A_i$ by $U_i$ and its centroid by $y_i$. The parent cell $U_i$,
satisfies the condition that:
\begin{align*}
\text{diam}(U_i) <  \frac{1}{2} \min \left\{ \theta_1 p_{\min}, \theta_2 p_{\min}^{\gamma} \right\}.
\end{align*}
Since this cell was split, we must have that neither stopping rule~\eqref{eq:stop1} or~\eqref{eq:stop2} was satisfied. We claim that if the second stopping rule was not satisfied it must be the case that,
\begin{align*}
\nul(y_i) \leq \frac{p_{\min}}{2}.
\end{align*}
Indeed, if the second rule is not satisfied we obtain that:
\begin{align*}
\min \left\{ \theta_1p_0(y_i), \theta_2 p_0^{\gamma}(y_i) \right\} \leq \frac{1}{2} \min \left\{ \theta_1 p_{\min}, \theta_2 p_{\min}^{\gamma} \right\},
\end{align*}
which via some simple case analysis of the min's, together with the fact that $\gamma < 1$ yields the desired claim. Now using the Lipschitz property 
and the fact that $\theta_1 = 1/(2\lipcons)$,
we have that:
\begin{align*}
\sup_{x \in U_i} \nul(x) \leq \nul(y_i) + \lipcons \text{diam}(U_i) < \frac{p_{\min}}{2} + \frac{p_{\min}}{4} < p_{\min}.
\end{align*}
This means that the first stopping rule was in fact satisfied and we could not have split $U_i$.
This in turn means that every cell in our partition (excluding $A_{\infty}$) has diameter at least:
\begin{align*}
\text{diam}(A_i) > \frac{1}{4}\min \left\{ \theta_1 p_{\min}, \theta_2 p^{\gamma}_{\min} \right\}.
\end{align*}
This yields that our produced partition is finite and in turn that algorithm terminates in a finite number of steps.

\noindent {\bf Proof of Claim~\ref{eqn:prop1f}: } 
Our final task is to show that the partition satisfies the condition that,
\begin{align*}
\frac{1}{4} \min \left\{ \theta_1 \nul(x_i),\theta_2 \nul^\gamma(x_i) \right\} \leq \text{diam}(A_i) \leq \min \left\{ \theta_1 \nul(x_i),\theta_2 \nul^\gamma(x_i) \right\}.
\end{align*}
The upper bound is straightforward since it is enforced by our stopping rule. To observe that the lower bound is always satisfied we note that if
\begin{align*}
\text{diam}(A_i) < \frac{1}{4} \min \left\{ \theta_1 \nul(x_i),\theta_2 \nul^\gamma(x_i) \right\},
\end{align*}
then denoting the parent cell of $A_i$ to be $U_i$ (with centroid $y_i$) we obtain that,
\begin{align*}
\text{diam}(U_i) < \frac{1}{2} \min \left\{ \theta_1 \nul(x_i),\theta_2 \nul^\gamma(x_i) \right\}.
\end{align*}
Using this we obtain that,
\begin{align*}
\nul(y_i) \geq \nul(x_i) - \lipcons \text{diam}(U_i) \geq \frac{3}{4} \nul(x_i).
\end{align*}
This yields that,
\begin{align*}
\text{diam}(U_i) < \frac{1}{2(3/4)^\gamma} \min \left\{ \theta_1\nul(y_i), \theta_2 \nul^\gamma(y_i) \right\} <   \min \left\{ \theta_1\nul(y_i), \theta_2 \nul^\gamma(y_i) \right\},
\end{align*}
where in our final step we use the fact that $\gamma < 1$. This results in a contradiction since this means that $U_i$ satisfies our stopping rule and would not have been split.

\noindent {\bf Proof of Claim~\eqref{eqn:prop2f}: } This is a straightforward consequence of the previous property. In particular, we have that $\text{diam}(A_i) \leq \theta_1 p_0(x_i)$, with $\theta_1 = 1/(2\lipcons)$ so that,
\begin{align*}
\sup_{x \in A_i} p_0(x) \leq p_0(x_i) + \lipcons \frac{\theta_1 p_0(x_i)}{2} \leq \frac{5}{4} p_0(x_i). 
\end{align*}
Similarly,
\begin{align*}
\inf_{x \in A_i} p_0(x) \leq p_0(x_i) - \lipcons \frac{\theta_1 p_0(x_i)}{2} \leq \frac{3}{4} p_0(x_i),
\end{align*}
which yields the desired claim.

\subsubsection{Analysis of Algorithm~\ref{alg:two}}
We now turn our attention to studying the properties of the pruned partition $\mathcal{P} = \{A_1,\ldots,A_{\smallN},A_{\infty}\}$. For this algorithm, we choose $\theta_2 = \critradn/(8\lipcons \mu(1))$ and take $c = \critradn/512$.


\noindent {\bf Proof of Claim~\eqref{eqn:prop1f}: } The pruning algorithm completely eliminates some cells, adding them to $A_{\infty}$. 
In the case when $\mathcal{Q}(j^*) \leq c/5$ we change the diameter of the final cell $A_{\smallN},$
shrinking it by a $1 - \alpha$ factor. By definition $\alpha \leq 1/5$, and this yields Claim~\eqref{eqn:prop1f}.

\noindent {\bf Proof of Claim~\eqref{eqn:prop2f}: } Since the pruning step either eliminates cells, adding them to $A_{\infty}$, or reduces their diameter this claim follows directly from the fact that this property holds for $\mathcal{P}^{\dagger}$.

\noindent {\bf Proof of Claim~\eqref{eqn:prop6f}: } The pruning eliminates cells of total additional mass at most $c$ so we obtain that, $P(A_{\infty}) \leq a + b + c \leq \critradn/256$ verifying the upper bound in~\eqref{eqn:prop6f}. To verify the lower bound, we claim that the difference in the probability 
mass of the unpruned partition, $\{A_1,\ldots,A_{\bigN}\}$ and the pruned partition $\{A_1,\ldots,A_{\smallN}\}$ is at least $c/5$, i.e.
\begin{align*}
P_0\Big( \bigcup_{j=1}^{\tilde N}A_j\Big)-P_0\Big( \bigcup_{j=1}^{N}A_j\Big) \geq c/5.
\end{align*}
In the case when $\mathcal{Q}(j^*) \geq c/5$ the claim is direct. When this is not the case then the cell $A_N$ was too large, so that $\mathcal{Q}(j^*) + P_0(A_N) \geq c$, which implies that,
$P_0(A_N) \geq 4c/5$.
Let $x_N$ be the center of $A_N$.
Using property~\eqref{eqn:prop2f} and the fact that $(1 - \alpha))^d \leq (1 - \alpha)$ verify that,
\begin{align*}
P_0(D_1) \leq 4(1 - \alpha) P_0(A_N).
\end{align*}
Using the definition of $\alpha$ we obtain that $P_0(D_1) \geq c/5$ as desired.

\noindent {\bf Proof of Claim~\eqref{eqn:prop3f}: } We claim that the partition satisfies the property that,
\begin{align}
\label{eqn:randclaim}
\lipcons \sum_{i=1}^{\smallN} \text{diam}(A_i) \text{vol}(A_i) \leq \frac{\critradn}{4}.
\end{align}
Taking this claim as given we verify the property~\eqref{eqn:prop3f}.
We divide the proof into two cases:
\begin{enumerate}
\item $P(A_{\infty}) \geq \critradn/4$: In this case we obtain that,
\begin{align*}
\sum_{i=1}^N | P_0(A_i) - P(A_i) | + | P_0(A_{\infty}) - P(A_{\infty})| \geq 
| P_0(A_{\infty}) - P(A_{\infty})| \geq \critradn/8,
\end{align*}
using the upper bound in property~\eqref{eqn:prop6f}.

\item $P(A_{\infty}) \leq \critradn/4$: In this case we observe that,
\begin{align*}
\int_{A_{\infty}} |p_0(x) - p(x)| dx \leq \int_{A_{\infty}} p_0(x) dx +  \int_{A_{\infty}} p(x) dx \leq \frac{3\critradn}{\contwo},
\end{align*}
and this yields that,
\begin{align*}
\int_{\mathbb{R}^d \backslash A_{\infty}} | p_0(x) - p(x)| dx \geq \critradn (1 - 3/\contwo).
\end{align*}
Now denoting by $\bar{p}$ the approximation of $p$ by a density equal to the average of $p$ on each cell of the partition we have that,
\begin{align*}
\int_{\mathbb{R}^d \backslash A_{\infty}} | p_0(x) - p(x)| dx
\leq & \int_{\mathbb{R}^d \backslash A_{\infty}}| p_0(x) - \bar{p_0}(x)| dx + 
\int_{\mathbb{R}^d \backslash A_{\infty}}| p(x) - \bar{p}(x)| dx \\
& + \sum_{i=1}^N | p_0(A_i) - p(A_i) | dx.
\end{align*}
For any $\lipcons$-Lipschitz density we have that,
\begin{align*}
\int_{\mathbb{R}^d \backslash A_{\infty}}| p(x) - \bar{p}(x)| dx \leq 
 \lipcons \sum_{i=1}^N \text{diam}(A_i) \text{vol}(A_i) \leq \frac{\critradn}{4},
\end{align*}
using claim~\eqref{eqn:randclaim}.
This yields that,
\begin{align*}
\sum_{i=1}^N | p_0(A_i) - p(A_i) | + | p_0(A_{\infty}) - p(A_{\infty})| \geq
\sum_{i=1}^N | p_0(A_i) - p(A_i) | \geq  \critradn(1 - 7/\contwo) = \critradn/\contwo,
\end{align*}
as desired.
\end{enumerate}

\noindent It remains to prove claim~\eqref{eqn:randclaim}. Notice that,
\begin{align*}
\lipcons \sum_{i=1}^{\smallN} \text{diam}(A_i) \text{vol}(A_i) &\leq 
\sum_{i=1}^N \min \left\{ \theta_1 p_0(x_i), \theta_2 p_0^{\gamma}(x_i)\right\} \text{vol}(A_i) \stackrel{\text{(i)}}{\leq} 2 \int_{\mathbb{R}^d}  \left\{ \theta_1 p_0(x),\theta_2 p_0^{\gamma}(x)\right\} dx \\
&= 2 \int_{\mathbb{R}^d}   \left\{ \frac{p_0(x)}{2\lipcons}, \frac{\critradn p_0^{\gamma}(x)}{8 \lipcons \mu(1/4)} \right\} dx \\
&\stackrel{\text{(ii)}}{=} \frac{\critradn}{4},
\end{align*}
where step (i) uses property~\eqref{eqn:prop2f} and (ii) uses the definition of $\mu$ in~\eqref{eqn:mudef}.

\noindent {\bf Proof of Claim~\eqref{eqn:prop4f}: } Recall that we have chosen $c = \critradn/512$. In order to prove this claim we need to use properties of the pruning step. Let us define $\widetilde{p}_0(x)$  
as the piecewise constant density formed by replacing $p_0(x)$ by its maximum value over the cell containing $x$, and $0$ outside the support of $\{A_1,\ldots,A_{\bigN}\}$. We note that,
\begin{align}
\label{eqn::one}
 \int_{K} p_0^{\gamma}(x) dx  \leq \int_{K} \widetilde{p}_0^{\gamma}(x) dx.
\end{align}
Now, abusing notation slightly and ignoring the set $A_{\infty}$ we denote the original partition as $\{A_1,\ldots,A_{\bigN}\}$, which we take as sorted by the values in $g_0$, 
and the pruned partition as $\{A_1,\ldots,A_{\smallN}\}$, noting that we might potentially have split the last cell $A_{\smallN}$ into two cells. We let $A = \bigcup_{i=1}^{\bigN} A_i$.
Let us denote,
\begin{align*}
\mathcal{B} = \left\{B: B \subset A, \int_{B^c} \widetilde{p}_0(x) dx \leq \int_{K^c} \widetilde{p}_0(x) dx \right\}.
\end{align*}
Using Lemma~\ref{lem::piececonst} we obtain that,
\begin{align}
\label{eqn::two}
\int_{K} \widetilde{p}_0^{\gamma}(x) dx \leq \inf_{B \in \mathcal{B}} \int_B \widetilde{p}_0^{\gamma}(x) dx.
\end{align}
Noting, that 
\begin{align*}
\int_{K^c} \widetilde{p}_0(x) dx \geq \int_{K^c} p_0(x) dx \geq \frac{c}{5},
\end{align*}
and defining,
\begin{align*}
\mathcal{C} = \left\{C: C \subset A, \int_{C^c} \widetilde{p}_0(x) dx \leq \frac{c}{5} \right\}.
\end{align*}
we obtain that,
\begin{align}
\label{eqn::three}
 \inf_{B \in \mathcal{B}} \int_B \widetilde{p}_0^{\gamma}(x) dx \leq \inf_{C \in \mathcal{C}} \int_C \widetilde{p}_0^{\gamma}(x) dx.
\end{align}
Defining, 
\begin{align*}
\mathcal{D} = \left\{D: D \subset A, \int_{D^c} p_0(x) dx \leq \frac{c}{10} \right\},
\end{align*}
we see that
$$
{\cal D}\subset {\cal C} \subset {\cal B}
$$
so that
\begin{align}
\label{eqn::four}
 \inf_{C \in \mathcal{C}} \int_C \widetilde{p}_0^{\gamma}(x) dx \leq  2^\gamma \inf_{D \in \mathcal{D}} \int_D p_0^{\gamma}(x) dx \leq 2^\gamma T^{\gamma}_{c/10}.
\end{align} 
Putting together Equations~\eqref{eqn::one}, \eqref{eqn::two}, \eqref{eqn::three} and~\eqref{eqn::four} we obtain the desired result.

\noindent {\bf Proof of Claim~\eqref{eqn:prop5f}: } In order to lower bound the density over the pruned partition we 
will show that our pruning step is approximately a level set truncation.
We have that for any point $x$ that is removed and any point $y$ that is retained it must be the case that,
\begin{align*}
p_0(x) \leq 2p_0(y).
\end{align*}
where we used property~\eqref{eqn:prop2f}.
Let $K$ denote the set of points retained by the pruning. The above observation yields that,
there exists some $t \geq 0$ such that,
\begin{align*}
\{p_0 \geq t \} \subseteq K \subseteq \{ p_0 \geq t/2\}. 
\end{align*}
We know that $\int_{K} p_0(x) dx \leq 1 - c/10$. 
Consider, the set 
\begin{align*}
G(u) = \left\{ x:  p_0(x) \geq u \right\}.
\end{align*}
Suppose that for some $u$ we can show that,
\begin{align*}
\mathbb{P}(K) \leq \mathbb{P}(G(u)),
\end{align*}
then we can conclude that $t \geq u$, and further that the density on $K$ is at least $u/2$. It thus only remains to find a value $u$ such that $\mathbb{P}(G(u)) \geq 1 - c/10$.
Suppose we choose $u = \left(\frac{c}{10 \mu(c/(10\epsilon))}\right)^{1/(1-\gamma)},$ and recall that,
\begin{align*}
\epsilon = \int \min \left\{ \frac{p_0(x)}{c/(10\epsilon)}, \frac{\epsilon p_0^{\gamma}(x)}{\mu(c/(10\epsilon))} \right\} dx. 
\end{align*}
Over the set $G^{c}$ the minimizer is always the first term above which yields,
\begin{align*}
\epsilon \geq \int_{G^c} \frac{p_0(x)}{c/(10\epsilon)} dx,
\end{align*}
i.e. that $\mathbb{P}(G^c) \leq c/10$, as desired. This in turn yields the claim.

\subsection{Proof of Lemma~\ref{lem:lipvfunc}}
To show this, it suffices to show that
more mass is truncated from $q$ than is truncated from $p$, i.e.
that
\begin{equation}\label{eq::more}
\sum_{s+1}^{\smallN + 1} q_i \geq P(A_{\infty}),
\end{equation}
and then we apply
Lemma~\ref{lem::flat}.
To show (\ref{eq::more}) we proceed as follows.
Note that $P(A_{\infty}) = q_a$ for some $a$.
If $s\leq a$ then 
$\sum_{s+1}^{\smallN + 1} q_i \geq P(A_{\infty})$ follows immediately.
Now suppose that
$s > a$.
From the definition of $s$ we know that
$q_s + \sum_{s+1}^{N+1}q_i \geq \epsilon/128$
so that
$\sum_{s+1}^{N+1}q_i \geq \epsilon/128 - q_s$.
Since $s>a$,
$q_s \leq P(A_{\infty}) \leq \epsilon/256$ and so
$\sum_{s+1}^{N+1}q_i \geq \epsilon/256 \geq P(A_{\infty})$ so that
$\sum_{s+1}^{\smallN + 1} q_i \geq P(A_{\infty})$
as required.
Thus (\ref{eq::more}) holds.

\newcommand{\sigman}{\widetilde{\sigma}}
\newcommand{\critradf}{w_n}
\newcommand{\critradfa}{w^a_n}

\section{Adapting to Unknown  Parameters}
\label{app:adapt}
In this section, we consider ways to choose the parameter $\sigma$ for the max test, and for the test in \cite{valiant14}, and then consider tests that are adaptive to the typically unknown smoothness parameter $\lipcons$.

\subsection{Choice of $\sigma$}
The max test and the test from \cite{valiant14} require choosing the truncation parameter $\sigma = \critradn/8$. 
In typical settings, we do not assume that $\critradn$ is known. We consider the case of the test from \cite{valiant14} though our ideas generalize to the max test in a straightforward way. 

Perhaps the most natural way to choose the parameter $\sigma$ is to solve the critical equation and choose $\sigma$ accordingly, i.e.
we find $\sigman$ that satisfies:
\begin{align}
\label{eqn:sigman}
\sigman = \max\left\{ \frac{1}{n}, \sqrt{ \frac{V_{\sigman/16}(p_0)}{n}} \right\},
\end{align}
and then 
we choose the tuning parameter
$\sigma := C \max\{1/\alpha,1/\zeta\} \sigman$, for a sufficiently large constant $C > 0$. 

When the unknown $\critradn \geq 8 C \max\{1/\alpha,1/\zeta\} \sigman,$ then it is clear that our choice
guarantees that the tuning parameter $\sigma$ is chosen sufficiently small, i.e. $\sigma \leq \critradn/8$ as desired. It is also clear that the test has size at most $\alpha$. It remains to understand the Type II error.
Inverting the above relationship we see that,
\begin{align*}
n = \max \left\{ \frac{1}{\sigman},  \frac{ V_{\sigman/16} (p_0)}{ \sigman^2 }\right\}.
\end{align*}
Noting that $\sigma/2 \geq \sigman/16$, and that $C \max\{1/\alpha,1/\zeta\} \geq 1$ we obtain that,
\begin{align*}
n \geq C \max \{1/\alpha, 1/\zeta \} \max \left\{ \frac{1}{\sigma}, \frac{ V_{\sigma/2} (p_0)}{ \sigma^2 }\right\} \geq  C \max \{1/\alpha, 1/\zeta \} \max \left\{ \frac{1}{\sigma},\frac{ V_{\sigma/2} (p_0)}{ \critradn^2 }\right\}.
\end{align*}
An application of Lemma~\ref{lem:valinter} shows that the Type II error of the test is at most $\zeta$ as desired. Thus, we see that this test provides the same result as the test in Theorem~\ref{thm:valup} without knowledge of $\critradn$.

Although adequate from a theoretical perspective, the previous choice of the parameter depends on an unknown (albeit universal) constant. An alternative is to consider a range of possible values for the parameter $\sigma$ and appropriately adjust the threshold $\alpha$ via a Bonferroni correction. One natural range is to consider scalings of the parameter $\sigman$ in~\eqref{eqn:sigman}. More generally, if we considered $\Sigma = \{\sigma_1,\ldots,\sigma_K\}$, a natural goal would be to compare the risk of the Bonferroni corrected test to the oracle test which minimizes the risk over the set $\Sigma$ of possible tuning parameters. We leave a more detailed analysis of this test to future work.

\subsection{Adapting to unknown $\lipcons$}
Our tests for Lipschitz testing, in addition to assuming knowledge of $\critradn$ use knowledge of the Lipschitz constant $\lipcons$ in constructing the binning. The techniques from the previous section can be used to construct tests without knowledge of $\critradn$. Constructing 
tests which are adaptive to unknown smoothness parameters is a problem which has received much attention in classical works. 
We focus only on establishing upper bounds. Some lower bounds follow from standard arguments and we highlight important open questions in the sequel.

In order to define precisely the notion of an adaptive test, we follow the prescription of \citet{spokoiny96} (see also \cite{ingster1997adaptive,gine15}). 
As in~\eqref{eqn:liprad} define a sequence of critical radii $\critradf(p_0,L)$ as the solutions to the critical equations:
\begin{align*}
\critradf(p_0,L) = \left( \frac{L^{d/2}  T_{c \critradf(p_0,L)}(p_0)}{n} \right)^{2/(4+d)}
\end{align*}
for a sufficiently small constant $c > 0$. We now define the adaptive upper critical radii as the solutions to the critical equations:
\begin{align}
\label{eqn:adapliprad}
\critradfa(p_0,L) =  \left( \frac{L^{d/2}  \log(n)T_{c \critradfa(p_0,L)}(p_0)}{n} \right)^{2/(4+d)}.
\end{align}
We can upper bound the ratio:
\begin{align*}
\frac{\critradfa(p_0,L)}{\critradf(p_0,L)} \leq (\log(n))^{2/(4+d)}.
\end{align*}
This ratio upper bounds the price for adaptivity. It will be necessary to distinguish the (known) smoothness parameter of the null from the possibly unknown parameter $\lipcons$ in~\eqref{test:lip}.
We will denote the smoothness parameter of $p_0$ by $L_0$. We note that in the setting where $\lipcons$ was known, we assumed that both $p_0, p \in \mathcal{L}(\lipcons)$ and this in turn requires that $\lipcons \geq L_0$. 

We take $\alpha, \zeta > 0$ to be fixed constants.
For a sufficiently large constant $C > 0$ we define the class of densities:
\begin{align*}
\mathcal{L}(\lipcons, \critradfa) = \{ p: p \in \mathcal{L}(\lipcons),   \|p - p_0\|_1 \geq C \max\{1/\alpha,1/\zeta\} \critradfa(p_0, \lipcons) \}.
\end{align*}
For some $p_0 \in \mathcal{L}(L_0)$, consider 
the hypothesis testing problem of distinguishing:
\begin{align}
\label{eqn:adapttest}
H_0: p = p_0, p_0 \in \mathcal{L}(L_0) ~~~\text{versus}~~~H_1: p \in \bigcup_{\lipcons \geq L_0} 
\mathcal{L}(\lipcons,\critradfa).
\end{align}
In order to precisely define our testing procedure we first show that there are natural upper bounds
on $\lipcons$. In particular, we claim that when $\lipcons \gg n^{2/d} L_0$ then the critical radius
remains lower bounded by a constant.

We have the following lemma. We let $C_{\ell},c > 0$ denote universal constants. 
\begin{lemma}
\label{lem:aux}
If $\lipcons \geq C_{\ell} n^{2/d} L_0$, then 
\begin{align*}
\critradn(p_0,\lipcons) \geq c. 
\end{align*}
\end{lemma}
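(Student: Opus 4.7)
The plan is to invoke the lower bound half of Theorem~\ref{thm:main} and then exploit the fact that a density in $\mathcal{L}(L_0)$ cannot be too spiky, so its truncated $T$-functional cannot be too small.

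First I would verify the hypothesis of the lower bound: since $\lipcons \geq C_\ell n^{2/d} L_0 \geq C_\ell L_0$ for $n\geq 1$, choosing $C_\ell \geq 1/\intconst$ gives $L_0 \leq \intconst \lipcons$, so $p_0 \in \lipclass(\intconst \lipcons)$. Theorem~\ref{thm:main} then supplies $\critradn(p_0,\lipcons) \geq c\, v_n(p_0)$, with $v_n$ the unique solution of $v_n^{(4+d)/2} = \lipcons^{d/2} T_{C v_n}(p_0)/n$.

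Second I would prove the key auxiliary bound: for every $\sigma \in [0,1/2]$,
\begin{equation*}
T_\sigma(p_0) \;\geq\; \frac{c_0}{L_0^{d/2}}
\end{equation*}
for an absolute constant $c_0>0$. The argument uses two ingredients. (a) A Lipschitz density is continuous, integrable, and therefore attains its supremum $M := \|p_0\|_\infty$ at some mode $x^*$; since $p_0 \geq M/2$ on $B(x^*, M/(2L_0))$, integrating this pointwise bound gives $1 \geq c_d\, M^{d+1}/L_0^d$, i.e.\ $M \leq C\, L_0^{d/(d+1)}$. (b) For any $B \in \mathcal{B}_\sigma$, Hölder's inequality with conjugate exponents $1/\gamma$ and $1/(1-\gamma)$ yields
\begin{equation*}
1-\sigma \;\leq\; \int_B p_0 \;=\; \int_B p_0^\gamma \cdot p_0^{1-\gamma} \;\leq\; M^{1-\gamma} \int_B p_0^\gamma,
\end{equation*}
so $T_\sigma^\gamma(p_0) \geq (1-\sigma)/M^{1-\gamma}$. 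Raising to the $1/\gamma$ power and inserting the bound on $M$, and noting that $\gamma = 2/(3+d)$ forces the exponent $(1-\gamma)/\gamma \cdot d/(d+1)$ to equal exactly $d/2$, produces the claimed $L_0^{-d/2}$ lower bound.

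Third, I would close the argument by monotonicity of the critical equation. Let $g(v) = \lipcons^{d/2} T_{Cv}(p_0)/n$, which is non-increasing in $v$, and $h(v) = v^{(4+d)/2}$, which is strictly increasing, so $v_n$ is their unique intersection and is a non-decreasing function of $\lipcons$. To establish $v_n \geq c$ it suffices to exhibit a constant $c$ with $g(c) \geq h(c)$, which after substituting $\lipcons \geq C_\ell n^{2/d} L_0$ and $T_{Cc}(p_0) \geq c_0/L_0^{d/2}$ (valid once $Cc \leq 1/2$) collapses to $C_\ell^{d/2}\, c_0 \geq c^{(4+d)/2}$. Arranging this by taking $C_\ell$ large and $c$ small gives $\critradn(p_0,\lipcons) \geq c\, v_n \geq c^2$, which is the lemma.

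The main obstacle is the lower bound on $T_\sigma(p_0)$: the Lipschitz constraint has to be converted into an upper bound on $\|p_0\|_\infty$ and then transferred to a lower bound on a $\gamma$-norm via Hölder, and one must check that the bookkeeping of exponents lines up to give precisely $L_0^{-d/2}$, which cancels the factor $\lipcons^{d/2} = (C_\ell n^{2/d})^{d/2} L_0^{d/2}$ in the critical equation and leaves behind a dimensionless constant. Everything else reduces to monotonicity of the implicit equation defining $v_n$.
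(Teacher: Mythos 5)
Your proof is correct and follows essentially the same route as the paper: reduce the lemma to showing the solution of the critical equation exceeds a constant via monotonicity, and establish the key bound $T_\sigma(p_0)\gtrsim L_0^{-d/2}$ (the paper's claim~\eqref{eqn:lbclaim}) by first bounding $\|p_0\|_\infty \lesssim L_0^{d/(d+1)}$ through a local mass argument and then lower-bounding the $\gamma$-norm by the sup-norm/H\"older step. Your ball-based bound on $\|p_0\|_\infty$ and the explicit check that $p_0\in\lipclass(\intconst\lipcons)$ are only cosmetic variations on the paper's cone argument.
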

Thus we restrict our attention to the regime where $\lipcons \in [L_0, Cn^{2/d} L_0]$, for a sufficiently large constant $C > 0$. A natural strategy is then to consider a discretization of the set of possible values for $L_n$, 
\begin{align*}
\mathfrak{L} = \{L_0, 2L_0, \ldots, 2^{ \log_2 (Cn^{2/d})} L_0\}. 
\end{align*} 
Our adaptive test then simply performs the binning test described in Theorem~\ref{thm:main} for each choice of $\lipcons \in \mathfrak{L}$, with the threshold $\alpha$ reduced by a factor of $\lceil \log_2 (Cn^{2/d}) + 1 \rceil$.
We refer to this test as the adaptive Lipschitz test. We have the following result:
\begin{theorem}
\label{thm:adapt}
Consider the testing problem in~\eqref{eqn:adapttest}. The adaptive Lipschitz test has Type I error at most $\alpha$, and has Type II error at most $\zeta$.
\end{theorem}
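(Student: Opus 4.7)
The plan is to handle the two error types separately, with the Type I bound being essentially immediate from a union bound and the Type II bound requiring us to verify that for each realization of the true (but unknown) Lipschitz constant, at least one of the tests in the discretization succeeds with the adjusted threshold.

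For the Type I error, the adaptive test rejects whenever any of the $K := \lceil \log_2(Cn^{2/d}) + 1 \rceil$ individual Lipschitz tests rejects, each operated at level $\alpha/K$. Since under $H_0$ each individual test controls size at $\alpha/K$ by Theorem~\ref{thm:main}, a union bound immediately yields $P_0(\text{reject}) \leq K \cdot (\alpha/K) = \alpha$.

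For the Type II error, fix any alternative $p \in \mathcal{L}(\lipcons^*, \critradfa)$ for some $\lipcons^* \geq L_0$. By Lemma~\ref{lem:aux}, if $\lipcons^* > Cn^{2/d} L_0$ then the separation condition $\|p - p_0\|_1 \geq C\max\{1/\alpha, 1/\zeta\} \critradfa(p_0, \lipcons^*) \geq c$ cannot be achieved in a nontrivial way (the statement there already tells us the critical radius is bounded below by a constant, so outside the range the problem is vacuous after absorbing constants into $C$). Therefore we may assume $\lipcons^* \in [L_0, Cn^{2/d}L_0]$, and there exists $L \in \mathfrak{L}$ with $\lipcons^* \leq L \leq 2\lipcons^*$. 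Since $p \in \mathcal{L}(\lipcons^*) \subseteq \mathcal{L}(L)$ and $p_0 \in \mathcal{L}(L_0) \subseteq \mathcal{L}(L)$, the test tailored to $L$ is a valid Lipschitz test for the pair $(p_0, p)$.

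The key remaining step is to check that the critical radius of the $L$-th test, when operated at level $\alpha/K$ and required to achieve Type II error $\zeta/K$ (so that a further union bound gives $\zeta$, though in fact for Type II we only need one test to succeed so we can simply bound the $L$-th test's Type II error by $\zeta$ directly using $\alpha/K$ in the threshold), is no larger than the adaptive radius $\critradfa(p_0, \lipcons^*)$ up to constants absorbed into $C$. By Theorem~\ref{thm:main} the critical radius of the individual test at level $\alpha/K$ scales as $(L^{d/2} T_{c\critrad}(p_0) / n)^{2/(4+d)}$ multiplied by $\max\{1/(\alpha/K), 1/\zeta\}$, and since $L \leq 2\lipcons^*$ and $K \leq C'\log n$, this quantity is bounded by a constant times $\bigl(\lipcons^{*\,d/2}\log(n) T_{c\critrad}(p_0)/n\bigr)^{2/(4+d)}$, which matches the definition of $\critradfa(p_0, \lipcons^*)$ in~\eqref{eqn:adapliprad}. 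Choosing the constant $C$ in the separation condition large enough to absorb the constants produced by the Bonferroni inflation and by the substitution $L \leq 2\lipcons^*$ gives that the $L$-th test has Type II error at most $\zeta$, and the adaptive test therefore also rejects with probability at least $1-\zeta$.

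The main obstacle is bookkeeping: we must carefully track how the level $\alpha/K$, the mismatch $L \in [\lipcons^*, 2\lipcons^*]$, and the truncation-parameter dependence inside $T_{c\critrad}(p_0)$ interact, and verify that the extra $\log n$ factor is exactly the one that was built into $\critradfa$ in~\eqref{eqn:adapliprad}. The monotonicity of $T_\sigma(p_0)$ in $\sigma$ and the fact that $K$ is only logarithmic in $n$ are what make the bound close up with the $(\log n)^{2/(4+d)}$ payment.
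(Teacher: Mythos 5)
Your overall architecture mirrors the paper's own argument: a Bonferroni bound over the dyadic grid for the Type I error, Lemma~\ref{lem:aux} to confine the unknown parameter to $[L_0, Cn^{2/d}L_0]$, selection of a grid point $L$ with $\lipcons^* \leq L \leq 2\lipcons^*$, and reduction of the Type II bound to the single test indexed by $L$. The gap is in the key bookkeeping step. You import from the statement of Theorem~\ref{thm:main} (equivalently Theorem~\ref{thm:valup}) that the critical radius of an individual test run at level $\alpha/K$ scales as $\max\{K/\alpha,1/\zeta\}$ times $\bigl(L^{d/2}T_{c\critradn}(p_0)/n\bigr)^{2/(4+d)}$, and then assert this is at most a constant times $\bigl(\lipcons^{*\,d/2}\log(n)\,T_{c\critradn}(p_0)/n\bigr)^{2/(4+d)}$. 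That inequality is false: with $K \asymp \log n$ the first expression carries a full multiplicative $\log n$ \emph{outside} the exponent, while the second carries only $(\log n)^{2/(4+d)} = o(\log n)$. Since the Bonferroni inflation grows with $n$, it cannot be ``absorbed into the constant $C$'' in the separation condition; as written, your argument only yields power against separations of order $\log(n)\cdot \critradf(p_0,\lipcons^*)$, which does not match $\critradfa(p_0,\lipcons^*)$ as defined in~\eqref{eqn:adapliprad}.

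The repair---and what the paper's sketch implicitly uses by appealing to the \emph{proof} of Theorem~\ref{thm:main} rather than its statement---is to track the level through the sample-size condition of Lemma~\ref{lem:valinter} applied to the binned multinomial $q$: the requirement is $n \geq 2\max\{2/\alpha',1/\zeta\}\max\{1/\sigma,\,4096\,V_{\sigma/2}(q)/\critradn^2\}$ with $\alpha' = \alpha/K$, so the factor $K \asymp \log n$ multiplies $V_{\sigma/2}(q)/\critradn^2 \lesssim (\lipcons/\critradn)^{d/2}T_{c\critradn}(p_0)/\critradn^2$ in the lower bound on $n$. Solving the resulting critical equation places the $\log n$ \emph{inside} the $2/(4+d)$ power, exactly matching~\eqref{eqn:adapliprad}; the factor from $L \leq 2\lipcons^*$ and the fixed constants $\alpha,\zeta$ are then genuinely absorbable into $C$ and $c$ (and the $1/\sigma$ term is dominated, via the lower bound~\eqref{eqn:lbclaim} on the truncated functional). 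With that substitution your argument closes; without it, the claimed matching with $\critradfa$ does not hold, and this distinction is precisely why the price of adaptivity here is $(\log n)^{2/(4+d)}$ rather than $\log n$.
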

\noindent {\bf Remarks: }
\begin{itemize}
\item Comparing the non-adaptive critical radii in~\eqref{eqn:liprad} and the adaptive critical radii in~\eqref{eqn:adapliprad} we see that we lose a factor of $(\log(n))^{2/(4+d)}.$ A natural question 
is whether such a loss is necessary.
\item Classical results \citep{ingster1997adaptive} consider adapting to an unknown H\"{o}lder exponent $s$ and show that for testing uniformity (with deviations in the $\ell_2$ metric) a loss of a factor $(\sqrt{\log\log(n)})^{2s/(4s+d)}$ is necessary and sufficient. In our setting, the loss is of a logarithmic factor instead of a $\log \log$ factor and this is a consequence of the fact that the high-dimensional 
multinomial tests we build on \cite{valiant14} are not in general exponentially consistent (i.e. their power and size do not tend to zero at an exponential rate). We hope to develop a more precise understanding of this situation in future work.
\end{itemize}

\begin{proof}
The proof follows almost directly from our previous analysis of Theorem~\ref{thm:main} so we only provide a brief sketch.
It is straightforward to check that the Bonferroni correction controls the size of the adaptive Lipschitz test at $\alpha$. Let $j^*$ denote the smallest integer such that, $2^{j^*} L_0 \geq \lipcons.$
In order to bound the Type II error, it is sufficient to show that under the alternate, the test
corresponding to the index $j^*$ rejects the null hypothesis with probability at least $1 - \zeta$. Noting that the ratio $2^{j^*} L_0 /\lipcons \leq 2$ this follows directly from the proof of Theorem~\ref{thm:main}. 
\end{proof}

\subsubsection{Proof of Lemma~\ref{lem:aux}}
In order to establish this claim, it suffices to show that the lower bound on the critical radius in~\eqref{eqn:liprad} is at least a constant. By the monotonicity of the critical equation, it suffices to show that for some small constant $c > 0$ we have that,
\begin{align*}
c \leq \left( \frac{\lipcons^{d/2} T_{Cc}(p_0)}{n} \right)^{2/(4+d)},
\end{align*}
where $C > 0$ is the universal constant in~\eqref{eqn:liprad}.
We choose $c < C/2$ so we obtain that it suffices to show,
\begin{align*}
c \leq \left( \frac{\lipcons^{d/2} T_{1/2}(p_0)}{n} \right)^{2/(4+d)}.
\end{align*}
We claim that for any $p_0 \in \mathcal{L}(L_0)$ there is a universal constant $C_1 > 0$ such that,
\begin{align}
\label{eqn:lbclaim}
T_{1/2}(p_0) \geq \frac{C_1}{L_0^{d/2}}.
\end{align}
Taking this claim as given for now we see that,
\begin{align*}
\left( \frac{\lipcons^{d/2} T_{1/2}(p_0)}{n} \right)^{2/(4+d)} \geq 
\left( \frac{C_1 \lipcons^{d/2}}{L_0^{d/2} n} \right)^{2/(4+d)} \geq \left(\frac{C_1}{C_{\ell}^{d/2}}\right)^{2/(4+d)} \geq c,
\end{align*}
as desired. 

\noindent {\bf Proof of Claim~\eqref{eqn:lbclaim}: } 
As a preliminary we first produce an upper bound on 
any Lipschitz density.
We claim that, there exists a constant $C > 0$ depending only on the dimension such that any $L_0$-Lipschitz density $p_0$ is upper bounded as $\|p_0\|_{\infty} \leq C L_0^{d/(d+1)}.$

Without loss of generality let us suppose the density $p_0$ is maximized at $x = 0$. The density $p_0$ is then lower bounded by the function,
\begin{align*}
g_0(x) = \left(\|p_0\|_{\infty} - L_0 \|x\|\right) \mathbb{I}( \|p_0\|_{\infty} - L_0 \|x\| \geq 0).
\end{align*}
The integral of this function is straightforward to compute, and since $p_0$ must integrate to 1 we obtain that,
\begin{align*}
1 = \int_x p_0(x) dx \geq \int_x g_0(x) dx = \frac{v_d}{d+1}  \frac{\|p\|_{\infty}^{d+1}}{L_0^d},
\end{align*}
where $v_d$ denotes the volume of the $d$-dimensional unit ball.
This in turn yields the upper bound,
\begin{align*}
\|p\|_{\infty} \leq \left( \frac{d+1}{v_d} \right)^{1/(d+1)} L_0^{d/(d+1)},
\end{align*}
as desired.
With this result in place we can lower bound the truncated $T$-functional. In particular, letting $B_{\sigma}$ denote a set of probability content $1 - \sigma$ that (nearly) minimizes
the truncated $T$-functional we have that,
\begin{align*}
T_{\sigma}^{\gamma}(p_0) =  \int_{B_{\sigma}} p_0^{\gamma}(x) dx \geq \int_{B_{\sigma}} \frac{p_0(x)}{\|p\|_{\infty}^{1 - \gamma}} dx \geq \left( \frac{v_d}{d+1}\right)^{1/(3+d)}  \frac{1 - \sigma}{L_0^{d/(3+d)}},
\end{align*}
which gives the bound,
\begin{align*}
T_{\sigma}(p_0) \geq \left( \frac{v_d}{d+1}\right)^{1/2}  \frac{ (1 - \sigma)^{(3+d)/2}} {L_0^{d/2}},
\end{align*}
as desired. Taking $\sigma = 1/2$ yields the desired claim.

\clearpage

\section{Additional Simulations}
\label{app:extra_sims}

\begin{center}
\begin{figure}
\begin{tabular}{cc}
\includegraphics[scale=0.4]{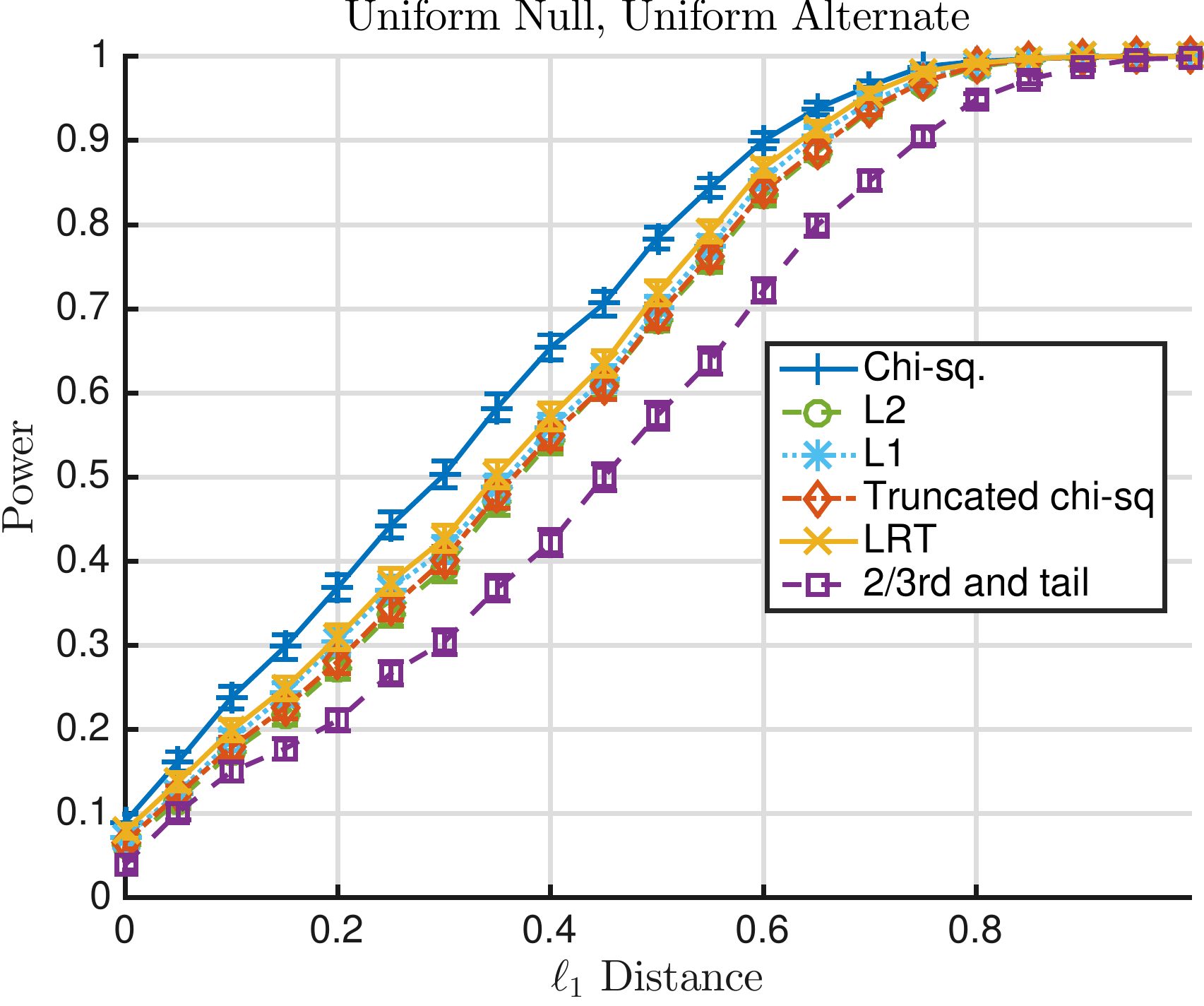} & ~~~~~\includegraphics[scale=0.4]{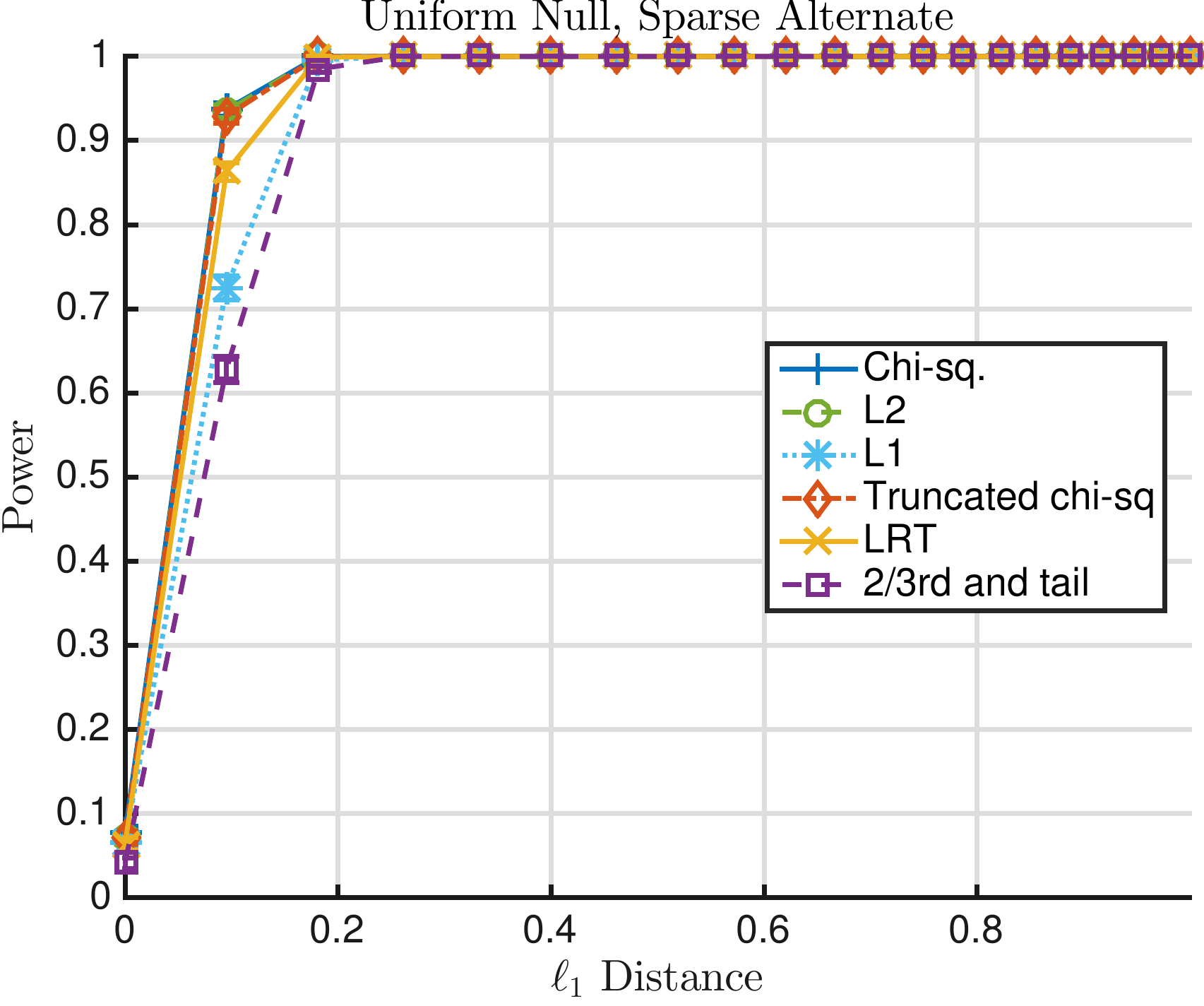}
\end{tabular}
\caption{A comparison between the truncated $\chi^2$ test, the 2/3rd + tail test \cite{valiant14}, 
the $\chi^2$-test, the likelihood ratio test, the $\ell_1$ test and the $\ell_2$ test. 
The null is chosen to be uniform, and the alternate is either a dense or sparse perturbation of the null. The power of the tests are plotted against the $\ell_1$ distance between the null and alternate. Each point in the graph is an average over 1000 trials. Despite the high-dimensionality (i.e. $n = 200, d = 2000$) the tests have high-power, and perform comparably.}
\label{fig:unifapp}
\end{figure}
\end{center}
In this section we re-visit the simulations for multinomials. 
We add comparisons to tests based on the $\ell_1$ and $\ell_2$ statistics which are given as
\begin{align*}
T_{\ellone} = \sum_{i=1}^d |X_i - np_0(i)|,
\end{align*}
and 
\begin{align*}
T_{\elltwo} = \sum_{i=1}^d  (X_i - np_0(i))^2.
\end{align*}
In addition to the alternatives that are created by dense and sparse perturbations of the null we also consider two other perturbations: one where we perturb each coordinate of the null by an amount proportional to the entry $p_0(i)$, and one where we perturb each coordinate by $p_0(i)^{2/3}$, in magnitude with a Rademacher sign. The latter perturbation is close to the worst-case perturbation considered by \cite{valiant14} in their proof of local minimax lower bounds. We take $n = 200, d = 2000$ and each point in the graph is an average over 1000 trials.

Once again we observe that the truncated $\chi^2$ test we propose, and the 2/3rd + tail test from \cite{valiant14} are remarkably robust. All tests are comparable when the null is uniform, while distinctions are clearer for the power law null. 
The $\ell_2$ test appears to have high-power against sparse alternatives suggesting potential avenues for future investigation.

\begin{center}
\begin{figure}
\begin{tabular}{cc}
\includegraphics[scale=0.4]{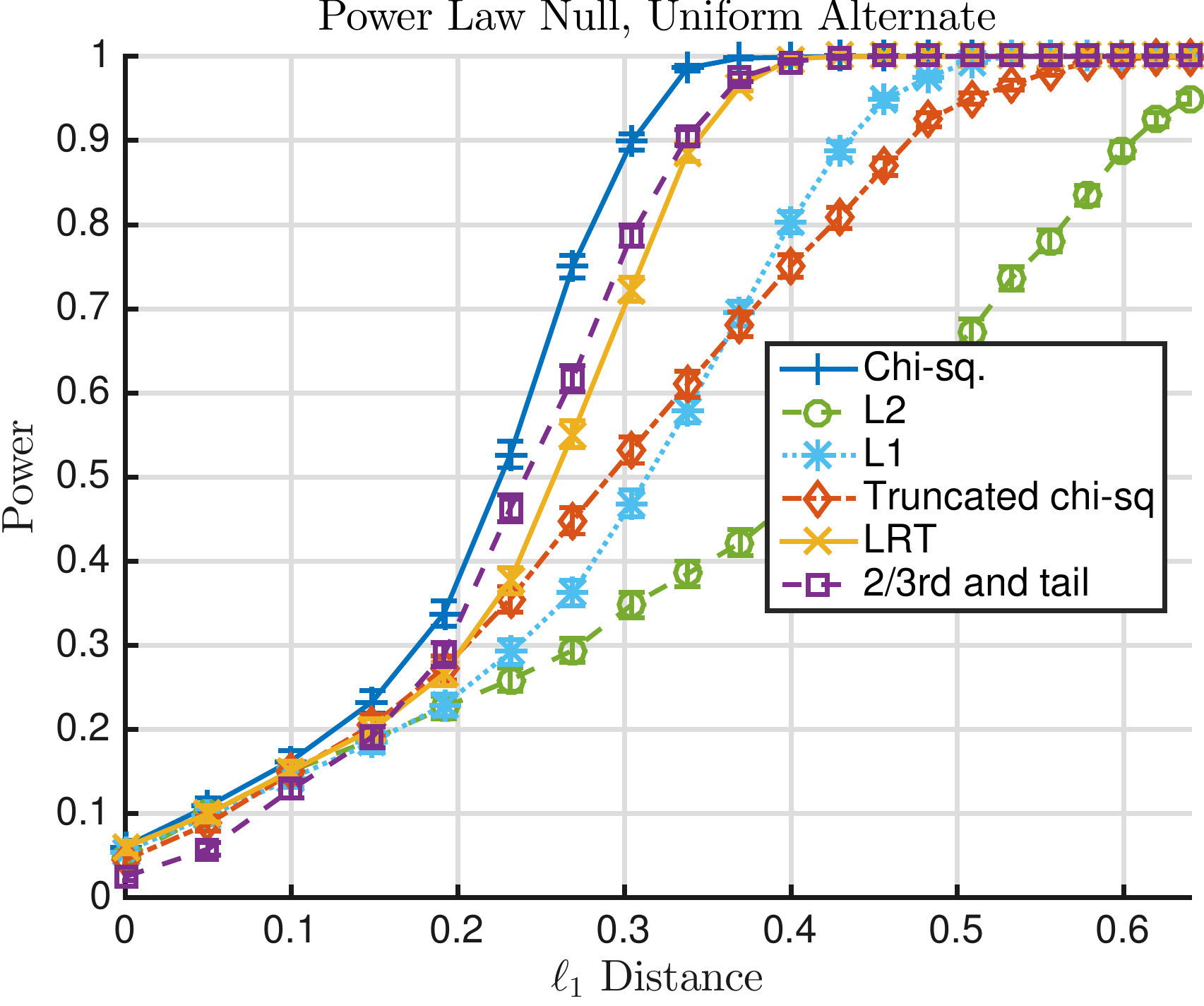} & ~~~~~\includegraphics[scale=0.4]{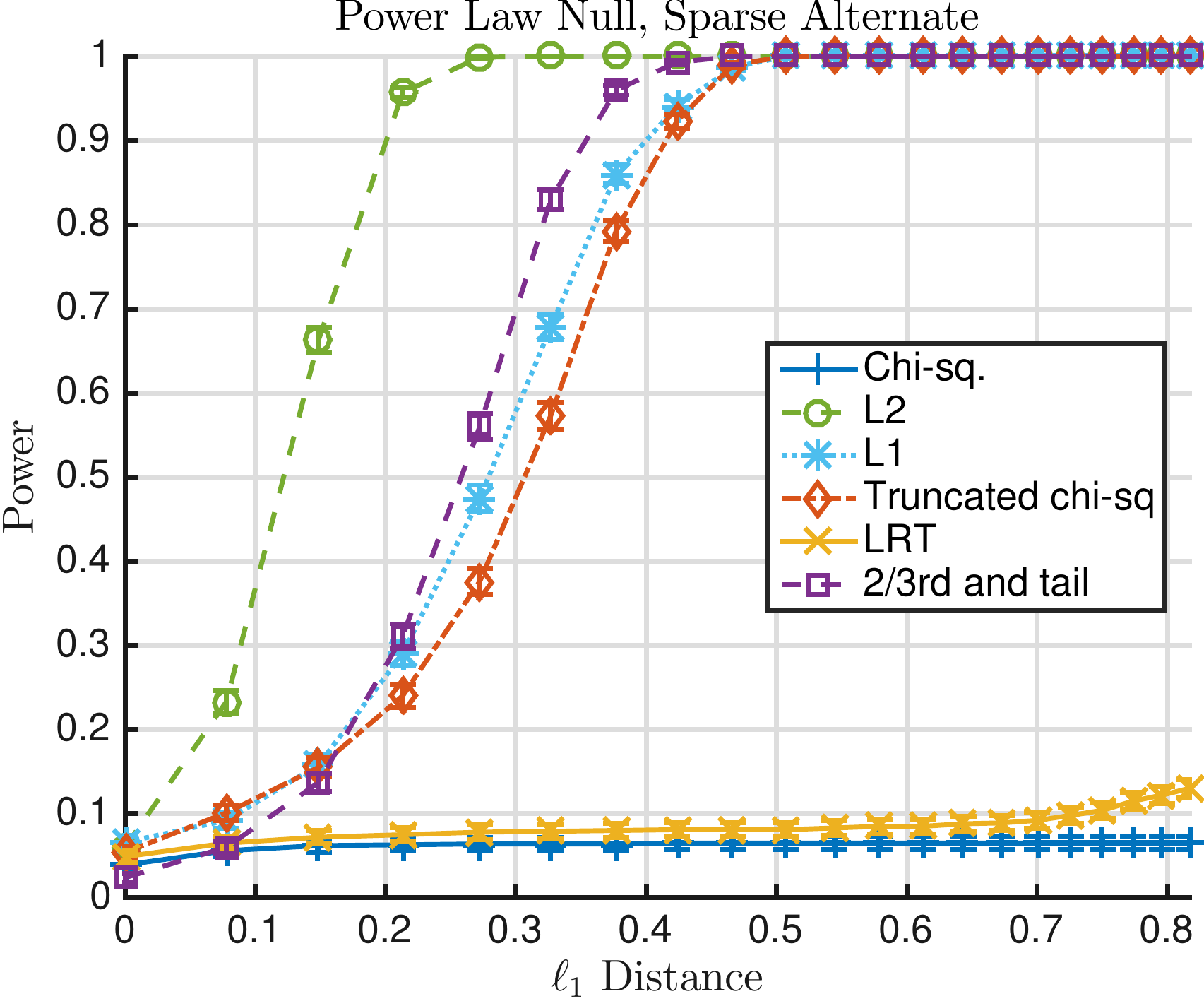} \\
\includegraphics[scale=0.4]{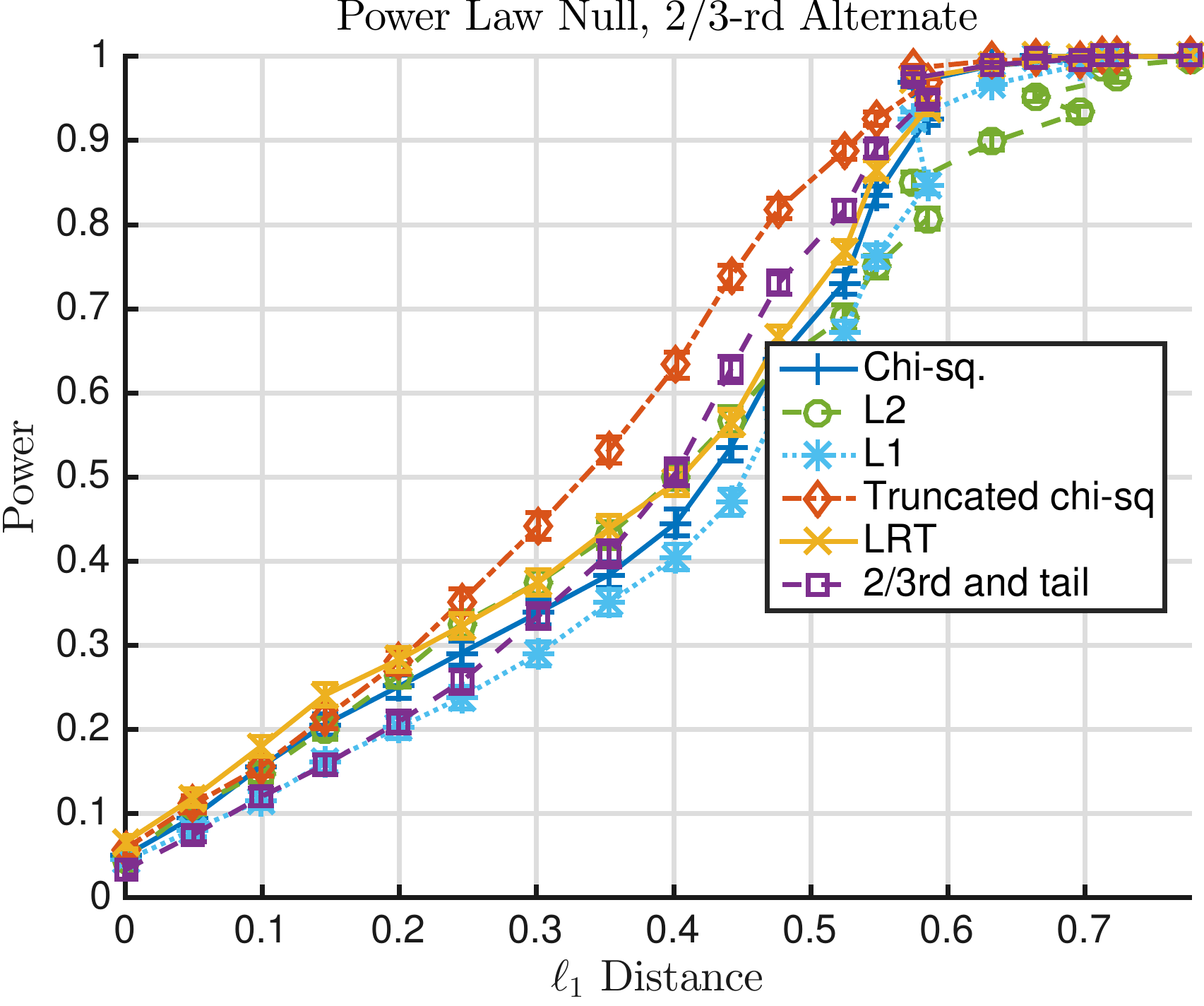} & ~~~~~\includegraphics[scale=0.4]{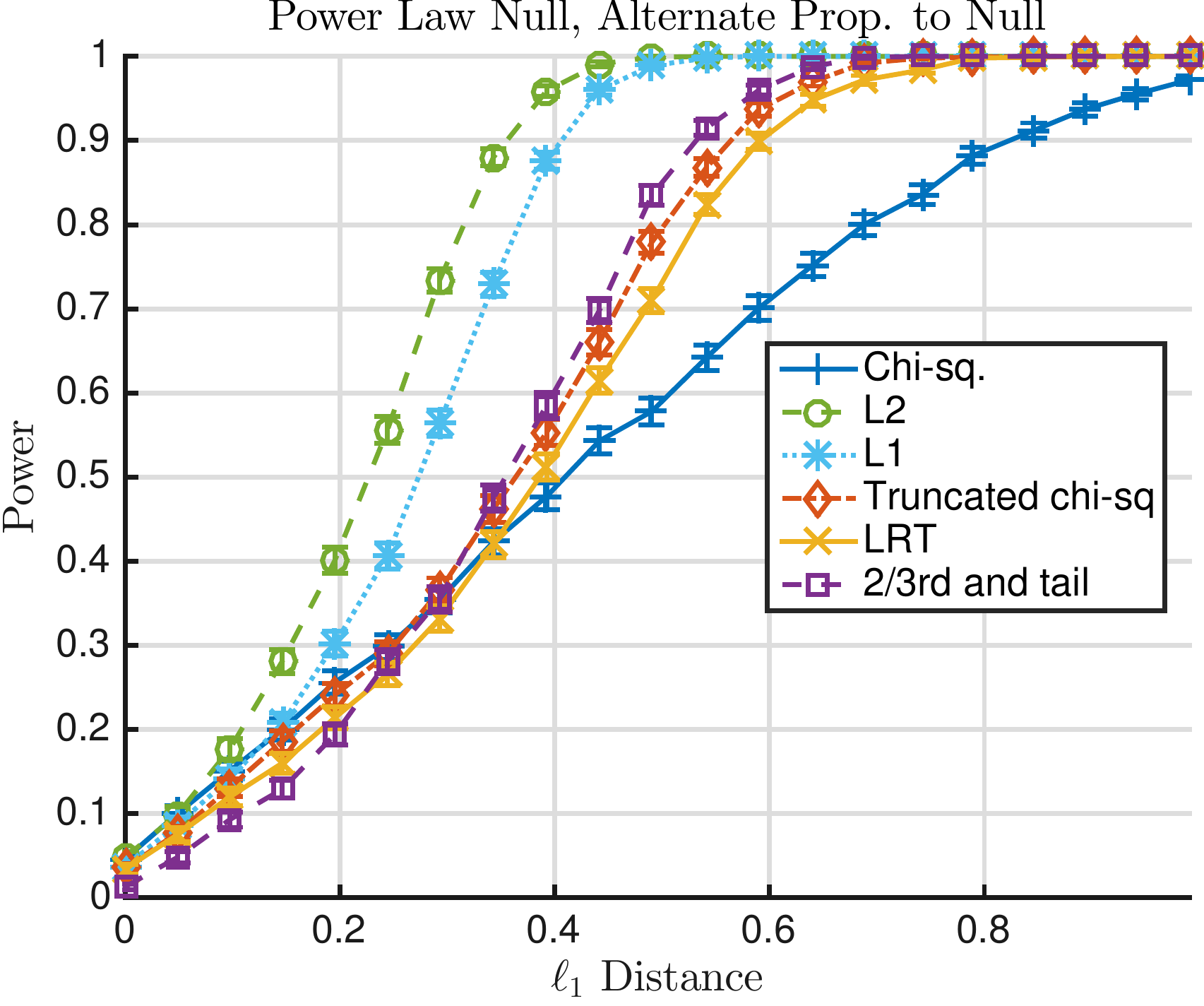} \\
\end{tabular}
\caption{A comparison between the truncated $\chi^2$ test, the 2/3rd + tail test \cite{valiant14}, 
the $\chi^2$-test, the likelihood ratio test, the $\ell_1$ test and the $\ell_2$ test. 
The null is chosen to be a power law with $p_0(i) \propto 1/i$.
We consider four possible alternates, the first uniformly perturbs the coordinates, the second
is a sparse perturbation only perturbing the first two coordinates, the third perturbs each co-ordinate proportional to $p_0(i)^{2/3}$ and the final setting perturbs each coordinate proportional to $p_0(i)$. The power of the tests are plotted against the $\ell_1$ distance between the null and alternate. Each point in the graph is an average over 1000 trials. }
\label{fig:powerapp}
\end{figure}
\end{center}

\clearpage

\end{document}